\definecolor{nicegreen}{RGB}{0,180,0}
\newtheorem{thm}{Theorem}[section]
\newtheorem{cor}[thm]{Corollary}
\newtheorem{lem}[thm]{Lemma}
\newtheorem{propn}[thm]{Proposition}
\newtheorem*{nothm}{Theorem}
\newtheorem*{nocor}{Corollary}
\theoremstyle{remark}
\newtheorem*{remark}{Remark}
\theoremstyle{definition}
\newtheorem{defn}[thm]{Definition}
\newtheorem*{defn1}{Definition}
\newcommand{\qp}{\mathbb{Q}_p}
\newcommand{\fp}{\mathbb{F}_p}
\newcommand{\fpb}{\ol{\mathbb{F}}_p}
\newcommand{\zp}{\mathbb{Z}_p}
\newcommand{\pp}{\mathfrak{p}}
\newcommand{\oo}{\mathfrak{o}}
\newcommand{\T}{\textnormal{T}}
\newcommand{\s}{\textnormal{S}}
\newcommand{\gal}{\mathcal{G}}
\newcommand{\ii}{\mathcal{I}}
\newcommand{\aff}{\textnormal{aff}}
\newcommand{\mm}{\mathfrak{m}}
\newcommand{\mmm}{\widetilde{\mathfrak{m}}}
\newcommand{\MM}{\mathfrak{M}}
\newcommand*{\longhookrightarrow}{\ensuremath{\lhook\joinrel\relbar\joinrel\rightarrow}}
\newcommand{\sub}[2]{\genfrac{}{}{0pt}{}{#1}{#2}}
\newcommand{\ol}[1]{\overline{#1}}
\newcommand{\cC}{{\mathcal{C}}}
\newcommand{\cH}{{\mathcal{H}}}
\newcommand{\cI}{{\mathcal{I}}}
\newcommand{\cR}{{\mathcal{R}}}
\newcommand{\cS}{{\mathcal{S}}}
\newcommand{\cZ}{{\mathcal{Z}}}
\newcommand{\bG}{{\mathbf{G}}}
\newcommand{\bJ}{{\mathbf{J}}}
\newcommand{\bN}{{\mathbf{N}}}
\newcommand{\bT}{{\mathbf{T}}}
\newcommand{\bZ}{{\mathbf{Z}}}
\newcommand{\bbN}{{\mathbb{N}}}
\newcommand{\bbR}{{\mathbb{R}}}
\newcommand{\bbZ}{{\mathbb{Z}}}
\begin{document}

\title{Pro-$p$-Iwahori invariants for $\textnormal{SL}_2$ and $L$-packets of Hecke modules}
\author{Karol Kozio\l}
\address{Department of Mathematics, University of Toronto, Toronto, ON M5S 2E4, Canada}
\email{karol@math.toronto.edu}

\begin{abstract}
Let $p$ be a prime number, and $F$ a nonarchimedean local field of residual characteristic $p$.  We explore the interaction between the pro-$p$-Iwahori-Hecke algebras of the group $\textnormal{GL}_n(F)$ and its derived subgroup $\textnormal{SL}_n(F)$.  Using the interplay between these two algebras, we deduce two main results.  The first is an equivalence of categories between Hecke modules in characteristic $p$ over the pro-$p$-Iwahori-Hecke algebra of $\textnormal{SL}_2(\mathbb{Q}_p)$ and smooth mod-$p$ representations of $\textnormal{SL}_2(\mathbb{Q}_p)$ generated by their pro-$p$-Iwahori-invariants.  The second is a ``numerical correspondence'' between packets of supersingular Hecke modules in characteristic $p$ over the pro-$p$-Iwahori-Hecke algebra of $\textnormal{SL}_n(F)$, and irreducible, $n$-dimensional projective Galois representations.  
\end{abstract}

\maketitle
\tableofcontents

\section{Introduction}

In recent years, there has been a great deal of interest and activity surrounding the (still nebulous) mod-$p$ version of the Local Langlands Program.  This situation is best understood for the group $\textnormal{GL}_2(\qp)$:  there exists a correspondence between isomorphism classes of semisimple mod-$p$ representations of $\textnormal{Gal}(\ol{\mathbb{Q}}_p/\qp)$ of dimension 2 and (certain) smooth, finite length, semisimple mod-$p$ representations of $\textnormal{GL}_2(\qp)$, due to Breuil (\cite{Br03}).  This correspondence is compatible with a $p$-adic version of the Local Langlands Correspondence, and, even better, both the mod-$p$ and $p$-adic versions are induced by a \emph{functor} (see \cite{Br032}, \cite{Br04}, \cite{Em10}, \cite{Ki09}, \cite{Ki10}, and especially \cite{Co10}, \cite{Pas10}).

Serious difficulties arise when one considers groups other than $\textnormal{GL}_2(\qp)$, however.  For example, Breuil and Pa\v{s}k\={u}nas have shown in \cite{BP12} that, for $F$ a nontrivial unramified extension of $\qp$, there is an infinite family of representations of $\textnormal{GL}_2(F)$ associated to a ``generic'' Galois representation.  Therefore, it is not clear what the ``shape'' of a mod-$p$ correspondence should be for a general reductive group.  Nevertheless, Breuil and Herzig have given a construction of the ``ordinary part'' of such a correspondence, and shown that it appears in certain spaces of mod-$p$ automorphic forms (\cite{BH12}).

An alternative viewpoint for examining these difficulties comes through the study of Hecke modules.  From this point onwards, we let $F$ be a nonarchimedean local field with residue field of size $q$ and characteristic $p$.  Let $G$ denote the group of $F$-rational points of a connected reductive group $\mathbf{G}$, which we assume to be split over $F$.  We let $G_\s$ be the $F$-rational points of the derived subgroup of $\mathbf{G}$, let $I(1)$ denote a pro-$p$-Iwahori subgroup of $G$, and set $I_\s(1) := I(1)\cap G_\s$.  Letting $\bullet$ represent either the empty symbol or ``$\s$,'' we define the \emph{pro-$p$-Iwahori-Hecke algebra} $\cH_\bullet$ as the convolution algebra of compactly supported, $\fpb$-valued functions on the double coset space $I_\bullet(1)\backslash G_\bullet/I_\bullet(1)$.

Taking the $I_\bullet(1)$-invariants of a smooth representation of $G_\bullet$ over $\fpb$ yields a functor
$$\mathcal{I}_\bullet:\mathfrak{Rep}_{\fpb}(G_\bullet)\longrightarrow \mathfrak{Mod}-\cH_\bullet$$
from the category of smooth, mod-$p$ representations of $G_\bullet$ to the category of right $\cH_\bullet$-modules.  When $G = \textnormal{GL}_2(\qp)$, the functor $\cI$ induces an equivalence between the subcategory of $\mathfrak{Rep}_{\fpb}(\textnormal{GL}_2(\qp))$ consisting of representations generated by their $I(1)$-invariants and $\mathfrak{Mod}-\cH$.  Thus one hopes to glean information about the category of representations by examining $\cH$-modules.  This doesn't give an equivalence of categories in general, however (cf. \cite{Oll09}).

One can also ask to what extent the classical Local Langlands Correspondence is reflected by modules over $\cH_\bullet$.  For example, when $G = \textnormal{GL}_n(F)$, results of Vign\'eras and Ollivier (\cite{Vig05} and \cite{Oll10}) show that we have an equality
\begin{equation}\label{glnnum}
\begin{gathered}
\xymatrix{
\#\left\{\txt{\textnormal{simple, supersingular}\\ $\cH$\textnormal{-modules of dimension}~$n$\\ \textnormal{with fixed action}\\ \textnormal{of a uniformizer}}\right\} = \#\left\{\txt{\textnormal{irreducible, mod-}$p$\\ \textnormal{Galois representations}\\ \textnormal{of dimension}~$n$~\textnormal{with fixed}\\ \textnormal{determinant of Frobenius}}\right\},
}
\end{gathered}
\end{equation}
where we consider all objects up to isomorphism, and where a supersingular module is an object intended to mirror a supercuspidal representation of $G$.  By recent work of Gro\ss e-Kl\"onne (\cite{GK13}), we now know that this numerical bijection is induced by a functor, at least in the case when $F = \qp$.

The goal of the present article is to analyze the situations of the previous two paragraphs for the group $\textnormal{SL}_n(F)$.  We begin with a general split $\bG$, and recall in Section \ref{algs} the presentations of the algebras $\cH$ and $\cH_\s$, due to Vign\'{e}ras.  Proposition \ref{embedding} shows that we have an injection $\cH_\s\longhookrightarrow \cH$, making $\cH$ into a free $\cH_\s$-module.  We then use a descent argument to deduce when certain universal modules corresponding to $\cH$ and $\cH_\s$ are flat (resp. faithfully flat, resp. projective) (Corollaries \ref{cflatiffcsflat} and \ref{flatcor}).  

From Section \ref{equivs} onwards, we assume $G = \textnormal{GL}_n(F)$ and $G_\s = \textnormal{SL}_n(F)$.  In Section \ref{equivs}, we examine more closely the functor $\mathcal{I}_\bullet$.  In order to accurately speak of an equivalence of categories, we must consider the full subcategory of $\mathfrak{Rep}_{\fpb}(G_\bullet)$ consisting of representations generated by their space of $I_\bullet(1)$-invariant vectors, which we denote $\mathfrak{Rep}_{\fpb}^{I_\bullet(1)}(G_\bullet)$.  We continue to denote by $\mathcal{I}_\bullet$ the functor of invariants restricted to this subcategory.  Our main result in this section is the following:

\begin{nothm}[Theorem \ref{equiv}]
 Assume $(n,p) = 1$.  Then the functor $\mathcal{I}$ induces an equivalence of categories between $\mathfrak{Rep}_{\fpb}^{I(1)}(G)$ and $\mathfrak{Mod}-\cH$ if and only if the functor $\mathcal{I}_\s$ induces an equivalence of categories between $\mathfrak{Rep}_{\fpb}^{I_\s(1)}(G_\s)$ and $\mathfrak{Mod}-\cH_\s$.  
\end{nothm}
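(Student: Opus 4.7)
The plan is to reformulate each of the two equivalences of categories as a flatness-type condition on the corresponding universal module, and then to invoke the descent results of Section \ref{algs} to transfer the condition between $\cH$ and $\cH_\s$.

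First, I would set up the standard adjunction. The functor $\mathcal{I}_\bullet$ admits a left adjoint
$$\T_\bullet \colon \mathfrak{Mod}-\cH_\bullet \longrightarrow \mathfrak{Rep}_{\fpb}(G_\bullet), \qquad M \longmapsto M \otimes_{\cH_\bullet} \fpb[I_\bullet(1)\backslash G_\bullet],$$
whose essential image lies in $\mathfrak{Rep}_{\fpb}^{I_\bullet(1)}(G_\bullet)$ and whose counit $\T_\bullet \mathcal{I}_\bullet V \twoheadrightarrow V$ is surjective by the very definition of the subcategory. A standard formal argument reduces the equivalence statement to the assertion that the unit map $M \to \mathcal{I}_\bullet \T_\bullet(M)$ is an isomorphism for every $\cH_\bullet$-module $M$ (injectivity of the counit on the subcategory follows once the unit is known to be iso, by a triangle-identity argument combined with exactness). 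Unwinding this, the unit-isomorphism condition translates into a statement about $\fpb[I_\bullet(1)\backslash G_\bullet]$ viewed as a right $\cH_\bullet$-module, namely that it is faithfully flat in a way compatible with the $I_\bullet(1)$-invariants functor\textemdash precisely the sort of structural condition addressed by Corollaries \ref{cflatiffcsflat} and \ref{flatcor}.

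Next I would use the assumption $(n,p) = 1$ to compare the two universal modules. Under this assumption, the center of $G_\s$, which is $\mu_n(F)$, has order coprime to $p$, so its group algebra over $\fpb$ is semisimple, and the component group $G/ZG_\s$ (a subquotient of $F^\times/(F^\times)^n$) behaves tamely in characteristic $p$. This allows one to write $\fpb[I(1)\backslash G]$, regarded as a right $\cH_\s$-module via the embedding $\cH_\s \hookrightarrow \cH$ of Proposition \ref{embedding}, as an explicit direct sum of conjugate-twisted copies of $\fpb[I_\s(1)\backslash G_\s]$ indexed by representatives for $G/ZG_\s$. The coprimality hypothesis ensures that this decomposition is compatible with taking $I(1)$- and $I_\s(1)$-invariants, so that the unit-isomorphism criterion on one side transfers into the same criterion on the other. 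Combined with Corollaries \ref{cflatiffcsflat} and \ref{flatcor}, which already identify the flatness portion of the condition on both sides, this yields the desired biconditional.

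The step I expect to be the main obstacle is the bookkeeping in the second paragraph: one must check carefully that the restriction functor from $G$-representations to $G_\s$-representations sends $\mathfrak{Rep}_{\fpb}^{I(1)}(G)$ into $\mathfrak{Rep}_{\fpb}^{I_\s(1)}(G_\s)$, that an appropriate inverse construction (involving inducing from $G_\s$ up to $ZG_\s$ by extending via characters of $Z$, and then up to $G$) sends the second subcategory back into the first, and that both operations intertwine with $\mathcal{I}_\bullet$ and $\T_\bullet$. The hypothesis $(n,p) = 1$ enters precisely at each of these compatibility checks, as it is what makes the relevant finite-group averaging arguments over $\mu_n$ and $G/ZG_\s$ behave exactly in characteristic $p$.
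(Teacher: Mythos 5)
Your proposal takes a genuinely different route from the paper, and unfortunately the central reduction it relies on is not valid.

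The main gap is in your first paragraph: you assert that the unit-isomorphism condition ``translates into'' faithful flatness of $\cC_\bullet$ over $\cH_\bullet$, and then invoke Corollaries \ref{cflatiffcsflat} and \ref{flatcor} to transfer that condition. But flatness of $\cC_\bullet$ is only a \emph{necessary} consequence of $\mathcal{I}_\bullet$ being an equivalence (an equivalence forces $\mathcal{T}_\bullet = -\otimes_{\cH_\bullet}\cC_\bullet$ to be exact), not a characterization of it. The unit $M\to (M\otimes_{\cH_\bullet}\cC_\bullet)^{I_\bullet(1)}$ being an isomorphism for all $M$ is a strictly stronger statement than flatness, and Corollaries \ref{cflatiffcsflat}/\ref{flatcor} say nothing about it. Indeed, the paper proves Theorem \ref{equiv} \emph{without} any appeal to those corollaries; they enter the picture only afterwards, in Corollary \ref{equivcor}(5), to rule out an equivalence in a specific case precisely because flatness is the necessary condition. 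So the logical bridge in your proof is missing.

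The paper's actual mechanism transfers the unit-isomorphism condition directly at the level of Hecke modules, using three ingredients. First, under $(n,p)=1$ the map $x\mapsto x^n$ is an automorphism of $1+\pp$, which splits the determinant sequence and gives $I(1)\cong I_\s(1)\times(I(1)\cap Z)$; from this, for any $\pi$ generated by its $I(1)$-invariants one gets $\pi^{I(1)}=\pi^{I_\s(1)}$ as $\cH_\s$-modules (Lemma \ref{I1Is1invts}). This is how coprimality enters---it is a structural splitting of the pro-$p$-Iwahori, not an averaging argument over $\mu_n$ or Clifford theory for $G/ZG_\s$ as you suggest. Second, $\cH$ is free over $\cH_\s$ with $\cH_\s$ a direct summand (Proposition \ref{freeness}, Corollary \ref{dirsumdecomp}). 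Third, $\MM|_{\cH_\s}\otimes_{\cH_\s}\cC_\s\cong\MM\otimes_\cH\cC|_{G_\s}$ (Corollary \ref{easyisom}). In the forward direction one replaces the $\cH_\s$-module $\mm$ by its Hecke-theoretic induction $\mm\otimes_{\cH_\s}\cH$, applies the hypothesis, restricts back to $\cH_\s$, and reads off the isomorphism for $\mm$ from the direct summand decomposition; in the backward direction one restricts the unit map for an $\cH$-module to $\cH_\s$ and applies the hypothesis. The counit is then handled by a short exact sequence argument using faithfulness of $\mathcal{I}_\bullet$, as you correctly anticipated. Nowhere does the paper induce or restrict \emph{representations} of $G$ and $G_\s$; all the transfer happens on the Hecke side, which is what makes the bookkeeping you were worried about in your last paragraph unnecessary.
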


Using results of Ollivier (\cite{Oll09}) on the functor of $I(1)$-invariants for $\textnormal{GL}_2(F)$ (and a slight extension of Theorem \ref{equiv}), we obtain the following corollary.

\begin{nocor}[Corollary \ref{equivcor}]
 The functor $\mathcal{I}_\s$ induces an equivalence between $\mathfrak{Rep}_{\fpb}^{I_\s(1)}(\textnormal{SL}_2(\qp))$ and $\mathfrak{Mod}-\cH_\s$ when $p>2$.  
\end{nocor}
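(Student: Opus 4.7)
The plan is to combine Theorem \ref{equiv}, specialized to $n=2$ and $F=\qp$, with Ollivier's equivalence for the $I(1)$-invariants functor on $\textnormal{GL}_2(\qp)$. The hypothesis $(n,p)=1$ appearing in Theorem \ref{equiv} becomes $(2,p)=1$ for $n=2$, which is exactly the condition $p>2$ stated in the corollary.

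First I would invoke the main equivalence from \cite{Oll09}: for $F=\qp$, the functor $\mathcal{I}$ induces an equivalence of categories between $\mathfrak{Rep}_{\fpb}^{I(1)}(\textnormal{GL}_2(\qp))$ and $\mathfrak{Mod}-\cH$. This furnishes the ``$\textnormal{GL}_2(\qp)$ side'' of the biconditional in Theorem \ref{equiv}. Feeding this into the direction ``equivalence for $G$ implies equivalence for $G_\s$'' of Theorem \ref{equiv} then yields the desired equivalence between $\mathfrak{Rep}_{\fpb}^{I_\s(1)}(\textnormal{SL}_2(\qp))$ and $\mathfrak{Mod}-\cH_\s$.

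The one subtle point --- and the reason the introduction refers to ``a slight extension of Theorem \ref{equiv}'' --- is matching the precise form of Ollivier's statement with the hypotheses of Theorem \ref{equiv}. Ollivier's equivalence is most naturally phrased after decomposing by the action of the center (equivalently, after fixing a character of the central subalgebra of $\cH$ generated by the action of a uniformizer), whereas Theorem \ref{equiv} was proved for the full categories of smooth representations and Hecke modules. I expect this to be the only obstacle: one must verify that the descent between $\cH$ and $\cH_\s$ underlying Theorem \ref{equiv} is compatible with a central-character decomposition, so that Ollivier's equivalence, assembled across central characters, still witnesses the ``$\textnormal{GL}_2$ side'' in the sense required by Theorem \ref{equiv}.

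Once this compatibility is checked, the corollary follows formally from assembling the two equivalences, with no further computation required. The restriction to $\qp$ (rather than a general $F$ of residual characteristic $p$) enters solely through Ollivier's input, since his equivalence is known to fail for $\textnormal{GL}_2(F)$ when $F\neq\qp$; the passage from $\textnormal{GL}_2$ to $\textnormal{SL}_2$ provided by Theorem \ref{equiv} is where the hypothesis $p>2$ is used.
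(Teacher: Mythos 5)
Your proposal follows exactly the paper's approach: the paper's proof of Corollary \ref{equivcor} is a one-line citation of Theorem \ref{equiv} together with Th\'eor\`eme 1.3 of \cite{Oll09}, and you have correctly identified both ingredients, the forward direction of the biconditional to be used, and the role of the hypothesis $(n,q)=1$, which for $n=2$ and $F=\qp$ becomes $p>2$. Your diagnosis of the ``slight extension'' is also on target: the body statement of Theorem \ref{equiv} augments the version quoted in the introduction with a third equivalent condition, namely the equivalence $\underline{\mathcal{I}}:\mathfrak{Rep}_{\fpb}^{I(1)}(G)_{\varpi=1}\to\mathfrak{Mod}-\underline{\cH}$, which is precisely the fixed-central-action formulation that matches how Ollivier's result is phrased; so the compatibility you flag is already built into the stated theorem rather than needing to be checked separately. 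The only minor quibble is your last sentence attributing the hypothesis $p>2$ solely to the $\textnormal{GL}_2\rightsquigarrow\textnormal{SL}_2$ passage; the paper's part (1) also states the $\textnormal{GL}_2(\qp)$ equivalence under $p>2$, so the input from \cite{Oll09} is being invoked under that assumption as well.
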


As a second application of the interaction between the algebras $\cH$ and $\cH_\s$, we investigate the supersingular modules for $\cH_\s$ in Section \ref{packets}.  The precise notion of supersingularity may be found in \cite{Vig05}, and a complete description of such modules may be found in \cite{Oll12}.  There is a natural conjugation action on $\cH$ by the multiplicative subgroup of elements of length $0$, which preserves the subspace $\cH_\s$.  Therefore, given any supersingular character $\chi$ of $\cH_\s$ and an element $\T_\omega$ of $\cH$ of length $0$, we may define a new character $\T_\omega\cdot\chi$, given on $\cH_\s$ by first conjugating the argument by $\T_\omega$ and then applying $\chi$.  Mimicking the classical case of complex representations of $G_\s$, we make the following definition:

\begin{defn1}
 An \emph{L-packet of supersingular $\cH_\s$-modules} is an orbit of the subgroup of $\cH$ of length $0$ elements on the set of supersingular characters of $\cH_\s$.  
\end{defn1}

In order to proceed further, we impose an additional ``regularity'' condition on $L$-packets.  Given this, we are able to count the number of regular, supersingular $L$-packets of size $d$, for $d$ a divisor of $n$ (Corollary \ref{numorbits}).  Our next goal is to relate the $L$-packets thus constructed to projective, mod-$p$ Galois representations, which we take up in Section \ref{galois}.  Our main theorem is as follows.  

\begin{nothm}[Corollaries \ref{numprojreps} and \ref{finalcor}]
Let $d$ be a divisor of $n$.  The number of regular supersingular $L$-packets of $\cH_\s$-modules of size $d$ is equal to the number of isomorphism classes of irreducible projective Galois representations 
$$\sigma:\textnormal{Gal}(\overline{F}/F)\longrightarrow \textnormal{PGL}_n(\fpb)$$
having exactly $d\frac{q - 1}{n}$ isomorphism classes of lifts to genuine Galois representations.  This number is equal to 
$$h(d) = \frac{1}{d}\sum_{e|d}\mu\left(\frac{d}{e}\right)g(e),$$
where $\mu$ denotes the M\"obius function, and 
$$g(e) = \sum_{f|n} \mu\left(\frac{n}{f}\right)\left(\frac{f}{(e,f)}, q - 1\right)\frac{q^{(e,f)} - 1}{q - 1}.$$
In particular, the number of regular supersingular $L$-packets of $\cH_\s$-modules is equal to the number of isomorphism classes of irreducible projective Galois representations of dimension $n$.  
\end{nothm}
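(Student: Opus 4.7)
The plan is to count isomorphism classes of irreducible projective Galois representations with exactly $d(q-1)/n$ lifts and match the result against the Hecke-side count $h(d)$ of regular supersingular $L$-packets of size $d$, which is already provided by Corollary~\ref{numorbits}. Thus the argument reduces to performing the Galois-side count directly, guided by the numerical bijection (\ref{glnnum}) for $\textnormal{GL}_n(F)$.

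I would start by classifying lifts via the tame reduction. Since any pro-$p$ subgroup of $\textnormal{GL}_n(\fpb)$ is unipotent and the wild inertia $P_F$ is pro-$p$, every irreducible continuous $\rho:\textnormal{Gal}(\overline{F}/F)\to\textnormal{GL}_n(\fpb)$ factors through the tame quotient, and the standard classification gives $\rho\cong\textnormal{Ind}_{\textnormal{Gal}(\overline{F}/F_n)}^{\textnormal{Gal}(\overline{F}/F)}\chi$, where $F_n/F$ is unramified of degree $n$ and $\chi$ is a regular tame character of $\textnormal{Gal}(\overline{F}/F_n)$. Via local class field theory I would parametrize $\chi$ by a pair $(\zeta,\xi)\in\fpb^\times\times\widehat{k_n^\times}$ (with $k_n$ the residue field of $F_n$); regularity means $\xi$ has trivial Frobenius stabilizer. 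Fixing the determinant of Frobenius of the lift (the Galois analogue of fixing the action of a uniformizer on the Hecke side of (\ref{glnnum})) pins down~$\zeta$, rendering the set of lifts of a fixed projective representation finite.

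Next, since any two lifts of a projective $\sigma$ differ by a twist and $\textnormal{Ind}(\chi)\otimes\psi=\textnormal{Ind}(\chi\cdot(\psi\circ N_{F_n/F}))$, I would compute the twist stabilizer of $\rho$. Writing $\psi$ via parameters $(\alpha,\beta)\in\fpb^\times\times\widehat{k^\times}$, the norm pullback is $(\alpha^n,\beta\circ N_{k_n/k})$, and the condition $\rho\otimes\psi\cong\rho$ decomposes into $\alpha^n=1$ together with the existence of some $i\in\bbZ/n$ satisfying $\xi^{\phi^i-1}=\beta\circ N_{k_n/k}$. The latter forces $\xi^{\phi^i-1}$ to be a ``norm character'' (order dividing $q-1$), and the set $S(\xi)$ of such $i$ forms a subgroup of $\bbZ/n$ of index equal to some divisor $d\mid n$. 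Accounting for the kernel $\mu_n(\fpb)$ of $\alpha\mapsto\alpha^n$ and dividing the order of the group of determinant-preserving twists by the size of the stabilizer yields exactly $d(q-1)/n$ isomorphism classes of lifts.

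Finally, a M\"obius inversion on the divisor lattice of $n$ extracts the number of Frobenius orbits of regular $\xi\in\widehat{k_n^\times}$ with $|S(\xi)|=n/d$, which, after a direct comparison of exponents and gcds, recovers the formula $h(d)=\tfrac{1}{d}\sum_{e\mid d}\mu(d/e)g(e)$ from Corollary~\ref{numorbits}; summing over $d\mid n$ gives the final sentence. The main obstacle will be the central step: one must verify that the divisor $d$ extracted from $S(\xi)\subseteq\bbZ/n$ on the Galois side coincides with the divisor indexing $L$-packet size on the Hecke side, i.e., that the twist action by $\psi$ on $\rho$ corresponds under (\ref{glnnum}) to the conjugation action of $\Omega(1)\subset\cH$ on supersingular characters of $\cH_\s$. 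This matching is not formal from (\ref{glnnum}) alone and will require carefully tracking the central characters on both sides via the parameters $(\zeta,\xi)$ and the length-$0$ subgroup structure of~$\cH$.
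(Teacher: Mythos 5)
Your plan is essentially the same as the paper's: classify irreducible mod-$p$ Galois representations as inductions of tame characters from $\gal_{F_n}$, pin down the unramified ambiguity, identify the subgroup $S(\xi)\subseteq\bbZ/n$ controlling the twist stabilizer (the paper's $\cS_\sigma$ in Lemma \ref{rtoprojrep}), conclude that the number of lifts is $d(q-1)/n$ where $d$ is the index of $S(\xi)$, and run a M\"obius inversion to recover the formula $h(d)$, which is then compared against Corollary \ref{numorbits}. Your parametrization of tame characters by $\xi\in\widehat{k_n^\times}$ via local class field theory is just a change of notation for the paper's residues $r\in\bbZ/(q^n-1)\bbZ$ through the fundamental character $\omega_n$, and your $S(\xi)$ matches $\cS_\sigma=\{e:q^er\equiv r~(\mathrm{mod}~[n])\}$ exactly.

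The one genuinely different choice is your normalization of ``lift'': you propose fixing the determinant of Frobenius, whereas the paper's Definition restricts lifts to representations of the exact form $\textnormal{ind}_{\gal_{F_n}}^{\gal_F}(\omega_n^r)$, i.e.\ with trivial unramified part (justified by Lemma \ref{lambda=1}). Your normalization is workable but more delicate: the group of determinant-preserving twists involves both $\mu_n(\fpb^\times)$ (whose order is only the prime-to-$p$ part of $n$) and the tame twists $\omega_1^m$ with $mn\equiv0~(\mathrm{mod}~q-1)$, and you would have to check carefully that, after dividing by the twist stabilizer, this nevertheless returns $d(q-1)/n$. The paper's choice collapses this bookkeeping to a single orbit computation on $\{r+m[n]\}_{0\le m<q-1}$.

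Your final paragraph worries about a ``central step'' that is in fact not needed. The statement is purely numerical: the paper establishes that the Hecke-side count of regular supersingular $L$-packets of size $d$ is $h(d)$ (Corollary \ref{numorbits}), establishes independently that the Galois-side count of irreducible projective representations with exactly $d(q-1)/n$ lifts is $h(d)$ (Proposition \ref{rprim}, Corollary \ref{cormind}, Lemma \ref{rtoprojrep}, Corollary \ref{numprojreps}), and then simply observes the two formulas coincide. No correspondence between the twist action on the Galois side and the $\Omega(1)$-conjugation on the Hecke side is required here; that structural matching is the content of the entirely separate Corollary \ref{Wbij}, proved using Gro\ss e-Kl\"onne's functor. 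Since you yourself note that the Galois-side M\"obius inversion ``recovers the formula $h(d)$,'' your computation already completes the proof, and the ``main obstacle'' you identify is illusory.
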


Some remarks are in order.  Firstly, we note the function $g(e)$ can be computed quite explicitly in terms of Euler's phi function.  With some work, one can show that $h(d)\neq 0$ implies $d\frac{q - 1}{n}\in \mathbb{Z}$ (Lemma \ref{g(d)neq0}), so that the statement of the above theorem makes sense.  Secondly, when $n = 2$ and $F = \qp$, we recover the bijection contained in work of Abdellatif (\cite{Ab11}).

Finally, let us remark that the work of \cite{GK13} shows how to construct a functor from the category of finite length modules over the pro-$p$-Iwahori-Hecke algebra of a general connected reductive group $\bG$, defined and split over $\qp$, to the category of \'etale $(\varphi^r, \Gamma_0)$-modules.  When $\bG = \mathbf{GL}_n$, this functor (along with Fontaine's equivalence of categories) induces the numerical correspondence \eqref{glnnum}.  We compute this functor for supersingular $L$-packets of $\cH_\s$-modules in Subsection \ref{gk}, and show how to define a map from the set of regular supersingular $L$-packets to the set of irreducible, projective mod-$p$ Galois representations.  Moreover, we show in Corollary \ref{Wbij} that this map realizes the numerical correspondence of Corollary \ref{finalcor}.  

\noindent\textbf{Acknowledgements.} I would like to thank Rachel Ollivier for her support and guidance throughout the course of working on this article, and for many extremely useful comments.  Several parts of this paper were written during the conference ``Modular Representation Theory of Finite and $p$-adic Groups'' at the Institute for Mathematical Sciences, National University of Singapore, and I would like to thank the institution for its support.  During the preparation of this article, support was also provided by NSF Grant DMS-0739400.

\section{Notation}\label{notation}

Fix a prime number $p$, and let $F$ be a nonarchimedean local field of residual characteristic $p$.  Denote by $\oo$ its ring of integers, and by $\pp$ the unique maximal ideal of $\oo$.  Fix a uniformizer $\varpi$ and let $k = \oo/\pp$ denote the (finite) residue field.  The field $k$ is a finite extension of $\mathbb{F}_p$ of size $q$.  We fix also a separable closure $\overline{F}$ of $F$, and let $k_{\ol{F}}$ denote its residue field.  Let $\iota:k_{\ol{F}}\stackrel{\sim}{\longrightarrow} \fpb$ denote a fixed isomorphism, and assume that every $\fpb^\times$-valued character factors through $\iota$.  

Let $\bG$ denote a connected, reductive group, with derived subgroup $\bG_\s$.  We assume that $\bG$ is split over $F$.  For any algebraic subgroup $\bJ$ of $\bG$, we denote by $\bJ_\s$ its intersection with $\bG_\s$.  We let $\bT$ denote a fixed split maximal torus of $\bG$, so that $\bT_\s$ is a maximal torus of $\bG_\s$.  %Note that, Corollary A.2.7 of Conrad--Gabber--Prasad implies that $\bT\cap \bG_\s$ is connected, hence is a maximal torus of $\bG_\s$.  
Let $\bZ$ denote the connected center of $\bG$; note that $\bZ_\s$ is not necessarily connected.  Let $\bN$ denote the normalizer of $\bT$ in $\bG$, so that $\bN_\s$ is the normalizer of $\bT_\s$ in $\bG_\s$.  We will generally denote algebraic groups by boldface letters, and their groups of $F$-rational points by the corresponding italicized Roman letter (e.g., $G = \bG(F), T = \bT(F)$, etc.).  

\section{Weyl Groups}

In order to ease notation in the following discussion (and throughout the remainder of the article), we let $\bullet$ denote either the empty symbol or $\s$.  For an algebraic group $\bJ$, we let $X^*(\bJ)$ (resp. $X_*(\bJ)$) denote the group of algebraic characters (resp. cocharacters) of $\bJ$.  We let $\Phi_\bullet\subset X^*(\bT_\bullet)$ denote the set of roots of $\bT_\bullet$ acting on $\textnormal{Lie}(\bG_\bullet)$ by conjugation.  Restriction to $\bT_\s$ gives a bijection between $\Phi$ and $\Phi_\s$ (\cite{Bo91}, Section 21.1).

Let $A_\bullet := (X_*(\bT_\bullet)/X_*(\bZ_\bullet^\circ))\otimes_\bbZ \bbR$ be the standard apartment corresponding to $\bT_\bullet$ in the (adjoint) Bruhat--Tits building $X_\bullet$ of $G_\bullet$ (see \cite{SS97} for an overview).   Since $X_*(\bT_\s)$ is of finite index in $X_*(\bT)/X_*(\bZ)$, the apartment $A_\s$ identifies canonically with $A$.  We fix a hyperspecial point $x_0\in A_\s$ and a chamber $C\subset A_\s$ containing $x_0$ (we view both in either $A_\s$ or $A$).  Since $x_0$ is hyperspecial, the set of roots $\Phi_\bullet$ identifies with the subset of affine roots which are zero on $x_0$, and we let $\Phi_\bullet^+$ denote the set of those roots which are positive on $C$ (see Section 1.9 of \cite{Ti79}).  As above, restriction to $\bT_\s$ gives a bijection between $\Phi^+$ and $\Phi_\s^+$.

Let $I_\bullet$ denote the Iwahori subgroup in $G_\bullet$ corresponding to $C$, and $I_\bullet(1)$ its pro-$p$ radical.  We have $I\cap G_\s = I_\s$ and $I(1)\cap G_\s = I_\s(1)$.  %See, for example, Haines--Rapoport.  This requires a bit of comment.  By construction of the Borovoi fundamental group (see, e.g., Borovoi's paper "Abelian Galois Cohomology..."), we easily get that $X^*(Z(\widehat{\bG_\s})) = \pi_1(\bG_\s)\longhookrightarrow \pi_1(\bG) = X^*(Z(\widehat{\bG}))$.  Let $\imath:\bG_\s\longhookrightarrow\bG$ denote the inclusion.  By functoriality of the Kottwitz homomorphism and injectivity, we get that $\ker(\kappa_{G})\cap G_\s = \ker(\kappa_{G}\circ\imath) = \ker(\pi_1(\imath)\circ\kappa_{G_\s}) = \ker(\kappa_{G_\s})$, which is enough to give the claim.
The group $N_\bullet$ acts on $A_\bullet$ by affine transformations; the group $T_\bullet\cap I_\bullet$ acts trivially.  Moreover, the group $(T_\bullet\cap I_\bullet)/(T_\bullet\cap I_\bullet(1))$ identifies with the group of $k$-points of a torus, which we denote $T_\bullet(k)$.  The Iwahori decomposition implies $I_\bullet = T_\bullet(k)\ltimes I_\bullet(1)$ (cf. \emph{loc. cit.}, Section 3.7).

We define the following (Iwahori--)Weyl groups:
\begin{eqnarray*}
W_{0,\bullet} & := & N_\bullet/T_\bullet\\
W_\bullet & := & N_\bullet/(T_\bullet \cap I_\bullet)\\
W_\bullet(1) & := & N_\bullet/(T_\bullet \cap I_\bullet(1))
\end{eqnarray*}
Theorem 21.2 in \cite{Bo91} implies $W_{0,\bullet} \cong \bN_\bullet/\bT_\bullet$, and by the discussion in Section 21.1 of \emph{loc. cit.} we have $W_0 \cong W_{0,\s}$.   Section 3.3 of \cite{Ti79} shows that we have the following Bruhat decomposition:
$$G_\bullet = \bigsqcup_{w\in W_\bullet} I_\bullet wI_\bullet.$$
Here $I_\bullet wI_\bullet$ denotes the double coset $I_\bullet n_w I_\bullet$ for any lift $n_w$ in $N_\bullet$ of $w$.  Using this, one easily obtains the following double coset decomposition (\cite{Vig05}, Theorem 6):
\begin{equation}\label{bruhat}
G_\bullet = \bigsqcup_{w\in W_\bullet(1)} I_\bullet(1) wI_\bullet(1).
\end{equation}

The affine Weyl group $W_{\aff,\bullet}$ is defined as the subgroup of $W_\bullet$ generated by the reflections in the hyperplanes corresponding to the affine roots of $\bT_\bullet$.  We let $S_\bullet$ denote the set of reflections in the hyperplanes containing a facet of $C$; the pair $(W_{\aff,\bullet}, S_\bullet)$ is then a Coxeter system (Th\'eor\`eme 1 of \cite{Bo81} V \S 3.2).  We denote by $\ell:W_{\aff,\bullet}\longrightarrow \bbN$ the length function on $W_{\aff,\bullet}$ with respect to $S_\bullet$.  Sections 1.4 and 1.5 of \cite{Lu89} imply that the length function inflates to $W_\bullet$, and we have a decomposition
$$W_\bullet \cong \Omega_\bullet\ltimes W_{\aff,\bullet},$$
where $\Omega_\bullet$ denotes the elements of length 0 in $W_\bullet$.  Alternatively, $\Omega_\bullet$ is the subgroup of elements $\omega\in W_\bullet$ for which $\omega.C = C$.  The length function further inflates to $W_\bullet(1)$, being trivial on $(T_\bullet\cap I_\bullet)/(T_\bullet\cap I_\bullet(1))$.  

The injection $N_\s\longhookrightarrow N$ induces an injection $W_\s\longhookrightarrow W$, which fits into a diagram 
\begin{equation}\label{diag}
\begin{gathered}
\xymatrix{
1 \ar[r] & X_*(\bT_\s) \ar[r]\ar@{^{(}->}[d] & W_\s \ar[r]\ar@{^{(}->}[d] & W_{0,\s} \ar[r]\ar[d] & 1\\
1 \ar[r] & X_*(\bT) \ar[r] & W \ar[r] & W_0 \ar[r] & 1
}
\end{gathered}
\end{equation}
\noindent with exact rows and commuting squares.  Here we identify $X_*(\bT_\bullet)$ with $T_\bullet/(T_\bullet\cap I_\bullet)$ by sending the cocharacter $\xi$ to the class of $\xi(\varpi)$.  

\begin{lem}\label{sequalsaff}
The diagram \eqref{diag} above induces an isomorphism $W_{\aff,\s}\cong W_\aff$, and an injection $\Omega_\s\longhookrightarrow \Omega$.  
\end{lem}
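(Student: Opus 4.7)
The plan is to exploit the canonical identification $A_\s \cong A$ together with the already-noted bijections $\Phi \cong \Phi_\s$ (by restriction to $\bT_\s$) and $W_0 \cong W_{0,\s}$, and then to make everything match up by unwinding the actions of the Weyl groups on the common apartment.

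First I would verify that the inclusion $W_\s \longhookrightarrow W$ is compatible with the action on $A_\s = A$. By the diagram \eqref{diag}, the translation subgroup $X_*(\bT_\s)\subset W_\s$ maps to $X_*(\bT)\subset W$, and under the identification $A_\s \cong A$ the translation by $\xi\in X_*(\bT_\s)$ agrees with the translation by the image of $\xi$ in $X_*(\bT)/X_*(\bZ)$; similarly $W_{0,\s}\to W_0$ acts on both $A_\s$ and $A$ through the same linear reflections (since the roots are the same after restriction). This is the only genuine piece of checking and is the main obstacle, although it is essentially formal once one is careful about the identifications.

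With that compatibility, the equality $W_{\aff,\s} = W_\aff$ follows immediately. Indeed, by definition $W_{\aff,\bullet}$ is generated by the reflections in the affine hyperplanes of $A_\bullet$, and these hyperplanes coincide in $A_\s = A$ since they are the zero sets of affine roots, which match under $\Phi\cong \Phi_\s$ and the fixed hyperspecial point $x_0$. The corresponding reflections in $W_\s$ and in $W$ therefore have the same image in the affine transformation group of $A$, and since $W_\s\longhookrightarrow W$ is injective, the generators (hence the whole subgroups) are identified.

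Finally, for the injection $\Omega_\s\longhookrightarrow \Omega$, I would use the characterization $\Omega_\bullet = \{w\in W_\bullet : w.C = C\}$ recorded just before the lemma. Any $\omega\in \Omega_\s$ fixes $C\subset A_\s$, so by the action-compatibility step its image in $W$ fixes $C\subset A$, hence lies in $\Omega$. Injectivity is inherited from the injectivity of $W_\s\longhookrightarrow W$, completing the proof.
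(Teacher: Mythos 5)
Your approach is genuinely different from the paper's. The paper argues algebraically: it notes that $\imath$ induces a bijection on coroots, uses the splittings $W_{\aff,\bullet}=\bbZ(\Phi_\bullet^\vee)\rtimes W_{0,\bullet}$ coming from the hyperspecial vertex $x_0$ (citing Lusztig), and concludes directly that $\imath$ maps $W_{\aff,\s}$ isomorphically onto $W_\aff$; for $\Omega_\s\hookrightarrow\Omega$ it then invokes compatibility of $\imath$ with the length function. You instead argue geometrically through the common apartment. Your treatment of $\Omega_\s\hookrightarrow\Omega$ via the stabilizer characterization $\Omega_\bullet=\{w\in W_\bullet: w.C=C\}$ is correct and a perfectly good alternative to the length-function argument.

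However, there is a real gap in the central step. You pass from ``the corresponding reflections in $W_\s$ and in $W$ have the same image in $\textnormal{Aff}(A)$'' to ``the generators (hence the subgroups) are identified,'' citing the injectivity of $W_\s\hookrightarrow W$. That is the wrong injectivity. The issue is that the action of $W$ on $A$ is \emph{not} faithful: its kernel is the translation subgroup $X_*(\bZ)$, which is killed in passing to $A=(X_*(\bT)/X_*(\bZ))\otimes_\bbZ\bbR$. So knowing that $\imath(s)$ and the affine reflection $s'\in W_\aff$ act identically on $A$ only tells you $\imath(s)s'^{-1}\in X_*(\bZ)$, not that $\imath(s)=s'$. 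Injectivity of $W_\s\hookrightarrow W$ does nothing to close this gap. What you actually need is the observation that both $\imath(s)$ and $s'$ lie in $\bbZ(\Phi^\vee)\rtimes W_0\subset W$ (for $\imath(s)$ this uses that $\imath$ sends coroots to coroots and $W_{0,\s}$ to $W_0$; for $s'$ it is the definition of $W_\aff$), together with the fact that $\bbZ(\Phi^\vee)\cap X_*(\bZ)=0$, so that $W\to\textnormal{Aff}(A)$ is injective on this subgroup. Once you insert that, your argument goes through---but at that point you have essentially reproduced the paper's coroot-matching argument, and the detour through $\textnormal{Aff}(A)$ is unnecessary.
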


\begin{proof}
Let $\imath$ denote the injection $W_\s\longhookrightarrow W$, and note firstly that $\imath$ induces a bijection between the coroots of $\bT_\s$ and the coroots of $\bT$.  Let $\bbZ(\Phi_\bullet^\vee)\subset X_*(\bT_\bullet)$ denote the $\bbZ$-module generated by the coroots of $\bT_\bullet$.  %Given a root $\alpha\in\Phi$, let $\varphi_\alpha:\textbf{SL}_2\longrightarrow \bG$ denote the associated morphism.  Then $\alpha^\vee(t) = \varphi_\alpha(\textnormal{diag}(t,t^{-1}))$, and since $\textbf{SL}_2$ is simple, the image of $\varphi_\alpha$, and hence $\alpha^\vee$, must land in $\bG_\s$.  
Since the chosen vertex $x_0$ is hyperspecial, we have $\textnormal{stab}_{W_\bullet}(x_0)\cong W_{0,\bullet}$, which induces splittings of the short exact sequences above.  The discussion contained in Section 1.5 of \cite{Lu89} implies that the map
$$W_{\aff,\s} = \bbZ(\Phi_\s^\vee)\rtimes W_{0,\s}\stackrel{\imath}{\longrightarrow}\bbZ(\Phi^\vee)\rtimes W_0 = W_{\aff}$$
is an isomorphism.  One easily checks that the injection $\imath$ is compatible with the length function (using formula 1.4(a) in \emph{loc. cit.}, for example), which shows that the image of $\Omega_\s$ lies in $\Omega$.  
\end{proof}

\begin{cor}
The group $W_\s$ is normal in $W$, and $W_\s\backslash W$ admits coset representatives of length $0$.  
\end{cor}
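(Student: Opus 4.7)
The plan is to use the semidirect product decomposition $W = \Omega \ltimes W_\aff$ together with Lemma~\ref{sequalsaff}. Under the embedding $W_\s \hookrightarrow W$, the subgroup $W_{\aff,\s}$ is sent isomorphically onto $W_\aff$, so $W_\aff \subset W_\s$ inside $W$, and $W_\s = \Omega_\s \cdot W_\aff$ with $\Omega_\s \subset \Omega$.

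For the coset representative statement, given $w \in W$ I would write $w = \omega\, x$ with $\omega \in \Omega$ and $x \in W_\aff$. Since $x \in W_\aff \subset W_\s$, we have $w W_\s = \omega W_\s$, and $\omega$ has length zero by definition of $\Omega$. Thus every right coset of $W_\s$ in $W$ has a length-zero representative.

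For normality, I would observe that $W_\s$ is generated by $W_\aff$ and $\Omega_\s$, so it suffices to show that conjugation by any element of $W$ carries both subgroups into $W_\s$. Using $W = \Omega \cdot W_\aff$ together with the triviality that conjugation by elements of $W_\aff \subset W_\s$ preserves $W_\s$, one reduces to conjugation by $\omega \in \Omega$. Normality of $W_\aff$ in $W$ is built into the semidirect product decomposition, so $\omega W_\aff \omega^{-1} = W_\aff \subset W_\s$, and what remains is to see that $\omega \Omega_\s \omega^{-1}$ lies in $W_\s$.

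The main (small) obstacle is therefore to verify that $\Omega$ is abelian, after which $\omega \Omega_\s \omega^{-1} = \Omega_\s \subset W_\s$ is immediate. Using the hyperspecial splitting $W \cong X_*(\bT) \rtimes W_0$ and the corresponding description $W_\aff \cong \bbZ(\Phi^\vee) \rtimes W_0$, one checks that the map $(v, u) \mapsto \bar v \in X_*(\bT)/\bbZ(\Phi^\vee)$ is a well-defined homomorphism with kernel $W_\aff$. The only nontrivial input is that $(1 - u)\cdot v \in \bbZ(\Phi^\vee)$ for all $u \in W_0$ and $v \in X_*(\bT)$, which follows by a short induction on the length of $u$ using the reflection formula $s_\alpha \cdot v = v - \langle v, \alpha\rangle \alpha^\vee$. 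Hence $\Omega \cong X_*(\bT)/\bbZ(\Phi^\vee)$ is abelian, completing the argument.
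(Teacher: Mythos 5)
Your proof is correct. The paper offers no argument for this corollary — it is stated as an immediate consequence of Lemma \ref{sequalsaff} — and you are supplying precisely the reasoning one expects: $W_\aff\subset W_\s$ inside $W$ yields length-zero coset representatives via $W = \Omega\ltimes W_\aff$, and normality reduces to the abelianness of $\Omega\cong X_*(\bT)/\bbZ(\Phi^\vee)$, which you establish by the reflection formula and induction on length.

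Two small remarks. First, your coset step produces representatives for the left cosets $wW_\s$ (i.e.\ for $W/W_\s$), whereas the statement is about $W_\s\backslash W$; this is rectified either by invoking the normality you prove next, or directly by writing $w = x\omega$ with $x\in W_\aff$ (using that $W_\aff\trianglelefteq W$), so that $W_\s w = W_\s\omega$. Second, your normality argument can be compressed via the correspondence theorem: since $W_\aff\trianglelefteq W$ and $W_\aff\subset W_\s$, normality of $W_\s$ in $W$ is equivalent to normality of $W_\s/W_\aff\cong\Omega_\s$ in $W/W_\aff\cong\Omega$, and every subgroup of the abelian group $\Omega$ is normal. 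One could also bypass the Coxeter computation entirely by observing that $N_\s = N\cap G_\s$ is normal in $N$ (as $G_\s\trianglelefteq G$) and that $N$ normalizes the unique maximal compact subgroup $T\cap I$ of $T$, so that $W_\s = N_\s(T\cap I)/(T\cap I)$ is automatically normal in $W = N/(T\cap I)$; but your route stays within the Weyl-group framework the paper has already set up.
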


We now consider the following diagram

\centerline{
\xymatrix{
1 \ar[r] & T_\s(k) \ar[r]\ar@{^{(}->}[d] & W_\s(1) \ar[r]\ar@{^{(}->}[d] & W_\s \ar[r]\ar@{^{(}->}[d] & 1\\
1 \ar[r] & T(k) \ar[r] & W(1) \ar[r] & W \ar[r] & 1
}
}
\noindent with exact rows and commuting squares.  A quick diagram chase verifies the following.

\begin{lem}\label{length0}
The group $W_\s(1)$ is normal in $W(1)$, and $W_\s(1)\backslash W(1)$ admits coset representatives of length $0$.  
\end{lem}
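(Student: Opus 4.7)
The plan is to deduce both claims by a straightforward diagram chase from the preceding corollary (which gives the analogous statements for $W_\s \subset W$), using the compatibility of the two four-term short exact sequences displayed above the lemma.

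For normality, the key point is that for any lift $n \in N$ of $\tilde w \in W(1)$ and any $n' \in N_\s$ representing an element of $W_\s(1)$, the conjugate $n n' n^{-1}$ again lies in $N_\s$. This follows because $\bG_\s$ is normal in $\bG$ and $n$ normalizes $T$, so $n T_\s n^{-1} = n(T \cap G_\s) n^{-1} = T_\s$ and hence $n N_\s n^{-1} = N_\s$. The class of $n n' n^{-1}$ in $W(1)$ therefore lies in the image of $W_\s(1) \hookrightarrow W(1)$, proving normality.

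For the existence of length-$0$ coset representatives, I proceed by lifting. Given $\tilde w \in W(1)$ with image $w \in W$, the preceding corollary lets me factor $w = v \omega$ with $v \in W_\s$ and $\omega \in \Omega$. Any lift $\tilde \omega \in W(1)$ of $\omega$ has length $0$, since the length function on $W(1)$ is inflated from $W$. The main (and essentially only) obstacle is that the preimage of $W_\s$ in $W(1)$ is strictly larger than the image of $W_\s(1)$: it equals the product $T(k) \cdot W_\s(1)$, because $T(k) = \ker(W(1) \to W)$ can properly contain $T_\s(k)$. Thus $\tilde w \tilde \omega^{-1}$ decomposes as $\tilde v \cdot t$ with $\tilde v \in W_\s(1)$ and $t \in T(k)$, and rearranging yields $\tilde w = \tilde v \cdot (t \tilde \omega)$. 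The element $t \tilde \omega$ still has length $0$, since its image in $W$ is $\omega$, so the surplus $T(k)$-factor is harmlessly absorbed into the length-$0$ representative.
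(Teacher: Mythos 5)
Your proof is correct and is essentially the "quick diagram chase" that the paper invokes, with the details spelled out. In particular, your observation that normality reduces to $N_\s$ being normal in $N$ (via $G_\s \lhd G$ and $N$ normalizing $T_\s$), and your handling of the fact that the preimage of $W_\s$ in $W(1)$ is $T(k)\cdot W_\s(1)$ rather than $W_\s(1)$ alone, are exactly the points the paper leaves implicit.
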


\begin{lem}\label{WGcoset}
Let $\cZ$ denote a closed subgroup of the connected center $Z$ of $G$ which satisfies $\cZ\cap I(1)G_\s = \{1\}$ (this implies that $\cZ\cap W_\s(1) = \{1\}$, the intersection taken inside $W(1)$).  Then the map 
\begin{eqnarray*}
W_\s(1)\backslash W(1)/\cZ & \longrightarrow & \cZ I(1)\backslash G/G_\s\\
 W_\s(1)w\cZ & \longmapsto & \cZ I(1)n_wG_\s
\end{eqnarray*}
induces a bijection of sets.  Indeed, it is an isomorphism of groups.
\end{lem}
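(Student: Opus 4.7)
The plan is to identify both double-coset spaces with quotient groups and to recognize the stated map as the natural homomorphism induced by the inclusion $N\hookrightarrow G$.

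First, since $G_\s$ is the derived subgroup of $G$, the quotient $G/G_\s$ is abelian, so every subgroup containing $G_\s$ is normal in $G$. In particular $\cZ I(1)G_\s\triangleleft G$, and $\cZ I(1)\backslash G/G_\s$ canonically equals the group $G/(\cZ I(1)G_\s)$. For the other side, Lemma \ref{length0} gives $W_\s(1)\triangleleft W(1)$, and the composite $\cZ\hookrightarrow T\twoheadrightarrow T/(T\cap I(1))\hookrightarrow W(1)$ is injective (since $\cZ\cap I(1)\subseteq \cZ\cap I(1)G_\s=\{1\}$) and lands in the center of $W(1)$ (as $\cZ$ is central in $G$). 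Hence $W_\s(1)\cZ\triangleleft W(1)$, and the left-hand side is the group $W(1)/(W_\s(1)\cZ)$.

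The composition $N\hookrightarrow G\twoheadrightarrow G/(\cZ I(1)G_\s)$ factors through $W(1)$, because $T\cap I(1)\subseteq I(1)\subseteq \cZ I(1)G_\s$, and, after the identifications above, gives exactly the map of the lemma. So it suffices to show that this induced map $W(1)\to G/(\cZ I(1)G_\s)$ is surjective with kernel $W_\s(1)\cZ$. Surjectivity is immediate from the Bruhat decomposition \eqref{bruhat}: every $g\in G$ lies in some $I(1)n_wI(1)$, and since $\cZ I(1)G_\s$ is normal in $G$ and contains $I(1)$, the coset $g\cdot \cZ I(1)G_\s$ collapses to $n_w\cdot\cZ I(1)G_\s$. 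The containment $W_\s(1)\cZ\subseteq\ker$ is clear, since $N_\s\subseteq G_\s$.

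The main obstacle is the reverse kernel inclusion. Given $n\in N\cap\cZ I(1)G_\s$, I would write $n=zig$ with $z\in\cZ$, $i\in I(1)$, $g\in G_\s$, and set $h:=z^{-1}n=ig\in N$. Applying the Bruhat decomposition of $G_\s$, $g$ lies in a unique cell $I_\s(1)w_\s I_\s(1)$ for some $w_\s\in W_\s(1)$; combined with $I_\s(1)\subseteq I(1)$, this gives $h\in I(1)w_\s I(1)$. Since $h\in N$, uniqueness of the Bruhat stratum of $G$ containing $h$ forces the class of $h$ in $W(1)$ to be the image of $w_\s$ under the injection $W_\s(1)\hookrightarrow W(1)$. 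Hence the class of $n=zh$ in $W(1)$ lies in $\cZ\cdot W_\s(1)$, as desired. The delicate point throughout is precisely this compatibility of the Bruhat stratifications of $G_\s$ and $G$ under $W_\s(1)\hookrightarrow W(1)$.
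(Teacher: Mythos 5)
Your proof is correct, and it takes a genuinely different route from the paper's. The paper first reduces to the case $\cZ = \{1\}$, then invokes the Iwahori factorization $I(1) = (I(1)\cap T)(I(1)\cap U)(I(1)\cap U^-)$ to rewrite $I(1)G_\s\backslash G$ as $(I(1)\cap T)G_\s\backslash G$ (since $U, U^-\subset G_\s$). Surjectivity then comes from combining Bruhat with Iwahori to get $G = \bigsqcup_w I(1)n_w(I(1)\cap U)(I(1)\cap U^-)$, and injectivity is immediate: if $n_w = tg'$ with $t\in I(1)\cap T$ and $g'\in G_\s$, then $g' = t^{-1}n_w\in N\cap G_\s = N_\s$, and projecting to $W(1)$ gives $w\in W_\s(1)$. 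You instead work with $\cZ I(1)G_\s$ directly and avoid the Iwahori factorization altogether: your surjectivity is slightly cleaner (pure Bruhat plus normality of $\cZ I(1)G_\s$), while your kernel computation is slightly heavier, invoking uniqueness of Bruhat cells in both $G_\s$ and $G$ to pin down the class of $h = ig$. The compatibility you flag as the delicate point, namely that $I_\s(1)n_{w_\s}I_\s(1)\subset I(1)n_{\imath(w_\s)}I(1)$, does follow from what is already established: $I_\s(1)\subset I(1)$, and the embedding $W_\s(1)\hookrightarrow W(1)$ is by construction induced by $N_\s\hookrightarrow N$, so lifts agree. Both arguments are sound; the paper's trades the Iwahori decomposition up front for a one-line injectivity, whereas yours keeps the input minimal at the cost of a second appeal to Bruhat uniqueness.
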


\begin{proof}
It suffices to prove the claim with $\cZ = \{1\}$, since the assumption $\cZ\cap I(1)G_\s = \{1\}$ guarantees that $W_\s(1)\backslash W(1)$ fibers over $W_\s(1)\backslash W(1)/\cZ$ (resp. $I(1)\backslash G/G_\s$ fibers over $\cZ I(1)\backslash G/G_\s$) with fiber $\cZ$.  

Let $B\subset G$ denote the Borel subgroup defined by $\Phi^+$ containing $T$, with unipotent radical $U$, and let $U^-$ denote the opposite unipotent subgroup.  The Iwahori decomposition (\cite{Ti79}, Section 3.1.1) then implies
$$I(1) = (I(1)\cap T)(I(1)\cap U)(I(1)\cap U^-).$$
Since the group $G_\s$ is normal in $G$, we get $I(1)\backslash G/G_\s = I(1)G_\s\backslash G = (I(1)\cap T)G_\s\backslash G$, and a straightforward check shows that the group $(I(1)\cap T)G_\s$ is normal in $G$.  %For example, it suffices to show that conjugation by $I$ and $N$ preserves $(I(1)\cap T)G_\s$.  

One easily checks that the map $W_\s(1)\backslash W(1)\longrightarrow (I(1)\cap T)G_\s\backslash G$ is well-defined, and in fact defines a group homomorphism.  Since $N$ normalizes $I(1)\cap T$, the Bruhat and Iwahori decompositions imply
$$G = \bigsqcup_{w\in W(1)}I(1)n_w(I(1)\cap U)(I(1)\cap U^-),$$
which shows that the map is surjective.  Finally, assume that for $w\in W(1)$, we have $n_w = tg'$, $t\in I(1)\cap T, g'\in G_\s$.  This implies $t^{-1}n_w = g'\in G_\s\cap N = N_\s$, and therefore (by projecting to $W(1)$) we get $w\in W_\s(1)$.  This shows injectivity.  
\end{proof}

\section{Pro-$p$-Iwahori-Hecke Algebras}\label{algs}

\subsection{Structure of the algebras}

Let $\bullet$ denote the empty symbol or $\s$, and let $R$ denote an arbitrary commutative unital ring.  Given a smooth $R$-representation $\sigma$ of an open subgroup $J_\bullet$ of $G_\bullet$, we let $\textnormal{c-ind}_{J_\bullet}^{G_\bullet}(\sigma)$ denote the compact induction of $\sigma$ from $J_\bullet$ to $G_\bullet$.  That is, $\textnormal{c-ind}_{J_\bullet}^{G_\bullet}(\sigma)$ is the space of all functions $f:G_\bullet\longrightarrow\sigma$ for which $f(jg) = \sigma(j).f(g)$ for all $j\in J_\bullet, g\in G_\bullet$, such that the support of $f$ in $J_\bullet\backslash G_\bullet$ is compact.  The group $G_\bullet$ acts by right translation: if $f\in \textnormal{c-ind}_{J_\bullet}^{G_\bullet}(\sigma)$ and $g,g'\in G_\bullet$, we have $(g.f)(g') = f(g'g)$.

We consider the $R$-representation of $G_\bullet$ afforded by the \emph{pro-$p$ universal module}, defined by
$$\cC_{\bullet} := \textnormal{c-ind}_{I_\bullet(1)}^{G_\bullet}(1),$$
where $1$ denotes the free $R$-module of rank 1, with $I_\bullet(1)$ acting trivially.  For any element $g\in G_\bullet$, we denote by $\mathbf{1}_{I_\bullet(1)g}\in \cC_\bullet$ the characteristic function of the coset $I_\bullet(1)g$.

We define the \emph{pro-$p$-Iwahori-Hecke algebra} as
$$\cH_\bullet := \textnormal{End}_{G_\bullet}(\cC_\bullet)$$
with product given by composition.  The space $\cC_\bullet$ then becomes a left module over $\cH_\bullet$.  By Frobenius Reciprocity we have 
$$\cH_\bullet = \textnormal{End}_{G_\bullet}(\cC_\bullet)\cong \textnormal{Hom}_{I_\bullet(1)}(1,\cC_\bullet|_{I_\bullet(1)}) \cong \cC_\bullet^{I_\bullet(1)};$$
we therefore identify $\cH_\bullet$ with the $R$-module $R[I_\bullet(1)\backslash G_\bullet /I_\bullet(1)]$, with the product given by convolution.  For an element $g\in G_\bullet$, we let $\T_g^\bullet$ denote the characteristic function of the coset $I_\bullet(1)gI_\bullet(1)$.  By abuse of notation, we shall often speak of elements $\T_w^\bullet$, where $w\in W_\bullet(1)$; by the Bruhat decomposition (equation \eqref{bruhat}), this is independent of the choice of lift of $w$ to $N_\bullet$.  

We shall need one more algebra.  Let $\cZ$ be a closed subgroup of the connected center $Z$ of $G$ which satisfies $\cZ\cap I(1)G_\s = \{1\}$ (we allow the case $\cZ = \{1\}$).  Note that, when viewed as a subgroup of $W$, the elements of $\cZ$ have length 0.  We define
$$\underline{\cC} := \textnormal{c-ind}_{\cZ I(1)}^{G}(1),$$
where $1$ denotes the free $R$-module of rank 1, with $\cZ I(1)$ acting trivially.  We set
$$\underline{\cH} := \textnormal{End}_G(\underline{\cC});$$
by Frobenius Reciprocity, the algebra $\underline{\cH}$ identifies with $\underline{\cC}^{\cZ I(1)} = \underline{\cC}^{I(1)}$, with the product given by convolution of functions.  When $\cZ = \{1\}$, we have $\underline{\cC} = \cC$ and $\underline{\cH} = \cH$ as special cases.

Recall that we have an identification of $W_{\aff,\s}$ with $W_\aff$; we therefore identify the sets $S_\s$ and $S$.  For $s\in S$, there is an associated affine root $\alpha_\aff$ which is positive on $C$; we let $\alpha_s^\vee:\bG_m\longrightarrow\bT_\s\subset\bT$ denote the coroot associated to the ``root part'' of $\alpha_\aff$.  We set
$$\tau_s^\bullet := \sum_{a\in k^\times}\T_{\alpha_s^\vee(a)}^\bullet.$$

The structures of $\cH_\bullet$ and $\underline{\cH}$ are summarized in the following theorem.

\begin{thm}[\cite{Vig05}, Theorem 1 and \cite{OS11}, Section 5.1.3]\label{strthm}
 Let $\bullet$ denote either the empty symbol or $\s$.  
 \begin{enumerate}
  \item As an $R$-module, $\cH_\bullet$ is free with basis $\{\T_w^\bullet\}_{w\in W_\bullet(1)}$.
  \item (Braid relations)  We have 
  $$\T_w^\bullet\T_{w'}^\bullet = \T_{ww'}^\bullet$$
  for any $w,w'\in W_\bullet(1)$ satisfying $\ell(ww') = \ell(w) + \ell(w')$.  
  \item (Quadratic relations) For $s\in S$, we have
  $$(\T_{n_s}^\bullet)^2 = q\T_{n_s^2}^\bullet + \T_{n_s}^\bullet\tau_s^\bullet.$$
  \item Let $\Omega_\bullet(1)$ denote the preimage of $\Omega_\bullet$ under the natural projection $W_\bullet(1)\longrightarrow W_\bullet$.  Then the algebra $\cH_\bullet$ is generated by $\T_{n_s}^\bullet$ and $\T_\omega^\bullet$, where $s\in S$ and $\omega\in \Omega_\bullet(1)$. 
  \item We have an isomorphism of algebras
  $$\underline{\cH} \cong \cH/(\T_z - 1)_{z\in\cZ},$$
  which sends the characteristic function of $\cZ I(1)wI(1)$ to (the image of) the characteristic function of $I(1)wI(1)$.  
 \end{enumerate}
\end{thm}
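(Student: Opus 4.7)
Since this result recapitulates Vign\'eras's theorem and Section~5.1.3 of Ollivier--Schneider, the plan is to organize the key steps in order.

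Item (1) follows directly from the Bruhat decomposition \eqref{bruhat}: by Frobenius reciprocity $\cH_\bullet\cong R[I_\bullet(1)\backslash G_\bullet/I_\bullet(1)]$ as an $R$-module, and \eqref{bruhat} furnishes the stated basis. For the braid relations (2), I would evaluate the convolution
$$(\T_w^\bullet\,\T_{w'}^\bullet)(g) \;=\; \#\bigl\{h\in I_\bullet(1)\backslash G_\bullet \,:\, h\in I_\bullet(1)n_{w'}I_\bullet(1),\; gh^{-1}\in I_\bullet(1)n_{w}I_\bullet(1)\bigr\}$$
and invoke the standard Bruhat--Tits fact that length-additive products of Iwahori cells satisfy $I_\bullet(1)n_wI_\bullet(1)\cdot I_\bullet(1)n_{w'}I_\bullet(1) = I_\bullet(1)n_{ww'}I_\bullet(1)$ with unique factorization, so the only nonzero contribution is at $g\in I_\bullet(1)n_{ww'}I_\bullet(1)$, with multiplicity~$1$. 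For the quadratic relation (3), I would use the Iwahori decomposition to write $I_\bullet(1)n_sI_\bullet(1)$ as a disjoint union of $q$ left cosets indexed by a root subgroup modulo its pro-$p$ part, then multiply two such representatives; the resulting products split into those lying in $I_\bullet(1)n_s^2I_\bullet(1)$ (contributing $q\,\T_{n_s^2}^\bullet$) and those returning to $I_\bullet(1)n_sI_\bullet(1)$ after conjugation by elements $\alpha_s^\vee(a)\in T_\bullet(k)$ for $a\in k^\times$ (contributing $\T_{n_s}^\bullet\,\tau_s^\bullet$).

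Item (4) is a formal consequence of (1) and (2). From the extension $1\to T_\bullet(k)\to W_\bullet(1)\to W_\bullet\to 1$ and the decomposition $W_\bullet = \Omega_\bullet\ltimes W_{\aff,\bullet}$, together with the containment $T_\bullet(k)\subset\Omega_\bullet(1)$ (since $T_\bullet(k)$ has length~$0$), any $w\in W_\bullet(1)$ can be written as $\omega\cdot\widetilde{s}_1\cdots\widetilde{s}_r$ for some $\omega\in\Omega_\bullet(1)$ and chosen lifts $\widetilde{s}_i\in W_\bullet(1)$ of the simple reflections appearing in a reduced expression. Iterating (2), which applies because $\ell(\omega)=0$ and the $\widetilde{s}_i$ give a reduced expression, yields $\T_w^\bullet = \T_\omega^\bullet\,\T_{\widetilde{s}_1}^\bullet\cdots\T_{\widetilde{s}_r}^\bullet$.

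For item (5), Lemma~\ref{WGcoset} identifies $\cZ I(1)\backslash G/I(1)$ with $\cZ\backslash W(1)$, which both supplies a basis of $\underline{\cH}$ and exhibits a natural $R$-module surjection $\cH\twoheadrightarrow\underline{\cH}$ sending $\T_w$ to the characteristic function of $\cZ I(1)n_wI(1)$; a direct check on the convolution formula shows this is an algebra homomorphism. To compute its kernel, note that $\cZ\subset\Omega$, so the braid relation gives $(\T_z-1)\T_w = \T_{zw}-\T_w$, and elements of this form span the kernel over $R$ (one differs from the chosen representative of its $\cZ$-orbit by exactly such a telescoping combination). Since $\cZ$ is central in $G$, each $\T_z$ is central in $\cH$, so the left ideal $\sum_{z\in\cZ}(\T_z-1)\cH$ is already two-sided, identifying $\underline{\cH}$ with $\cH/(\T_z-1)_{z\in\cZ}$.

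The main obstacle is the quadratic relation (3): extracting the precise coefficient $q$ and identifying the tail $\T_{n_s}^\bullet\,\tau_s^\bullet$ requires a careful analysis of the structure of the parahoric at the wall fixed by $s$ and of the conjugation action of $T_\bullet(k)$ on the associated root subgroup, which is where the distinctive features of the pro-$p$-Iwahori case (as opposed to the ordinary Iwahori case) genuinely enter.
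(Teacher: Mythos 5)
The paper does not actually prove Theorem \ref{strthm}: it is stated as a citation to \cite{Vig05}, Theorem~1 and \cite{OS11}, Section~5.1.3, so there is no internal proof to compare against. With that caveat, your sketch follows the standard lines of argument from those references and captures the right ideas: Bruhat decomposition plus Frobenius reciprocity for the basis, length-additivity of double-coset products via the convolution formula for the braid relations, the coset decomposition of $I_\bullet(1)n_sI_\bullet(1)$ for the quadratic relation, reduced expressions for the generating set, and identifying the kernel of the natural surjection $\cH\twoheadrightarrow\underline{\cH}$ for item (5).

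One small inaccuracy worth flagging in your treatment of (5): you invoke Lemma~\ref{WGcoset} to identify $\cZ I(1)\backslash G/I(1)$ with $\cZ\backslash W(1)$, but that lemma concerns the quotient $\cZ I(1)\backslash G/G_\s$ and its identification with $W_\s(1)\backslash W(1)/\cZ$, which is a different double coset space. What you actually need here is the refined Bruhat decomposition \eqref{bruhat}, $G = \bigsqcup_{w\in W(1)} I(1)wI(1)$, together with the fact that $\cZ$ embeds in $\Omega\subset W(1)$ and acts freely on $W(1)$ by left translation; this directly yields $\cZ I(1)\backslash G/I(1)\cong\cZ\backslash W(1)$ without appeal to Lemma~\ref{WGcoset}. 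The rest of the argument for (5) — centrality of $\T_z$ via centrality of $z$, the telescoping identity $(\T_z-1)\T_w = \T_{zw}-\T_w$, and the spanning claim — is sound. As you note yourself, the quadratic relation (3) is where the real work of the cited references lies, and your outline leaves the precise parahoric/coset count at that step to the sources, which is appropriate given that the paper itself does the same.
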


For future applications, we will also need the affine subalgebra of $\cH_\bullet$.  

\begin{defn}\label{defaff}
  Let $W_{\aff,\bullet}(1)$ denote the preimage of $W_{\aff,\bullet}$ under the natural projection $W_\bullet(1)\longrightarrow W_\bullet$.  We denote by $\cH_{\aff,\bullet}$ the $R$-submodule of $\cH_\bullet$ generated by $\T_w$ for $w\in W_{\aff,\bullet}(1)$.  By Corollary 3 of \cite{Vig05}, $\cH_{\aff,\bullet}$ is a subalgebra of $\cH_\bullet$, called the \emph{affine pro-$p$-Iwahori-Hecke algebra}.  
  \end{defn}

\begin{remark}
By Theorem \ref{strthm}, we see that $\cH_{\aff,\bullet}$ is generated by the elements $\T_{n_s}$ and $\T_t$ for $s\in S$ and $t\in T_\bullet(k)$.  
\end{remark}

We now relate the various Hecke algebras.  We are mainly interested in how $\cH_\s$ is related to $\cH$; however, we give the proofs for the algebras $\cH_\s$ and $\underline{\cH}$, noting that $\underline{\cH} = \cH$ in the case $\cZ = \{1\}$.  

\begin{propn}\label{embedding}
Let $\overline{\T_g}$ denote the image of $\T_g$ in $\cH/(\T_z - 1)_{z\in\cZ}\cong \underline{\cH}$.  Then the linear map defined by
 \begin{center}
 \begin{tabular}{rrcl}
   $\mathfrak{f}:$& $\cH_\s$ & $\longrightarrow$ & $\underline{\cH}$\\
    & $\T_g^\s$ & $\longmapsto$ & $\overline{\T_g}$
 \end{tabular}
 \end{center}
 where $g\in G_\s$, is an injective algebra homomorphism.  
\end{propn}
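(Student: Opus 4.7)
The plan is to define $\mathfrak{f}$ on the basis $\{\T_w^\s\}_{w \in W_\s(1)}$ of $\cH_\s$ (Theorem \ref{strthm}(i)) by $\T_w^\s \mapsto \overline{\T_w}$ and verify: (a) the assignment is well-defined and $R$-linear, (b) it respects multiplication, and (c) it is injective. Well-definedness is essentially formal: the injection $W_\s(1) \hookrightarrow W(1)$ appearing in the diagram preceding Lemma \ref{length0} sends the class of $n \in N_\s$ to its class in $W(1)$, so for any lift $n_w \in N_\s \subset N$ of $w \in W_\s(1)$, the element $\overline{\T_{n_w}}$ in $\underline{\cH}$ depends only on $w$. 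Linearity is then automatic from extending by $R$-linearity on basis elements.

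For injectivity, I would use Theorem \ref{strthm}(v): since $\cZ$ lies in the center of $G$ (hence in the center of $W(1)$) and its elements have length $0$, setting $\T_z = 1$ for $z \in \cZ$ produces an algebra $\underline{\cH}$ which is free as an $R$-module on $\{\overline{\T_v}\}_{v \in W(1)/\cZ}$. The hypothesis $\cZ \cap I(1)G_\s = \{1\}$ in the definition of $\cZ$ gives $\cZ \cap W_\s(1) = \{1\}$ (as noted in Lemma \ref{WGcoset}), so the composition $W_\s(1) \hookrightarrow W(1) \twoheadrightarrow W(1)/\cZ$ is injective. Therefore $\mathfrak{f}$ sends the $R$-basis of $\cH_\s$ to a subset of the $R$-basis of $\underline{\cH}$, which proves injectivity once multiplicativity is established.

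For multiplicativity, I would invoke the presentation recorded in Theorem \ref{strthm}. It suffices to show that the images $\mathfrak{f}(\T_{n_s}^\s) = \overline{\T_{n_s}}$ (for $s \in S$) and $\mathfrak{f}(\T_\omega^\s) = \overline{\T_\omega}$ (for $\omega \in \Omega_\s(1)$) satisfy the braid and quadratic relations of $\cH_\s$ inside $\underline{\cH}$, since then the universal property arising from the presentation produces an algebra map $\cH_\s \to \underline{\cH}$ agreeing with $\mathfrak{f}$ on generators, and hence equal to $\mathfrak{f}$ on the whole basis. The braid relations transfer because Lemma \ref{sequalsaff} identifies $W_{\aff,\s}$ with $W_\aff$ and embeds $\Omega_\s$ in $\Omega$ compatibly with length, so for $w,w' \in W_\s(1)$ the identity $\ell(ww') = \ell(w) + \ell(w')$ holds in $W(1)$ as well, and $\T_w \T_{w'} = \T_{ww'}$ descends to $\underline{\cH}$. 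For the quadratic relation, the key point is that since $\alpha_s^\vee \in X_*(\bT_\s) \subset X_*(\bT)$, the element $\tau_s^\s = \sum_{a \in k^\times} \T_{\alpha_s^\vee(a)}^\s$ has $\mathfrak{f}(\tau_s^\s) = \overline{\tau_s}$, and the quadratic relation for $\T_{n_s}$ in $\cH$ passes through to $\underline{\cH}$.

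The main obstacle is verifying the quadratic relation cleanly: one must ensure that the element $\tau_s^\s$, defined intrinsically for $\cH_\s$, really does correspond under $\mathfrak{f}$ to $\overline{\tau_s}$. This reduces to checking that $\alpha_s^\vee(a)$, viewed in $T_\s$ or in $T$, gives the same element of $W(1)$ via the embedding $T_\s/(T_\s \cap I_\s(1)) \hookrightarrow T/(T \cap I(1)) \hookrightarrow W(1)$, which is immediate from the diagram preceding Lemma \ref{length0}. Once this compatibility is in hand, the braid relations are formal from Lemma \ref{sequalsaff}, and the proposition follows.
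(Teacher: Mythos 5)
Your proof is correct, but your injectivity argument is genuinely different from the one in the paper. The paper establishes injectivity at the level of universal modules: it defines a $G_\s$-equivariant injection $\mathfrak{f}: \cC_\s \to \underline{\cC}|_{G_\s}$ sending $\mathbf{1}_{I_\s(1)} \mapsto \mathbf{1}_{\cZ I(1)}$, and then takes $I_\s(1)$-invariants, using the identification $\underline{\cC}^{I_\s(1)} = \underline{\cC}^{\cZ I(1)} \cong \underline{\cH}$. You instead argue combinatorially: Theorem \ref{strthm}(v) shows $\underline{\cH}$ is free on a basis indexed by $W(1)/\cZ$, and since $\cZ \cap W_\s(1) = \{1\}$, the composite $W_\s(1) \hookrightarrow W(1) \twoheadrightarrow W(1)/\cZ$ is injective, so the basis $\{\T_w^\s\}$ of $\cH_\s$ maps to pairwise distinct basis vectors of $\underline{\cH}$. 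Both arguments work; yours is more elementary and self-contained, while the paper's has the advantage that the module-level map $\mathfrak{f}: \cC_\s \to \underline{\cC}|_{G_\s}$ is precisely the object reused in Lemma \ref{univmodres} and its corollaries, so building it here pays dividends later. The multiplicativity check is essentially the same in both treatments --- verifying the braid and quadratic relations on the generators $\T_{n_s}^\s$, $\T_\omega^\s$ using length compatibility and $\mathfrak{f}(\tau_s^\s) = \overline{\tau_s}$ --- though you invoke the universal property of the presentation while the paper does a direct computation on the linear map. One small step you gloss over: after producing an algebra map $\phi$ from the presentation that agrees with $\mathfrak{f}$ on generators, concluding $\phi = \mathfrak{f}$ on the whole basis still requires observing that every $\T_w^\s$ is a product of generators along a reduced expression (writing $w = \omega n_{s_1}\cdots n_{s_k}$ with $\omega \in \Omega_\s(1)$ of length zero) and that both sides compute that product to $\overline{\T_w}$ via the braid relations in $\cH$. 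This is routine but worth making explicit.
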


\begin{proof}
The $G_\s$-linear map defined by
\begin{center}
 \begin{tabular}{rrcl}
   $\mathfrak{f}$: & $\cC_\s$ & $\longrightarrow$ & $\underline{\cC}|_{G_\s}$\\
    & $\mathbf{1}_{I_\s(1)}$ & $\longmapsto$ & $\mathbf{1}_{\cZ I(1)}$
 \end{tabular}
 \end{center}
is easily seen to be injective.  Taking $I_\s(1)$-invariants gives the injection
$$\cH_\s\cong \cC_\s^{I_\s(1)}\stackrel{\mathfrak{f}}{\longhookrightarrow} \underline{\cC}^{I_\s(1)}  = \underline{\cC}^{\cZ I(1)} \cong \underline{\cH},$$
which sends $\T_g^\s$ to $\overline{\T_g}$ for $g\in G_\s$.

It remains to check compatibility of $\mathfrak{f}$ with the algebra structures.  The injection $W_\s(1)\longhookrightarrow W(1)$ is compatible with the length function $\ell$.  Hence, if $w,w'\in W_\s(1)$ satisfy $\ell(ww') = \ell(w) + \ell(w')$, we get
$$\mathfrak{f}(\T_w^\s)\mathfrak{f}(\T_{w'}^\s) = \overline{\T_w\T_{w'}} = \overline{\T_{ww'}} = \mathfrak{f}(\T_{ww'}^\s) = \mathfrak{f}(\T_w^\s\T_{w'}^\s).$$
In addition, 
$$\mathfrak{f}(\tau_s^\s) = \overline{\tau_s}$$
for $s\in S$.  Since $\ell(t) = 0$ for $t\in T_\s(k)$, we obtain
\begin{eqnarray*}
(\mathfrak{f}(\T_{n_s}^\s))^2 & = & \overline{\T_{n_s}}^2\\
 & = & q\overline{\T_{n_s^2}} + \overline{\T_{n_s}\tau_s}\\
 & = & q\mathfrak{f}(\T_{n_s^2}^\s) + \mathfrak{f}(\T_{n_s}^\s)\mathfrak{f}(\tau_s^\s)\\
 & = & \mathfrak{f}(q\T_{n_s^2}^\s + \T_{n_s}^\s\tau_s^\s)\\
 & = & \mathfrak{f}((\T_{n_s}^\s)^2).
\end{eqnarray*}
\end{proof}

Using the proposition above, we shall henceforth identify $\cH_\s$ with its images in $\cH$ and $\underline{\cH}$.  We also fix a set of length 0 representatives $\cR$ for $W_\s(1)\backslash W(1)/\cZ$, which contains $1$ (cf. Lemmas \ref{length0} and \ref{WGcoset}).

\begin{lem}
Let $\omega\in \cR$.  Then the subspace $\cH_\s\overline{\T_\omega}$ of $\underline{\cH}$ is stable by left and right multiplication by $\cH_\s$.  
\end{lem}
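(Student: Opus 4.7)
\medskip

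\noindent\textbf{Proof proposal.} The plan is to check the two stability properties separately. Left-multiplication stability is essentially automatic: since $\cH_\s$ is a subalgebra of $\underline{\cH}$ (via the injection $\mathfrak{f}$ of Proposition \ref{embedding}), for any $h, h' \in \cH_\s$ we have $h \cdot (h'\overline{\T_\omega}) = (hh')\overline{\T_\omega} \in \cH_\s \overline{\T_\omega}$. The substance of the lemma is therefore the stability under right multiplication.

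For right multiplication, by linearity it suffices to show that $\overline{\T_\omega}\,\overline{\T_w} \in \cH_\s \overline{\T_\omega}$ for every $w \in W_\s(1)$. The key ingredients I would invoke are: (i) $\omega$ has length $0$ in $W(1)$ (by construction of the set of representatives $\cR$ together with Lemmas \ref{length0} and \ref{WGcoset}), and (ii) $W_\s(1)$ is normal in $W(1)$ (Lemma \ref{length0}). Because $\ell(\omega) = 0$, the length function satisfies $\ell(\omega w) = \ell(w) = \ell(\omega w \omega^{-1})$, and hence $\ell((\omega w \omega^{-1})\omega) = \ell(\omega w \omega^{-1}) + \ell(\omega)$. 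The braid relations of Theorem \ref{strthm} then give, inside $\cH$,
\[
\T_\omega \T_w \;=\; \T_{\omega w} \;=\; \T_{(\omega w \omega^{-1})\omega} \;=\; \T_{\omega w \omega^{-1}}\, \T_\omega.
\]
Passing to $\underline{\cH}$ yields $\overline{\T_\omega}\,\overline{\T_w} = \overline{\T_{\omega w \omega^{-1}}}\,\overline{\T_\omega}$. By normality of $W_\s(1)$, the element $\omega w \omega^{-1}$ lies in $W_\s(1)$, so $\overline{\T_{\omega w \omega^{-1}}} = \mathfrak{f}(\T_{\omega w \omega^{-1}}^\s) \in \cH_\s$, and the right-hand side lies in $\cH_\s \overline{\T_\omega}$, as desired.

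Putting the two halves together, an arbitrary element $h\,\overline{\T_\omega} \cdot h'$ with $h,h' \in \cH_\s$ (reducing $h'$ to a basis element $\T_w^\s$) equals $h\, \overline{\T_{\omega w \omega^{-1}}}\, \overline{\T_\omega}$, which is again in $\cH_\s\overline{\T_\omega}$ because $h\,\overline{\T_{\omega w \omega^{-1}}}$ is a product of two elements of the subalgebra $\cH_\s$.

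\medskip

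\noindent\textbf{Expected obstacles.} There is essentially no deep obstacle — the argument is mechanical given the structural facts already established. The only point requiring care is the verification that the braid relation $\T_\omega \T_w = \T_{\omega w}$ (and similarly for $\T_{\omega w \omega^{-1}}\T_\omega$) actually applies; this reduces to checking the length identities, which follow from $\ell(\omega)=0$ and the fact that the length function on $W(1)$ is inflated from $W$, where multiplication by length-$0$ elements preserves length. Once this is in hand, normality of $W_\s(1)$ in $W(1)$ and the fact that $\mathfrak{f}$ is an algebra homomorphism do the rest.
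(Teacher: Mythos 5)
Your proof is correct and takes essentially the same route as the paper: reduce to basis elements $\T_w^\s$, use $\ell(\omega)=0$ and the braid relations to write $\overline{\T_\omega}\,\overline{\T_w} = \overline{\T_{\omega w\omega^{-1}}}\,\overline{\T_\omega}$, and conclude by normality of $W_\s(1)$ in $W(1)$. The paper simply states "one easily checks that $\ell(\omega w\omega^{-1}) = \ell(w)$" where you spell out the length computation; otherwise the arguments coincide.
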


\begin{proof}
We prove the claim for right multiplication, the case of left multiplication being obvious.  Let $w\in W_\s(1)\subset W(1)$.  One easily checks that $\ell(\omega w\omega^{-1}) = \ell(w)$, and we obtain
$$\overline{\T_{\omega}\T_w} = \overline{\T_{\omega w}} = \overline{\T_{\omega w\omega^{-1}}\T_{\omega}}\in \cH_\s\overline{\T_{\omega}},$$ 
which suffices to prove the claim.  
\end{proof}

\begin{propn}\label{freeness}
As a left (resp. right) $\cH_\s$-module, $\underline{\cH}$ is free with basis 
 $\{\ol{\T_{\omega}}\}_{\omega\in\cR}$.  
\end{propn}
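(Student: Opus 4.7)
The plan is to exhibit an explicit $R$-basis of $\underline{\cH}$ of the form $\{\mathfrak{f}(\T_{w'}^\s)\,\overline{\T_\omega}\}$, where $w'$ ranges over $W_\s(1)$ and $\omega$ over $\cR$. I will treat the left module case and note at the end that the right case is entirely parallel.

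First I would record the underlying $R$-basis of $\underline{\cH}$. By Theorem \ref{strthm}(1) and the description of $\underline{\cH}$ in Theorem \ref{strthm}(5), the algebra $\underline{\cH} \cong \cH/(\T_z - 1)_{z\in\cZ}$ is $R$-free on $\{\overline{\T_w}\}$ as $w$ ranges over a set of representatives of $W(1)/\cZ$ (since $\cZ$ is central in $W(1)$, this is unambiguous). Next I would set up a unique decomposition: every $w\in W(1)$ can be written as $w = w'\omega z$ with $w'\in W_\s(1)$, $\omega\in\cR$, $z\in\cZ$. Existence is immediate because $\cR$ represents the double cosets $W_\s(1)\backslash W(1)/\cZ$. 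For uniqueness, if $w'\omega z = w''\omega z'$, then $(w'')^{-1}w' = \omega z'z^{-1}\omega^{-1} = z'z^{-1}$ by the centrality of $\cZ$, so $z'z^{-1}\in\cZ\cap W_\s(1) = \{1\}$ by hypothesis, forcing $z = z'$ and $w' = w''$. This gives a bijection $W_\s(1)\times\cR \longleftrightarrow W(1)/\cZ$ via $(w',\omega)\mapsto w'\omega\cZ$, and consequently $\{\overline{\T_{w'\omega}}\}_{w'\in W_\s(1),\,\omega\in\cR}$ is an $R$-basis of $\underline{\cH}$.

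Then I would apply the braid relations to convert this to the desired form. Since each $\omega\in\cR$ has length $0$, we have $\ell(w'\omega) = \ell(w') + \ell(\omega)$ for every $w'\in W_\s(1)$, so Theorem \ref{strthm}(2) gives $\T_{w'}\T_\omega = \T_{w'\omega}$ in $\cH$. Reducing mod $(\T_z - 1)_{z\in\cZ}$ and using $\mathfrak{f}(\T_{w'}^\s) = \overline{\T_{w'}}$, we obtain
$$\mathfrak{f}(\T_{w'}^\s)\cdot \overline{\T_\omega} \;=\; \overline{\T_{w'\omega}}.$$
Combining with the previous paragraph, the family $\{\mathfrak{f}(\T_{w'}^\s)\,\overline{\T_\omega}\}$ is an $R$-basis of $\underline{\cH}$, and hence $\underline{\cH} = \bigoplus_{\omega\in\cR}\cH_\s\,\overline{\T_\omega}$ with each summand free of rank one over $\cH_\s$ on the generator $\overline{\T_\omega}$.

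For the right module statement, I would repeat the argument with the decomposition $w = \omega w''z$, which is unique for the same reasons (using the normality of $W_\s(1)$ in $W(1)$ from Lemma \ref{length0} together with $\cZ\cap W_\s(1) = \{1\}$) and which yields $\overline{\T_\omega}\cdot\mathfrak{f}(\T_{w''}^\s) = \overline{\T_{\omega w''}}$ via $\ell(\omega w'') = \ell(w'')$. There is no serious obstacle; the whole proof is a combinatorial unwinding, and the only point requiring genuine care is the uniqueness of the decomposition $w = w'\omega z$, which depends essentially on the hypothesis $\cZ\cap W_\s(1) = \{1\}$ and the centrality of $\cZ$ in $W(1)$.
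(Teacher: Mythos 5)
Your proof is correct and follows essentially the same route the paper intends: the paper's one-line proof cites Lemma~\ref{length0} (length-$0$ representatives $\cR$ of $W_\s(1)\backslash W(1)/\cZ$) and the braid relations, which is exactly the content you spell out via the unique decomposition $w = w'\omega z$ and the identity $\overline{\T_{w'}}\,\overline{\T_\omega} = \overline{\T_{w'\omega}}$. The only place to tighten the write-up is the uniqueness step: you handle $w'\omega z = w''\omega z'$ with the \emph{same} $\omega$, but should note first that $\omega$ itself is determined because distinct elements of $\cR$ lie in distinct double cosets $W_\s(1)\backslash W(1)/\cZ$; once that is said, your centrality argument pins down $z$ and then $w'$, and the rest goes through as you describe.
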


\begin{proof}
This follows immediately from Lemma \ref{length0} and the braid relations of Theorem \ref{strthm}.   
 \end{proof}

\begin{cor}\label{dirsumdecomp}
The algebra $\cH_\s$ is a direct summand of $\underline{\cH}$ as a left (resp. right) $\cH_\s$ module.  
\end{cor}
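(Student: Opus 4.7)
The plan is to deduce this corollary as an immediate consequence of Proposition \ref{freeness}, exploiting the fact that our chosen set $\cR$ of coset representatives was required to contain the identity $1 \in W_\s(1)\backslash W(1)/\cZ$. Under this assumption, the basis element $\overline{\T_1}$ appearing in Proposition \ref{freeness} is precisely the multiplicative identity of $\underline{\cH}$.

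Concretely, I would split the free decomposition from Proposition \ref{freeness} by isolating the summand indexed by $\omega = 1$:
$$\underline{\cH} \;=\; \bigoplus_{\omega\in\cR} \cH_\s\,\overline{\T_\omega} \;=\; \cH_\s\cdot\overline{\T_1} \;\oplus\; \bigoplus_{\substack{\omega\in\cR\\ \omega\neq 1}} \cH_\s\,\overline{\T_\omega}.$$
Since $\overline{\T_1} = 1$ in $\underline{\cH}$, the first summand is $\cH_\s$ itself (under the identification of $\cH_\s$ with its image in $\underline{\cH}$ via the injection $\mathfrak{f}$ of Proposition \ref{embedding}, which sends $\T_g^\s$ to $\overline{\T_g}$). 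This realizes $\cH_\s$ as a direct summand of $\underline{\cH}$ as a left $\cH_\s$-module, with explicit complement $\bigoplus_{\omega\neq 1} \cH_\s\overline{\T_\omega}$. The right-module case is obtained by repeating the same argument on the other side, using the right-module version of Proposition \ref{freeness}.

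There is essentially no obstacle here beyond correct bookkeeping: the only point to verify is that the identity coset was included in $\cR$, which was built into the choice made right after Proposition \ref{embedding}, and that $\overline{\T_1}$ is indeed the unit of $\underline{\cH}$ (clear from the definition of $\underline{\cH}$ as a convolution algebra of functions on $\cZ I(1)\backslash G/\cZ I(1)$). Hence the corollary is a direct corollary of Proposition \ref{freeness} with no further content.
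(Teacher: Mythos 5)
Your argument is exactly the intended one: the paper states Corollary \ref{dirsumdecomp} with no proof precisely because it follows immediately from Proposition \ref{freeness}, using that $1\in\cR$ and $\overline{\T_1}$ is the unit. Your proposal is correct and matches the paper's approach.
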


We now use the above results to relate the modules $\cC_\s$, $\cC$, and $\underline{\cC}$.

\begin{lem}\label{univmodres}
There exists an isomorphism
$$\underline{\cH}\otimes_{\cH_\s}\cC_\s \cong \underline{\cC}|_{G_\s}$$
which is both $G_\s$-equivariant and $\underline{\cH}$-equivariant.
\end{lem}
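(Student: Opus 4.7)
The plan is to produce the obvious candidate map and show it is an isomorphism by decomposing both sides along the coset representatives $\cR$. Define
$$\Phi \colon \underline{\cH}\otimes_{\cH_\s}\cC_\s \longrightarrow \underline{\cC}|_{G_\s}, \qquad h\otimes c\longmapsto h\cdot \mathfrak{f}(c),$$
where $\mathfrak{f}\colon \cC_\s\longhookrightarrow \underline{\cC}|_{G_\s}$ is the $G_\s$-linear injection constructed in the proof of Proposition \ref{embedding}, and $h$ acts through the identification $\underline{\cH} = \textnormal{End}_G(\underline{\cC})$. Proposition \ref{embedding} ensures that $\Phi$ descends to the tensor product; it is tautologically $\underline{\cH}$-equivariant, and $G_\s$-equivariant because $\mathfrak{f}$ is, and because the $\underline{\cH}$- and $G$-actions on $\underline{\cC}$ commute.

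Next, I would decompose both sides. The freeness asserted by Proposition \ref{freeness} gives
$$\underline{\cH}\otimes_{\cH_\s}\cC_\s \;=\; \bigoplus_{\omega\in\cR} \overline{\T_\omega}\otimes\cC_\s,$$
with each summand isomorphic to $\cC_\s$ as a $G_\s$-representation. On the other side, Lemma \ref{WGcoset} yields $G = \bigsqcup_{\omega\in\cR}\cZ I(1)n_\omega G_\s$, so $\underline{\cC}|_{G_\s}$ splits as $\bigoplus_{\omega\in\cR}M_\omega$, where $M_\omega$ denotes the $G_\s$-subrepresentation of those elements of $\underline{\cC}$ supported on the orbit $\cZ I(1)n_\omega G_\s$.

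The step I expect to be the main obstacle is the identification of $M_\omega$ with $\cC_\s$. Because $\omega$ has length $0$ in $W(1)$, any lift $n_\omega$ normalizes $I(1)$; combined with the centrality of $\cZ$, this gives $n_\omega^{-1}\cZ I(1)n_\omega = \cZ I(1)$. The hypothesis $\cZ\cap I(1)G_\s = \{1\}$ then forces $\cZ I(1)\cap G_\s = I_\s(1)$, and a Mackey-type computation identifies $M_\omega$ with $\cC_\s$ via the $G_\s$-equivariant map
$$\iota_\omega\colon \mathbf{1}_{I_\s(1)g}\longmapsto \mathbf{1}_{\cZ I(1)n_\omega g}, \qquad g\in G_\s.$$

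Finally, to conclude it suffices to check that $\Phi$ carries the $\omega$-summand on the left onto $M_\omega$ on the right. Under the identification $\underline{\cH}\cong\underline{\cC}^{\cZ I(1)}$, the element $\overline{\T_\omega}$ corresponds to the characteristic function of $\cZ I(1)n_\omega I(1) = \cZ I(1)n_\omega$, so its action on $\mathbf{1}_{\cZ I(1)}$ produces $\mathbf{1}_{\cZ I(1)n_\omega}$. Hence $\Phi(\overline{\T_\omega}\otimes \mathbf{1}_{I_\s(1)}) = \iota_\omega(\mathbf{1}_{I_\s(1)})$, and $G_\s$-equivariance propagates this to $\Phi(\overline{\T_\omega}\otimes c) = \iota_\omega(c)$ for every $c\in\cC_\s$. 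Thus $\Phi$ is a direct sum of isomorphisms, and hence itself an isomorphism, as required.
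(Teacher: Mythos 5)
Your proof is correct and follows essentially the same route as the paper: the map you call $\Phi$ is exactly the paper's $\widetilde{\mathfrak{f}}$, and both arguments decompose the two sides along the length-zero coset representatives $\cR$ using Proposition \ref{freeness} on the left and Mackey decomposition plus Lemma \ref{WGcoset} on the right. The only difference is that you spell out the identification of each summand $M_\omega$ with $\cC_\s$ via $\iota_\omega$ and verify $\Phi(\overline{\T_\omega}\otimes \mathbf{1}_{I_\s(1)}) = \mathbf{1}_{\cZ I(1)n_\omega}$ explicitly, where the paper simply asserts the summand-wise isomorphism as clear.
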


\begin{proof}
Recall the map $\mathfrak{f}$ defined in the proof of Proposition \ref{embedding}:
\begin{center}
 \begin{tabular}{rrcl}
   $\mathfrak{f}$: & $\cC_{\s}$ & $\longrightarrow$ & $\underline{\cC}|_{G_\s}$\\
    & $\mathbf{1}_{I_\s(1)}$ & $\longmapsto$ & $\mathbf{1}_{\cZ I(1)}$
 \end{tabular}
 \end{center}
It is obviously $G_\s$- and $\cH_\s$-equivariant.  By Frobenius Reciprocity, we obtain a map $\widetilde{\mathfrak{f}}$, defined by
\begin{center}
 \begin{tabular}{rrcl}
   $\widetilde{\mathfrak{f}}$: & $\underline{\cH}\otimes_{\cH_\s}\cC_{\s}$ & $\longrightarrow$ & $\underline{\cC}|_{G_\s}$\\
    & $\overline{\T_{w}}\otimes g.\mathbf{1}_{I_\s(1)}$ & $\longmapsto$ & $\overline{\T_w}(\mathfrak{f}(g.\mathbf{1}_{I_\s(1)})) = g.\overline{\T_w}(\mathbf{1}_{\cZ I(1)})$
 \end{tabular}
 \end{center}
for $g\in G_\s$ and $w\in W(1)$.  The map $\widetilde{\mathfrak{f}}$ is $G_\s$- and $\underline{\cH}$-equivariant.  It remains to show that it is an isomorphism.

Using the Mackey decomposition and Lemma \ref{WGcoset}, we obtain
$$\underline{\cC}|_{G_\s} \cong \bigoplus_{\omega\in \cR} \textnormal{c-ind}_{\cZ I(1)}^{\cZ I(1)n_\omega G_\s}(1),$$
where $\textnormal{c-ind}_{\cZ I(1)}^{\cZ I(1)n_\omega G_\s}(1)$ denotes the subspace of $\underline{\cC}$ with support contained in $\cZ I(1)n_\omega G_\s$.  Analogously, $\{\overline{\T_{\omega}}\}_{\omega\in\cR}$ is a basis for $\underline{\cH}$ over $\cH_\s$, and we obtain
$$\underline{\cH}\otimes_{\cH_\s}\cC_\s \cong \bigoplus_{\omega\in\cR}\overline{\T_{\omega}}\otimes_{\cH_\s}\cC_\s.$$
It is clear that $\widetilde{\mathfrak{f}}$ defines an isomorphism between $\overline{\T_{\omega}}\otimes_{\cH_\s}\cC_\s$ and $\textnormal{c-ind}_{\cZ I(1)}^{\cZ I(1)n_\omega G_\s}(1)$.  
\end{proof}

\begin{cor}\label{easyisom}
Let $\MM$ be a right $\underline{\cH}$-module.  We then have an isomorphism 
 $$\MM|_{\cH_\s}\otimes_{\cH_\s}\cC_\s\cong \MM\otimes_{\underline{\cH}} \underline{\cC}|_{G_\s}$$
as $G_\s$-representations.
\end{cor}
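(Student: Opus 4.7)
The plan is to reduce this corollary to Lemma \ref{univmodres} by a standard associativity-of-tensor-products argument, with the only content being careful bookkeeping of the various module structures and the $G_\s$-action.

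First I would rewrite the right-hand side using Lemma \ref{univmodres}. Substituting the $(\underline{\cH}, G_\s)$-bimodule isomorphism $\underline{\cC}|_{G_\s} \cong \underline{\cH}\otimes_{\cH_\s}\cC_\s$ gives
\begin{equation*}
\MM \otimes_{\underline{\cH}} \underline{\cC}|_{G_\s} \;\cong\; \MM \otimes_{\underline{\cH}} \bigl(\underline{\cH} \otimes_{\cH_\s} \cC_\s\bigr).
\end{equation*}
Here $\MM$ is a right $\underline{\cH}$-module, $\underline{\cH}$ is an $(\underline{\cH}, \cH_\s)$-bimodule (left multiplication by $\underline{\cH}$, right multiplication by the subalgebra $\cH_\s \subset \underline{\cH}$ of Proposition \ref{embedding}), and $\cC_\s$ is a left $(\cH_\s, G_\s)$-bimodule since $\cH_\s$ and $G_\s$ act by commuting operators on $\cC_\s$. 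Thus the right-hand side makes sense as a $G_\s$-representation, with $G_\s$ acting through its action on $\cC_\s$.

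Next I would invoke associativity of the tensor product in the form
\begin{equation*}
\MM \otimes_{\underline{\cH}} \bigl(\underline{\cH} \otimes_{\cH_\s} \cC_\s\bigr) \;\cong\; \bigl(\MM \otimes_{\underline{\cH}} \underline{\cH}\bigr) \otimes_{\cH_\s} \cC_\s,
\end{equation*}
followed by the canonical identification $\MM \otimes_{\underline{\cH}} \underline{\cH} \cong \MM$ as right $\underline{\cH}$-modules, which restricts along $\cH_\s \hookrightarrow \underline{\cH}$ to $\MM|_{\cH_\s}$ as right $\cH_\s$-modules. Composing yields $\MM|_{\cH_\s} \otimes_{\cH_\s} \cC_\s$.

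The only thing left to check is that each of these isomorphisms respects the $G_\s$-action. This is automatic, since in every step the $G_\s$-action is carried entirely by the factor $\cC_\s$ (or equivalently by $\underline{\cC}|_{G_\s}$ via Lemma \ref{univmodres}, whose isomorphism is already known to be $G_\s$-equivariant), and the associativity and unit isomorphisms of the tensor product are functorial in the bimodule arguments, hence commute with any additional commuting group action on the outermost factor. I do not anticipate any real obstacle here; the corollary is essentially a formal consequence of Lemma \ref{univmodres} together with the fact that $\MM \otimes_{\underline{\cH}} \underline{\cH} \cong \MM$.
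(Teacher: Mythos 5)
Your proof is correct and is exactly the formal argument the paper leaves implicit (the corollary is stated with no proof, signalling that it follows by cancellation of tensor products from Lemma \ref{univmodres}). The bimodule bookkeeping is handled properly: $\underline{\cH}$ is an $(\underline{\cH},\cH_\s)$-bimodule via Proposition \ref{embedding}, the $\cH_\s$- and $G_\s$-actions on $\cC_\s$ commute by construction, and the associativity and unit isomorphisms are $G_\s$-equivariant since $G_\s$ only ever acts through the $\cC_\s$ factor.
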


\begin{cor}\label{cflatiffcsflat}
The module $\cC$ is flat (resp. projective) over $\cH$ if and only if $\cC_\s$ is flat (resp. projective) over $\cH_\s$, if and only if $\underline{\cC}$ is flat (resp. projective) over $\underline{\cH}$.  
\end{cor}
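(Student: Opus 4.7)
The plan is to prove the slightly stronger statement that, for an arbitrary admissible $\cZ$, the module $\cC_\s$ is flat (resp.\ projective) over $\cH_\s$ if and only if $\underline{\cC}$ is flat (resp.\ projective) over $\underline{\cH}$. The equivalence involving ``$\cC$ over $\cH$'' is then simply the special case $\cZ = \{1\}$, in which $\underline{\cH} = \cH$ and $\underline{\cC} = \cC$, and the three conditions are linked by transitivity.

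For the forward direction, the flatness assertion is essentially immediate from Corollary \ref{easyisom}: the functor $\MM \mapsto \MM \otimes_{\underline{\cH}} \underline{\cC}$ on right $\underline{\cH}$-modules is naturally isomorphic to the composition $\MM \mapsto \MM|_{\cH_\s} \otimes_{\cH_\s} \cC_\s$, and restriction along the embedding $\cH_\s \hookrightarrow \underline{\cH}$ is exact, so flatness of $\cC_\s$ over $\cH_\s$ passes up to flatness of $\underline{\cC}$ over $\underline{\cH}$. For projectivity, I would instead invoke Lemma \ref{univmodres}, which presents $\underline{\cC} \cong \underline{\cH} \otimes_{\cH_\s} \cC_\s$ as $\underline{\cH}$-modules; since induction along $\cH_\s \hookrightarrow \underline{\cH}$ carries a direct summand of a free $\cH_\s$-module to a direct summand of a free $\underline{\cH}$-module, projectivity is preserved.

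For the converse, given a short exact sequence $0 \to N_1 \to N_2 \to N_3 \to 0$ of right $\cH_\s$-modules, I would first apply induction $- \otimes_{\cH_\s} \underline{\cH}$, which is exact because Proposition \ref{freeness} makes $\underline{\cH}$ a free left $\cH_\s$-module. Tensoring the resulting sequence with $\underline{\cC}$ over $\underline{\cH}$ is exact by hypothesis, and associativity of tensor products collapses the composition to $N \otimes_{\cH_\s} \underline{\cC}|_{\cH_\s}$, which is therefore exact in $N$. The Mackey decomposition inside the proof of Lemma \ref{univmodres} realizes $\cC_\s$ as the summand indexed by $\omega = 1 \in \cR$ (one checks $\cZ I(1) \cap G_\s = I_\s(1)$ from the hypothesis on $\cZ$), so $\underline{\cC}|_{\cH_\s} \cong \cC_\s \oplus E$ as $\cH_\s$-modules. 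Since tensor commutes with direct sums and a direct-sum-of-complexes is exact if and only if each summand is, exactness descends to $N \otimes_{\cH_\s} \cC_\s$, giving flatness of $\cC_\s$. The projective version is entirely parallel: if $\underline{\cC}$ is a direct summand of a free $\underline{\cH}$-module, restricting along $\cH_\s \hookrightarrow \underline{\cH}$ and invoking Proposition \ref{freeness} exhibits $\underline{\cC}|_{\cH_\s}$ as a direct summand of a free $\cH_\s$-module, and $\cC_\s$ inherits projectivity as a further direct summand. The only genuinely non-formal input is the identification of $\cC_\s$ as the $\omega = 1$ Mackey summand; everything else is routine bookkeeping with tensor products and the freeness supplied by Proposition \ref{freeness}.
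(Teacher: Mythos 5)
Your proof is correct, and it is exactly the ``straightforward descent argument'' that the paper invokes without spelling out: all the ingredients you use (freeness of $\underline{\cH}$ over $\cH_\s$ from Proposition \ref{freeness}, the identification $\underline{\cC}\cong\underline{\cH}\otimes_{\cH_\s}\cC_\s$ from Lemma \ref{univmodres}, the induced isomorphism of Corollary \ref{easyisom}, and $\cC_\s$ sitting as a direct summand of $\underline{\cC}|_{\cH_\s}$) are precisely what the paper has just established and refers to. Reducing everything to the single equivalence ``$\cC_\s$ over $\cH_\s$ versus $\underline{\cC}$ over $\underline{\cH}$'' for a general admissible $\cZ$ and then specializing to $\cZ=\{1\}$ is a clean way to organize the three-way equivalence, and your identification of $\cC_\s$ as the $\omega=1$ summand could equally be replaced by citing Corollary \ref{dirsumdecomp} directly ($\cH_\s$ is a direct summand of $\underline{\cH}$ as an $\cH_\s$-bimodule, so tensoring with $\cC_\s$ exhibits $\cC_\s$ as a summand of $\underline{\cC}|_{\cH_\s}$).
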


\begin{proof}
Using the fact that $\cH$ and $\underline{\cH}$ are free over $\cH_\s$, this follows from a straightforward descent argument.  
%
%Base extension preserves both flatness and projectivity.  For the reverse direction, let $\mm$ be a left $\cH_\s$-module.  As a left $\cH_\s$-module, we have $\cH\otimes_{\cH_\s}\mm \cong \bigoplus_{\omega\in\cR}\T_\omega\otimes\mm \cong \bigoplus_{\omega\in\cR}\mm^\omega$, where $\mm^\omega$ is the left $\cH_\s$-module $\mm$ with action twisted by the automorphism $\T_w\longmapsto \T_{\omega^{-1}w\omega}$.  
%
%Assume now that $\cH\otimes_{\cH_\s}\mm$ is flat.  Since $\cH$ is flat over $\cH_\s$, we have that $\cH\otimes_{\cH_\s}\mm$ is flat as a left $\cH_\s$-module.  Since a direct sum is flat if and only if each constituent is flat, we conclude that $\mm$ is flat as a left $\cH_\s$-module.  
%
%Assume now $\cH\otimes_{\cH_\s}\mm$ is projective, so that there exists a left $\cH$-module $\mathfrak{N}$ such that $(\cH\otimes_{\cH_\s}\mm)\oplus \mathfrak{N} \cong \cH^{\oplus J}$ for some index set $J$.  Restricting to $\cH_\s$, we get $(\bigoplus_{\omega\in\cR}\mm^\omega)\oplus\mathfrak{N}|_{\cH_\s} \cong (\cH_\s^{\oplus \cR})^{\oplus J}$; since $\mm$ is a direct summand of a free $\cH_\s$-module, projectivity follows.  
\end{proof}

\begin{cor}\label{flatcor}
Suppose that $R = \fpb$.  Let $\bG = \mathbf{GL}_n$, so that $G = \textnormal{GL}_n(F)$ and $G_\s = \textnormal{SL}_n(F)$, and let $\cZ$ denote the subgroup of $G$ consisting of central elements whose entries are powers of $\varpi$.   
\begin{enumerate}
 \item Let $n = 2$.  Then $\cC_\s$ is flat if and only if $q = p$, in which case it is projective and faithfully flat.  
 \item Let $n = 3$ and assume $q = p > 2$.  Then $\cC_\s$ is not flat.
\end{enumerate}
\end{cor}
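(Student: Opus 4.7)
The plan is to reduce everything to known flatness/projectivity results for $\cC$ over $\cH$ for the groups $\textnormal{GL}_2(F)$ and $\textnormal{GL}_3(\qp)$, via Corollary~\ref{cflatiffcsflat}, which says that $\cC_\s$ is flat (resp.\ projective) over $\cH_\s$ if and only if $\cC$ is flat (resp.\ projective) over $\cH$, if and only if $\underline{\cC}$ is flat (resp.\ projective) over $\underline{\cH}$. Thus the task becomes invoking and slightly adapting the corresponding statements on the $\textnormal{GL}_n$ side.

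For part~(1) with $q=p$, I would invoke Ollivier's main theorem in \cite{Oll09}, which shows that for $\textnormal{GL}_2(\qp)$ the functor $\cI$ induces an equivalence $\mathfrak{Rep}_{\fpb}^{I(1)}(\textnormal{GL}_2(\qp))\simeq \mathfrak{Mod}\textrm{-}\cH$, making $\cC$ a projective generator of the source category and hence faithfully flat over $\cH$. The argument extends uniformly to any $F$ with residue field of size $p$, since the essential input is the representation theory of $\textnormal{GL}_2(\fp)$ in characteristic $p$, which depends only on $q$. Corollary~\ref{cflatiffcsflat} then transports projectivity to $\cC_\s$. For the converse, when $q>p$, I would use the enlarged supply of supersingular $\cH$-modules guaranteed by \cite{BP12} and work of Ollivier to exhibit a simple $\cH$-module $\MM$ whose associated $G$-representation $\MM\otimes_\cH \cC$ fails to be nonzero, or more precisely to exhibit a short exact sequence of $\cH$-modules not preserved under $-\otimes_\cH\cC$; then Corollary~\ref{cflatiffcsflat} transfers this failure to $\cC_\s$.

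For part~(2), I would cite Ollivier's non-flatness result for $\cC$ over $\cH$ for $\textnormal{GL}_3(\qp)$ with $p>2$. The typical mechanism is to exhibit an extension of $\cH$-modules, involving a supersingular character together with a principal-series-like piece, whose tensor product with $\cC$ collapses; the hypothesis $p>2$ is needed so that the relevant quadratic relations in Theorem~\ref{strthm} produce the required obstruction. Corollary~\ref{cflatiffcsflat} then yields the non-flatness of $\cC_\s$.

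The main obstacle I anticipate is the faithful flatness claim in part~(1). Projectivity follows immediately from Corollary~\ref{cflatiffcsflat}, but to conclude faithful flatness of $\cC_\s$ I must rule out that some simple $\cH_\s$-module is killed by $-\otimes_{\cH_\s}\cC_\s$. For this I would use Proposition~\ref{freeness} to write $\underline{\cH}=\bigoplus_{\omega\in\cR}\cH_\s\,\ol{\T_\omega}$ as a free $\cH_\s$-module, lift an arbitrary simple $\cH_\s$-module to an $\underline{\cH}$-module by extension of scalars, and then invoke faithful flatness of $\underline{\cC}$ over $\underline{\cH}$ together with Corollary~\ref{easyisom} to conclude that the original module does not vanish under tensoring with $\cC_\s$.
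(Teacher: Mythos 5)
Your reduction to the $\textnormal{GL}_n$ side via Corollary~\ref{cflatiffcsflat} is the right move and matches the paper's strategy, but the input you then invoke is the wrong one, and this is a real gap for part~(1). You cite Ollivier's equivalence-of-categories theorem from~\cite{Oll09}; that result is proved \emph{only for $F=\qp$}, and it does \emph{not} ``extend uniformly to any $F$ with residue field of size $p$''. The paper's own Corollary~\ref{equivcor}(4) records exactly this failure: for $F=\fp((T))$ with $p>2$ the functor of invariants is not an equivalence, even though $q=p$ and $\cC_\s$ is projective and faithfully flat there. So the equivalence is strictly stronger than what you need, fails in cases where the statement of Corollary~\ref{flatcor}(1) is asserted, and the implication in the direction you want (equivalence $\Rightarrow$ faithful flatness) is not available outside $F=\qp$. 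The correct input is Ollivier's \emph{flatness} theorem, Th\'eor\`eme~1 of~\cite{Oll07}, which directly states the criterion ``$\cC$ is flat over $\cH$ if and only if $q=p$'' (with projectivity and faithful flatness when it holds) for $\textnormal{GL}_2(F)$ over any nonarchimedean $F$ of residual characteristic $p$. Your ``only if'' direction via~\cite{BP12} is similarly misdirected --- that paper concerns representations rather than pro-$p$-universal-module flatness --- whereas~\cite{Oll07} already contains both directions.

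For part~(2), the mechanism you sketch is plausible but the attribution is off: the relevant non-flatness statement is Proposition~7.9 of~\cite{OS11} (Ollivier--S\'echerre), not a solo result of Ollivier, and it applies under the hypothesis $q=p>2$, not only for $F=\qp$. Finally, your descent argument for faithful flatness is essentially sound (and it is indeed needed, since Corollary~\ref{cflatiffcsflat} as stated only explicitly transfers flatness and projectivity): one does need the additional observation that the $\omega$-twists $(\mm)^\omega$ appearing in $\MM|_{\cH_\s}$ all have the same vanishing behaviour against $\cC_\s$, which follows because conjugation by $\omega$ is induced by an automorphism of $G_\s$ preserving $I_\s(1)$ and hence $\cC_\s$. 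But this is only useful once you have the correct $\textnormal{GL}_n$-side input from~\cite{Oll07}.
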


\begin{proof}
 Using the previous corollary, part (1) follows from \cite{Oll07}, Th\'{e}or\`{e}me 1, while part (2) follows from \cite{OS11}, Proposition 7.9.  
% 
% Faithful flatness requires some comment.  By \cite{Oll07}, when $q = p$ we have that $\underline{\cH}$ is a direct summand of $\underline{\cC}$, say $\underline{\cC}\cong \underline{\cH}\oplus \underline{\MM}$.  Since $\underline{\cC}$ is projective, it is a direct summand of a free module, and the same being true of $\underline{\MM}$ implies that $\underline{\MM}$ is projective.  Now, we have an exact sequence
% $$0\longrightarrow \cH_\s\longrightarrow \cC_\s\longrightarrow \cC_\s/\cH_\s\longrightarrow 0,$$
% which, upon tensoring with $\underline{\cH}$, gives
% $$0\longrightarrow \underline{\cH}\longrightarrow \underline{\cC}\longrightarrow \underline{\cH}\otimes_{\cH_\s}(\cC_\s/\cH_\s)\longrightarrow 0.$$
% Therefore $\underline{\MM}\cong \underline{\cH}\otimes_{\cH_\s}(\cC_\s/\cH_\s)$ is projective, and using the descent argument above, we get that $\cC_\s/\cH_\s$ is projective.  Therefore, the original exact sequence splits, and $\cH_\s$ is a direct summand of $\cC_\s$.  Proceeding as in \cite{Oll07}, we get faithful flatness of $\cC_\s$.  
\end{proof}

\subsection{Iwahori--Hecke Algebras}
We also have a variant of the above setup.  Consider the universal module $\cC'_\bullet := \textnormal{c-ind}_{I_\bullet}^{G_\bullet}(1)$, and its endomorphism algebra $\cH'_\bullet := \textnormal{End}_{G_\bullet}(\cC'_\bullet)$, called the \emph{Iwahori-Hecke algebra}.  Its structure is well known (cf. \cite{Vig05}), and the proofs above apply mutatis mutandis to the Iwahori case.  We remark that some of the results relating $\cH_\s'$ and $\cH'$ have also been obtained by Abdellatif in \cite{Ab11} (for $\bG = \mathbf{GL}_n$).  Translating Corollary \ref{flatcor} to the present situation, we obtain the following.

\begin{cor}
Suppose $R = \fpb$, and let $\bG = \mathbf{GL}_2$, so that $G = \textnormal{GL}_2(F)$ and $G_\s = \textnormal{SL}_2(F)$.  Then $\cC'_\s$ is projective and faithfully flat.  
\end{cor}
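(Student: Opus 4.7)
The approach is to carry out the analogue of the proof of Corollary \ref{flatcor}(1) in the Iwahori-Hecke setting. The excerpt itself asserts that ``the proofs above apply mutatis mutandis,'' which I read as licence to replicate each of the structural results of Section \ref{algs} --- Lemma \ref{length0}, Proposition \ref{embedding}, Proposition \ref{freeness}, Lemma \ref{univmodres}, Corollaries \ref{dirsumdecomp}, \ref{easyisom}, and \ref{cflatiffcsflat} --- with $\cH_\bullet$, $\cC_\bullet$, $I_\bullet(1)$, $W_\bullet(1)$ replaced by $\cH'_\bullet$, $\cC'_\bullet$, $I_\bullet$, $W_\bullet$ respectively. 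The only inputs those proofs use are the braid and quadratic relations for $\cH_\bullet$ (which hold for $\cH'_\bullet$), the length-preserving injection $W_\s \hookrightarrow W$ with length-$0$ coset representatives for $W_\s\backslash W$ (Lemma \ref{sequalsaff}), and the Iwahori and Mackey decompositions; all of these have immediate Iwahori-level analogues.

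The Iwahori-level version of Corollary \ref{cflatiffcsflat} then gives that $\cC'_\s$ is projective (respectively flat) over $\cH'_\s$ if and only if $\cC'$ is projective (respectively flat) over $\cH'$. I would also verify that the same equivalence holds for \emph{faithful} flatness: the Iwahori analogue of Proposition \ref{freeness} shows $\cH'$ is free, hence faithfully flat, as an $\cH'_\s$-module, so base-changing a nonzero right $\cH'_\s$-module $\MM$ to a right $\cH'$-module $\MM \otimes_{\cH'_\s} \cH'$ preserves non-vanishing, and the Iwahori analogue of Corollary \ref{easyisom} identifies the resulting tensor product over $\cH'$ with $\cC'$ with the original tensor product $\MM \otimes_{\cH'_\s} \cC'_\s$; conversely, the direct-summand decomposition of Corollary \ref{dirsumdecomp} (translated to the Iwahori setting) exhibits $\cC'_\s$ as a retract of $\cC'|_{\cH'_\s}$.

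Once these reductions are in place, the conclusion follows from the known fact that, for $\bG = \mathbf{GL}_2$ and $R = \fpb$, the universal module $\cC'$ is projective and faithfully flat over the Iwahori-Hecke algebra $\cH'$. Unlike the pro-$p$-Iwahori case of Corollary \ref{flatcor}(1), no restriction on the residue cardinality $q$ is needed: the obstruction that forces $q = p$ in \cite{Oll07} arises from representations of $T(k)$ inside the pro-$p$ radical of $I$ and disappears at the Iwahori level, so the result is part of the classical picture for $\textnormal{GL}_2(F)$ (cf.\ \cite{Vig05}, \cite{Oll07}).

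The main obstacle is less one of substance than of reference-hunting: one must be sure that the Iwahori-level projectivity and faithful flatness of $\cC'$ over $\cH'$ for $\textnormal{GL}_2(F)$ is available without restriction on $q$, and that the Mackey/length bookkeeping needed to transfer Lemma \ref{univmodres} and Corollary \ref{easyisom} to the Iwahori double cosets $I\backslash G/G_\s$ really is routine. No new ideas beyond those already in Section \ref{algs} should be required.
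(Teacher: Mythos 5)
Your proposal is correct and follows essentially the same route as the paper: translate the structural results of Section \ref{algs} to the Iwahori level (the paper's ``mutatis mutandis'' remark), reduce projectivity and faithful flatness of $\cC'_\s$ over $\cH'_\s$ to the corresponding statements for $\cC'$ over $\cH'$, and then quote Ollivier's Th\'eor\`emes 1 and 2 of \cite{Oll07} for $\textnormal{GL}_2(F)$. Your extra paragraph spelling out why faithful flatness descends along the free extension $\cH'_\s \hookrightarrow \cH'$ is a welcome elaboration of a point the paper leaves implicit, but it is not a different method.
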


\begin{proof}
We once again use \cite{Oll07}, Th\'{e}or\`{e}mes 1 and 2.
\end{proof}

\section{Equivalence of Categories between $G_\s$-representations and $\cH_\s$-modules}\label{equivs}

We assume from this point onwards that $R = \fpb$ and $\bG = \mathbf{GL}_n$ with $n\geq 2$, so that $\bG_\s = \mathbf{SL}_n, G = \textnormal{GL}_n(F)$, and $G_\s = \textnormal{SL}_n(F)$.  We take $\bT$ to be the diagonal maximal torus, $I$ the subgroup of $G$ with entries in $\oo$ which are upper-triangular modulo $\varpi$, and $I(1)$ those elements in $I$ which are unipotent modulo $\varpi$.  We take $\cZ$ to be the central subgroup consisting of diagonal matrices whose entries are a power of $\varpi$.

Let $\bullet$ denote either the empty symbol or $\s$.  Let $\pi$ be a smooth $\fpb$-representation of the group $G_\bullet$; Frobenius Reciprocity gives 
$$\pi^{I_\bullet(1)}\cong \textrm{Hom}_{I_\bullet(1)}(1,\pi|_{I_\bullet(1)})\cong \textnormal{Hom}_{G_\bullet}(\cC_{\bullet},\pi).$$ 
The algebra $\cH_\bullet$ has a natural right action on $\textrm{Hom}_{G_\bullet}(\cC_\bullet,\pi)$ by pre-composition, which induces a right action on $\pi^{I_\bullet(1)}$.  In this way, we obtain the functor of $I_\bullet(1)$-invariants
\begin{center}
\begin{tabular}{rrcl}
$\mathcal{I}_\bullet:$ & $\mathfrak{Rep}_{\fpb}(G_\bullet)$ & $\longrightarrow$ & $\mathfrak{Mod}-\cH_\bullet$\\
 & $\pi$ & $\longmapsto$ & $\pi^{I_\bullet(1)}$
\end{tabular}
\end{center}
from the category of smooth $\fpb$-representations of $G_\bullet$ to the category of right $\cH_\bullet$-modules.  

Let $\mathfrak{Rep}_{\fpb}^{I_\bullet(1)}(G_\bullet)$ denote the full subcategory of $\mathfrak{Rep}_{\fpb}(G_\bullet)$ of objects generated by their space of $I_\bullet(1)$-invariants.  We continue to denote by $\mathcal{I}_\bullet$ the functor above restricted to the subcategory $\mathfrak{Rep}_{\fpb}^{I_\bullet(1)}(G_\bullet)$.  Lemma 3 Part (1) of \cite{BL94} implies that this functor is faithful.

Given a right $\cH_\bullet$-module $\mm$, we may consider the $G_\bullet$-representation $\mm\otimes_{\cH_\bullet}\cC_\bullet$, with the action of $G_\bullet$ given on the right tensor factor.  We thus obtain a functor
\begin{center}
\begin{tabular}{rrcl}
$\mathcal{T}_\bullet:$ &  $\mathfrak{Mod}-\cH_\bullet$ & $\longrightarrow$ & $\mathfrak{Rep}_{\fpb}^{I_\bullet(1)}(G_\bullet)$\\
 & $\mm$ & $\longmapsto$ & $\mm\otimes_{\cH_\bullet}\cC_\bullet$.  
\end{tabular}
\end{center}
The functors $\mathcal{I}_\bullet$ and $\mathcal{T}_\bullet$ are adjoint to each other:
$$\textnormal{Hom}_{\mathfrak{Rep}_{\fpb}^{I_\bullet(1)}(G_\bullet)}(\mathcal{T}_\bullet(\mm),\pi) \cong \textnormal{Hom}_{\mathfrak{Mod}-\cH_\bullet}(\mm,\mathcal{I}_\bullet(\pi)).$$

We furthermore let $\mathfrak{Rep}_{\fpb}^{I(1)}(G)_{\varpi = 1}$ denote the full subcategory of $\mathfrak{Rep}_{\fpb}^{I(1)}(G)$ of representations on which the central element 
$$\begin{pmatrix}\varpi & & \\ & \ddots & \\ & & \varpi\end{pmatrix}$$
acts trivially.  Taking $I(1)$-invariants of a representation in $\mathfrak{Rep}_{\fpb}^{I(1)}(G)_{\varpi = 1}$ yields a right $\cH$-module on which the element $\T_{\textnormal{diag}(\varpi,\ldots,\varpi)}$ acts trivially.  Therefore, we obtain a functor 
\begin{center}
\begin{tabular}{rrcl}
$\underline{\mathcal{I}}:$ & $\mathfrak{Rep}_{\fpb}^{I(1)}(G)_{\varpi = 1}$ & $\longrightarrow$ & $\mathfrak{Mod}-\underline{\cH}$\\
 & $\pi$ & $\longmapsto$ & $\pi^{I(1)}$.
\end{tabular}
\end{center}
The adjoint functor $\underline{\mathcal{T}}$ is given by
\begin{center}
\begin{tabular}{rrcl}
$\underline{\mathcal{T}}:$ &  $\mathfrak{Mod}-\underline{\cH}$ & $\longrightarrow$ & $\mathfrak{Rep}_{\fpb}^{I(1)}(G)_{\varpi = 1}$\\
 & $\mm$ & $\longmapsto$ & $\mm\otimes_{\underline{\cH}}\underline{\cC}$.  
\end{tabular}
\end{center}

It is natural to ask whether the functors defined above induce equivalences of categories.  This shall be the main goal of this section.

\begin{lem}\label{I1Is1invts}
 Assume that $(n,q) = 1$ and let $\pi\in \mathfrak{Rep}_{\fpb}^{I(1)}(G)$.  We then have an equality of vector spaces $\pi^{I_\s(1)} = \pi^{I(1)}$.  
\end{lem}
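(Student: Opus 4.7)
The plan is to reduce the equality $\pi^{I_\s(1)} = \pi^{I(1)}$ to showing that the central pro-$p$ subgroup
$$Z_1 := \{cI_n : c\in 1+\pp\}$$
acts trivially on all of $\pi$. The inclusion $\pi^{I(1)}\subseteq \pi^{I_\s(1)}$ is immediate from $I_\s(1)\subseteq I(1)$, so only the reverse inclusion requires work.

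For the reverse inclusion, I would first analyze the determinant map restricted to $I(1)$. An element of $I(1)$ has entries in $\oo$, diagonal entries in $1+\pp$, and strictly upper (resp.~lower) triangular entries in $\oo$ (resp.~$\pp$) modulo $\varpi$; in particular $\det(I(1))\subseteq 1+\pp$. Conversely, diagonal matrices $\textnormal{diag}(u,1,\ldots,1)$ with $u\in 1+\pp$ lie in $I(1)$, so $\det: I(1)\twoheadrightarrow 1+\pp$ is surjective with kernel exactly $I_\s(1)$. Now the key numerical input enters: since $1+\pp$ is a pro-$p$ group and $(n,q)=1$ forces $(n,p)=1$, the $n$-th power map is an automorphism of $1+\pp$. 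Hence, given any $g\in I(1)$ with $\det(g) = d$, there is a unique $c\in 1+\pp$ with $c^n=d$, yielding a decomposition
$$g = (cI_n)\cdot\bigl((cI_n)^{-1}g\bigr)$$
with $cI_n\in Z_1$ and $(cI_n)^{-1}g\in I_\s(1)$.

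Next I would show that $Z_1$ acts trivially on all of $\pi$ (not just on $\pi^{I(1)}$). By hypothesis $\pi\in\mathfrak{Rep}_{\fpb}^{I(1)}(G)$, so $\pi$ is spanned by elements of the form $h\cdot u$ with $h\in G$ and $u\in \pi^{I(1)}$. For any $z\in Z_1$, the element $z$ is central in $G$ and lies in $I(1)$, so
$$z\cdot (h u) = h\cdot (z u) = h\cdot u,$$
which shows $z$ acts trivially on the generating set, hence on all of $\pi$.

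Combining these two steps finishes the proof: given $v\in \pi^{I_\s(1)}$ and $g\in I(1)$, write $g = z g'$ with $z\in Z_1$ and $g'\in I_\s(1)$ as above; then
$$g\cdot v = z\cdot(g'\cdot v) = z\cdot v = v,$$
so $v\in \pi^{I(1)}$. The only genuine obstacle is the arithmetic observation that $(n,p)=1$ makes $x\mapsto x^n$ a bijection on $1+\pp$; everything else is a formal manipulation using centrality of the scalar matrices and the generating hypothesis on $\pi$.
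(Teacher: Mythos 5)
Your proposal is correct and takes essentially the same route as the paper: both use the short exact sequence $1\to I_\s(1)\to I(1)\stackrel{\det}{\to}1+\pp\to 1$, the fact that $(n,p)=1$ makes the $n$-th power map an automorphism of $1+\pp$ (giving a section via central scalar matrices and hence the decomposition $I(1)\cong I_\s(1)\times(I(1)\cap Z)$), and the observation that $I(1)\cap Z$ acts trivially on $\pi^{I(1)}$ and therefore, by centrality and the generating hypothesis, on all of $\pi$. Your $Z_1$ is exactly $I(1)\cap Z$, and the element-by-element factorization $g = z g'$ is just an explicit version of the paper's direct product decomposition.
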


\begin{proof}
Consider the short exact sequence 
$$1\longrightarrow I_\s(1)\longrightarrow I(1)\stackrel{\det}{\longrightarrow} 1 + \pp \longrightarrow 1.$$
Since $n$ and $p$ are coprime, the map $x\longmapsto x^n$ is an automorphism of $1 + \pp$.  Hence, the map
$$x\longmapsto\begin{pmatrix}x^{1/n} &  & \\  & \ddots & \\ & & x^{1/n} \end{pmatrix}$$
gives an isomorphism $1 + \pp \cong I(1)\cap Z$, and a section to the surjection $I(1)\stackrel{\det}{\longrightarrow} 1 + \pp$.  Thus, we have a decomposition
$$I(1)\cong I_\s(1)\times (I(1)\cap Z).$$

Now, since $I(1)\cap Z$ is contained in $I(1)$, it acts trivially on $\pi^{I(1)}$.  As $\pi$ is generated by its space of $I(1)$-invariants, $I(1)\cap Z$ acts trivially on the whole of $\pi$.  Hence, by the above decomposition, we get $\pi^{I_\s(1)} \subset \pi^{I(1)}$.   The opposite inclusion is obvious.  
\end{proof}

\begin{remark}
 By considering the action of $\cH$ on $\pi^{I(1)}$ and restricting to $\cH_\s$, the above lemma actually yields an isomorphism of $\cH_\s$-modules 
\begin{equation}\label{HandHsmods}
\pi^{I_\s(1)}\cong \pi^{I(1)}|_{\cH_\s}.
\end{equation}
\end{remark}

\begin{thm}\label{equiv}
 Assume that $(n,q) = 1$.  The functors $\mathcal{I}$ and $\mathcal{T}$ induce an equivalence of categories between $\mathfrak{Rep}_{\fpb}^{I(1)}(G)$ and $\mathfrak{Mod}-\cH$ if and only if $\mathcal{I}_\s$ and $\mathcal{T}_\s$ induce and equivalence of categories between $\mathfrak{Rep}_{\fpb}^{I_\s(1)}(G_\s)$ and $\mathfrak{Mod}-\cH_\s$, if and only if $\underline{\mathcal{I}}$ and $\underline{\mathcal{T}}$ induce an equivalence of categories between $\mathfrak{Rep}_{\fpb}^{I(1)}(G)_{\varpi = 1}$ and $\mathfrak{Mod}-\underline{\cH}$.  
\end{thm}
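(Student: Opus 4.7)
The plan is to chain the three equivalences through the middle one $(\underline{\cI},\underline{\cT})$, using the compatibility results developed in Section \ref{algs}.

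For the equivalence $(\underline{\cI},\underline{\cT})\Leftrightarrow(\cI_\s,\cT_\s)$: I would first check that for $\pi\in\mathfrak{Rep}_{\fpb}^{I(1)}(G)_{\varpi=1}$ the restriction $\pi|_{G_\s}$ lies in $\mathfrak{Rep}_{\fpb}^{I_\s(1)}(G_\s)$, by combining the decomposition $G=\bigsqcup_{\omega\in\cR}\cZ I(1)n_\omega G_\s$ (Lemma \ref{WGcoset}) with the fact that length-zero elements normalize $I(1)$ and that both $\cZ$ and $I(1)$ act trivially on $\pi^{I(1)}$. Lemma \ref{I1Is1invts} then supplies $\underline{\cI}(\pi)|_{\cH_\s}\cong\cI_\s(\pi|_{G_\s})$, while Corollary \ref{easyisom} supplies $\underline{\cT}(\mm)|_{G_\s}\cong\cT_\s(\mm|_{\cH_\s})$. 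These compatibilities show that the units and counits of the two adjunctions coincide as $\fpb$-linear maps, so one is an isomorphism iff the other is. To transfer the equivalence across the two sides I would use Proposition \ref{freeness} to realize any $\mm\in\mathfrak{Mod}-\cH_\s$ as an $\cH_\s$-direct summand of $\mm\otimes_{\cH_\s}\underline{\cH}$, and dually realize any $\pi'\in\mathfrak{Rep}_{\fpb}^{I_\s(1)}(G_\s)$ as a $G_\s$-direct summand of $(\textnormal{c-ind}_{\cZ G_\s}^G\pi')|_{G_\s}$, with the compactly induced representation lying in $\mathfrak{Rep}_{\fpb}^{I(1)}(G)_{\varpi=1}$ (by Mackey and Lemma \ref{WGcoset}).

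For the equivalence $(\cI,\cT)\Leftrightarrow(\underline{\cI},\underline{\cT})$, the forward implication is immediate: $\mathfrak{Rep}_{\fpb}^{I(1)}(G)_{\varpi=1}$ and $\mathfrak{Mod}-\underline{\cH}$ are full subcategories of $\mathfrak{Rep}_{\fpb}^{I(1)}(G)$ and $\mathfrak{Mod}-\cH$ respectively, and the restrictions of $\cI,\cT$ coincide with $\underline{\cI},\underline{\cT}$ on these. For the converse I would use character twists by unramified characters $\eta\circ\det$ of $G$ (trivial on $I(1)$) to reduce to the $\varpi=1$ subcategory: since $\T_\varpi\in\cH$ is central and invertible, for any $\cH$-module on which $\T_\varpi$ acts as a scalar $c$ the twist by $\eta^{-1}$ with $\eta(\varpi)=c^{1/n}$ lands in $\mathfrak{Mod}-\underline{\cH}$, and the twist defines an auto-equivalence of both sides of the adjunction. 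The general case is obtained by first decomposing an arbitrary object into generalized eigenspaces of $\T_\varpi$ and then filtering each by powers of $\T_\varpi-c$; successive quotients have $\T_\varpi$ acting as a scalar, and inducting over the filtration (using exactness of $\cT$ from flatness of $\cC$, together with left exactness of $\cI$) extends the equivalence.

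The hardest step, I expect, is precisely this last reduction for general $\cH$-modules: the action of $\T_\varpi$ on an arbitrary object need not be locally finite, so the generalized-eigenspace and filtration argument demands care in checking that the filtrations are functorial and compatible with the adjoint functors at each step. The remaining parts---the compatibility isomorphisms of Section \ref{algs} and the summand-based transfer between the $G$- and $G_\s$-sides---are essentially formal given the results already established.
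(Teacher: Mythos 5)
Your treatment of the equivalence $(\underline{\cI},\underline{\cT})\Leftrightarrow(\cI_\s,\cT_\s)$ is in the spirit of the paper: both rely on Lemma \ref{I1Is1invts}, Corollary \ref{easyisom}, and the direct-summand trick coming from freeness of $\underline{\cH}$ (resp.\ $\cH$) over $\cH_\s$. There the paper is a little more economical on the representation side: rather than compactly inducing $\pi'$ from $\cZ G_\s$ up to $G$, it shows that the counit $\cT_\s\circ\cI_\s(\pi)\to\pi$ is surjective (since $\pi$ is generated by invariants), and that its kernel has trivial invariants by the already-established unit isomorphism, whence it vanishes by faithfulness of $\cI_\s$. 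So no induction on the representation side is needed; this also sidesteps the circularity you would otherwise have to resolve in checking that $\textnormal{c-ind}_{\cZ G_\s}^G\pi'$ actually lies in $\mathfrak{Rep}_{\fpb}^{I(1)}(G)_{\varpi=1}$.

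The real gap is in your second link, $(\underline{\cI},\underline{\cT})\Rightarrow(\cI,\cT)$, via twists by unramified characters. Two things go wrong. First, the generalized-eigenspace decomposition with respect to $\T_\varpi$ does not exist for arbitrary right $\cH$-modules: $\T_\varpi$ is central and invertible of infinite order, and already on $\cH$ itself (as a module over itself) it satisfies no polynomial relation, so the decomposition into finitely many generalized eigenspaces fails for any module that is not locally finite under $\T_\varpi$. Second, your inductive step along the filtration invokes exactness of $\cT$, hence flatness of $\cC$ over $\cH$ — but this is precisely not available: Corollary \ref{flatcor} shows $\cC$ is flat only in very limited cases, and the theorem makes no such hypothesis. (If $(\cI,\cT)$ were already known to be an equivalence then $\cT$ would automatically be exact, but that is what you are trying to prove.)

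The paper avoids both issues by not chaining through $\underline{\cH}$. Since $\underline{\cH}$ is a quotient of $\cH$ rather than a subalgebra, there is no restriction-of-scalars functor from $\cH$-modules to $\underline{\cH}$-modules, so the descent technique used for $\cH_\s\hookrightarrow\cH$ has no analogue. Instead the paper proves $(\cI,\cT)\Leftrightarrow(\cI_\s,\cT_\s)$ directly: given $\MM\in\mathfrak{Mod}-\cH$, the unit $\MM\to\cI\circ\cT(\MM)$, viewed as a map of $\fpb$-vector spaces, identifies via equation \eqref{HandHsmods} and Corollary \ref{easyisom} with the unit $\MM|_{\cH_\s}\to\cI_\s\circ\cT_\s(\MM|_{\cH_\s})$, which is a bijection once $(\cI_\s,\cT_\s)$ is an equivalence; bijectivity of the underlying linear map then gives bijectivity of the $\cH$-module map with no twisting, no filtration, and no flatness hypothesis. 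The equivalence $(\underline{\cI},\underline{\cT})\Leftrightarrow(\cI_\s,\cT_\s)$ is then established by the same method using the inclusion $\cH_\s\hookrightarrow\underline{\cH}$, giving all three equivalences pairwise. You should replace the twist/filtration argument with a direct proof of $(\cI,\cT)\Leftrightarrow(\cI_\s,\cT_\s)$ along these lines.
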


\begin{proof}
The proofs of both equivalences are similar.  We prove the first statement.  

 ($\Longrightarrow$) Assume first that $\mathcal{I}$ and $\mathcal{T}$ induce an equivalence of categories, and let $\mm$ be a right $\cH_\s$-module.  We will show that the natural map
\begin{center}
 \begin{tabular}{rcl}
  $\mm$ & $\longrightarrow$ & $\mathcal{I}_\s\circ\mathcal{T}_\s(\mm) = (\mm\otimes_{\cH_\s}\cC_\s)^{I_\s(1)}$\\
 $m$ & $\longmapsto$ & $m\otimes\mathbf{1}_{I_\s(1)}$
 \end{tabular}
\end{center}
is an isomorphism.  

Let $\mmm$ denote the induced $\cH$-module $\mm\otimes_{\cH_\s}\cH$.  Since $\mathcal{I}$ and $\mathcal{T}$ induce an equivalence, the homomorphism
\begin{center}
 \begin{tabular}{rcl}
  $\mmm$ & $\longrightarrow$ & $\mathcal{I}\circ\mathcal{T}(\mmm) = (\mmm\otimes_{\cH}\cC)^{I(1)}$\\
 $m$ & $\longmapsto$ & $m\otimes\mathbf{1}_{I(1)}$
 \end{tabular}
\end{center}
is bijective.  On restricting to $\cH_\s$, we get 
\begin{eqnarray*}
 \mmm|_{\cH_\s} & \cong & (\mmm\otimes_\cH\cC)^{I(1)}|_{\cH_\s}\\
 & \stackrel{\textnormal{eq.}~\eqref{HandHsmods}}{\cong} & (\mmm\otimes_{\cH}\cC|_{G_\s})^{I_\s(1)}\\
 & \stackrel{\textnormal{Cor.}~\ref{easyisom}}{\cong} & (\mmm|_{\cH_\s}\otimes_{\cH_\s}\cC_\s)^{I_\s(1)}.
\end{eqnarray*}
Since $\cH_\s$ is a direct summand of $\cH$, we have $\mmm|_{\cH_\s}\cong \mm\oplus\mm',$ where $\mm'$ is the right $\cH_\s$-module $\bigoplus_{\omega\in\cR, \omega\neq 1}\mm\otimes_{\cH_\s} \T_\omega$.  Therefore, the above isomorphisms give
$$\mm\oplus\mm' \cong (\mm\otimes_{\cH_\s}\cC_\s)^{I_\s(1)} \oplus (\mm'\otimes_{\cH_\s}\cC_\s)^{I_\s(1)};$$
since the image of $\mm$ (resp. $\mm'$) under the natural map $\mmm\longrightarrow (\mmm\otimes_\cH\cC)^{I(1)}$ must lie in the space $(\mm\otimes_{\cH_\s}\cC_\s)^{I_\s(1)}$ (resp. $(\mm'\otimes_{\cH_\s}\cC_\s)^{I_\s(1)}$), we conclude that 
$$\mm\cong (\mm\otimes_{\cH_\s}\cC_\s)^{I_\s(1)} = \mathcal{I}_{\s}\circ\mathcal{T}_\s(\mm).$$

Now let $\pi\in\mathfrak{Rep}_{\fpb}^{I_\s(1)}(G_\s)$, and consider the natural map
\begin{center}
 \begin{tabular}{rcl}
$\mathcal{T}_\s\circ\mathcal{I}_\s(\pi) = \pi^{I_\s(1)}\otimes_{\cH_\s}\cC_\s$ & $\longrightarrow$ & $\pi$\\
$v\otimes \mathbf{1}_{I_\s(1)g^{-1}}$ & $\longmapsto$ & $g.v$,
 \end{tabular}
\end{center}
where $v\in \pi^{I_\s(1)}$ and $g\in G_\s$.  Since $\pi$ is generated by its space of $I_\s(1)$-invariant vectors, this map is surjective.  Letting $\pi'$ denote its kernel, we obtain a short exact sequence
$$0\longrightarrow \pi'\longrightarrow \pi^{I_\s(1)}\otimes_{\cH_\s}\cC_\s \longrightarrow \pi \longrightarrow 0.$$
Taking $I_\s(1)$-invariants of this short exact sequence yields 
$$0\longrightarrow (\pi')^{I_\s(1)}\longrightarrow (\pi^{I_\s(1)}\otimes_{\cH_\s}\cC_\s)^{I_\s(1)} \longrightarrow \pi^{I_\s(1)},$$
and the statement just proved shows that the third arrow is an isomorphism.  Hence $(\pi')^{I_\s(1)} = 0$, and faithfulness of the functor $\mathcal{I}_\s$ shows $\pi' = 0$.  We conclude that 
$$\mathcal{T}_\s\circ\mathcal{I}_\s(\pi) = \pi^{I_\s(1)}\otimes_{\cH_\s}\cC_\s \cong \pi.$$

($\Longleftarrow$) Assume now that $\mathcal{I}_\s$ and $\mathcal{T}_\s$ induce an equivalence of categories, and let $\MM\in \mathfrak{Mod}-\cH$.  Consider the natural map
\begin{center}
 \begin{tabular}{rcl}
  $\MM$ & $\longrightarrow$ & $\mathcal{I}\circ\mathcal{T}(\MM) = (\MM\otimes_{\cH}\cC)^{I(1)}$\\
 $m$ & $\longmapsto$ & $m\otimes\mathbf{1}_{I(1)}$.
 \end{tabular}
\end{center}
We claim that this map is an isomorphism.  Indeed, if we restrict this morphism to $\cH_\s$, we obtain
\begin{eqnarray*}
 \MM|_{\cH_\s} & \longrightarrow  & (\MM\otimes_{\cH}\cC)^{I(1)}|_{\cH_\s}\\
 & & \stackrel{\textnormal{eq.}~\eqref{HandHsmods}}{\cong} (\MM\otimes_\cH\cC|_{G_\s})^{I_\s(1)}\\
 & & \stackrel{\textnormal{Cor.}~\ref{easyisom}}{\cong} (\MM|_{\cH_\s}\otimes_{\cH_\s}\cC_\s)^{I_\s(1)}.
\end{eqnarray*}
Since $\mathcal{I}_\s$ and $\mathcal{T}_\s$ induce an equivalence of categories, this map is an isomorphism, and we obtain
$$\MM \cong (\MM\otimes_{\cH}\cC)^{I(1)} = \mathcal{I}\circ\mathcal{T}(\MM).$$

Now let $\Pi\in\mathfrak{Rep}_{\fpb}^{I(1)}(G)$.  In order to show that $\mathcal{T}\circ\mathcal{I}$ is naturally isomorphic to $\textnormal{id}_{\mathfrak{Rep}_{\fpb}^{I(1)}(G)}$, we proceed exactly as in the proof of $\mathcal{T}_\s\circ\mathcal{I}_\s\simeq\textnormal{id}_{\mathfrak{Rep}_{\fpb}^{I_\s(1)}(G_\s)}$ above.  Hence, we conclude
$$\mathcal{T}\circ\mathcal{I}(\Pi) = \Pi^{I(1)}\otimes_{\cH}\cC \cong \Pi.$$
\end{proof}

\begin{cor}\label{equivcor}  
Let $\mathcal{I}:\mathfrak{Rep}_{\fpb}^{I(1)}(G)\longrightarrow\mathfrak{Mod}-\cH$ denote the functor of $I(1)$-invariants.
\begin{enumerate}
 \item The functor $\mathcal{I}$ induces an equivalence of categories for $n = 2$ and $F = \qp$ with $p > 2$.  
\end{enumerate}
Let $\mathcal{I}_\s:\mathfrak{Rep}_{\fpb}^{I_\s(1)}(G_\s)\longrightarrow\mathfrak{Mod}-\cH_\s$ denote the functor of $I_\s(1)$-invariants.
\begin{enumerate}\setcounter{enumi}{+1}
 \item The functor $\mathcal{I}_\s$ induces an equivalence of categories for $n = 2$ and $F = \qp$ with $p > 2$.  
 \item The functor $\mathcal{I}_\s$ \emph{does not} induce an equivalence of categories when $n = 2$ and $q > p > 2$.
 \item The functor $\mathcal{I}_\s$ \emph{does not} induce an equivalence of categories when $n = 2$ and $F = \mathbb{F}_p((T))$ with $p>2$.  
 \item The functor $\mathcal{I}_\s$ \emph{does not} induce an equivalence of categories when $n = 3$ and $q = p > 3$.
\end{enumerate}
\end{cor}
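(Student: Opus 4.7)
The plan is to deduce each of the five statements essentially immediately from Theorem~\ref{equiv}, combined with results of Ollivier \cite{Oll09} on the functor of pro-$p$-Iwahori invariants for $\textnormal{GL}_2(F)$, and with the non-flatness statement of Corollary~\ref{flatcor}. First I would verify that the hypothesis $(n,q) = 1$ of Theorem~\ref{equiv} is satisfied in every one of the five cases: for $n = 2$ with $p > 2$ we have $(2,q) = 1$ regardless of the ramification index or residue degree of $F$, and for $n = 3$ with $q = p > 3$ we have $(3,q) = 1$. This legitimises using Theorem~\ref{equiv} freely to transfer equivalence (or non-equivalence) statements between the $\textnormal{GL}_n$ side and the $\textnormal{SL}_n$ side.

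For part~(1), I would simply cite the equivalence theorem of Ollivier \cite{Oll09} for $\textnormal{GL}_2(\qp)$ when $p > 2$. Part~(2) is then immediate: apply Theorem~\ref{equiv} to part~(1). For parts~(3) and~(4), I would invoke the companion results of Ollivier showing that $\mathcal{I}$ does \emph{not} induce an equivalence for $\textnormal{GL}_2(F)$ when $F$ is either a proper unramified extension of $\qp$ (so that $q > p$) or the equal-characteristic local field $\mathbb{F}_p((T))$; Theorem~\ref{equiv} then transports these negative statements to the $\textnormal{SL}_2$ side.

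For part~(5), rather than first passing through $\textnormal{GL}_3$, I would argue directly from exactness. Any equivalence of categories implemented by the adjoint pair $(\mathcal{I}_\s, \mathcal{T}_\s)$ would in particular force $\mathcal{T}_\s = (-)\otimes_{\cH_\s}\cC_\s$ to be exact, which forces $\cC_\s$ to be flat as a left $\cH_\s$-module. But Corollary~\ref{flatcor}(2) asserts exactly the opposite when $n = 3$ and $q = p > 2$, a contradiction. Equivalently, one could route the argument through Theorem~\ref{equiv} to reduce to the corresponding non-flatness of $\cC$ over $\cH$ on the $\textnormal{GL}_3$ side coming from \cite{OS11}.

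The only step I expect to require genuine care is the bookkeeping of the precise form of Ollivier's negative results in \cite{Oll09}, so as to cover exactly the regimes demanded by~(3) and~(4). No new calculation is needed: the substantive work has already been carried out in Theorem~\ref{equiv} and in the cited sources, and the present corollary is primarily a matter of invoking them in the correct combination.
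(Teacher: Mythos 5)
Your proposal is correct and matches the paper's own argument: parts (1)--(4) follow from Th\'eor\`eme 1.3 of \cite{Oll09} together with Theorem~\ref{equiv}, and part (5) from Corollary~\ref{flatcor}. The one point you leave implicit---that an equivalence forces $\mathcal{T}_\s$ to preserve injections and hence $\cC_\s$ to be flat---uses that any nonzero smooth $\fpb$-representation of $G_\s$ has nonzero $I_\s(1)$-invariants (since $I_\s(1)$ is pro-$p$ and the coefficients have characteristic $p$), but the paper's proof is equally terse on this.
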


\begin{proof}
 Using Theorem \ref{equiv} above, parts (1) - (4) follow from Th\'eor\`eme 1.3 of \cite{Oll09}, and part (5) follows from Corollary \ref{flatcor}.  
\end{proof}

\section{$L$-Packets of $\cH_\s$-modules}\label{packets}

Once again, we let $\bullet$ denote either the empty symbol or $\s$ throughout.  We introduce some additional notation.  For $1\leq i \leq n - 1$, we define
$$n_i := \begin{pmatrix}1 & & & & & \\ & \ddots & & & & \\ & & \phantom{-}0 & 1 & & \\ & & -1 & 0 & & \\ & &  &  & \ddots & \\ & &  &  & & 1 \end{pmatrix}\begin{array}{c} \phantom{-} \\ \phantom{-} \\ i^{\textnormal{th}}~\textnormal{row} \\ (i + 1)^{\textnormal{th}}~\textnormal{row}\\ \phantom{-} \\ \phantom{-} \end{array}\quad\textnormal{and}\quad n_0 := \begin{pmatrix} & & & & -\varpi^{-1}\\ & 1 & & & \\ & & \ddots &  & \\ & &  &  1 & \\ \varpi & &  &  &  \end{pmatrix}.$$
If we choose for $x_0$ the hyperspecial vertex corresponding to the subgroup $\textnormal{GL}_n(\oo)$, then the set $S$ of affine reflections generating $W_\aff \cong W_\s$ is given by (the images of) the elements $n_i$, $0\leq i \leq n - 1$.  
Finally, we let
$$\omega := \begin{pmatrix}0 & 1 & & \\ & \ddots  & \ddots & \\  &  & \ddots & 1\\ \varpi &  & & 0 \end{pmatrix}.$$
The element $\omega$ satisfies 
$$\omega^{-1}n_i\omega = n_{i + 1}$$
(the index being considered modulo $n$), and is a generator for the group $\Omega$ of length $0$ elements of $W$.

\subsection{Supersingular Hecke Modules}  We recall the following definition of supersingular modules.

\begin{defn}[\cite{Vig05}, Definition 3]
 Let $\mm$ be a nonzero right $\cH_\bullet$-module.  Then $\mm$ is said to be \emph{supersingular} if the center of $\cH_\bullet$ acts by a character which is null.  
\end{defn}

For a more precise definition of a null character of the center of $\cH_\bullet$, we refer to Definition 2 (\emph{loc. cit.}).  In order to describe the supersingular modules more explicitly, we recall the classification of characters of $\cH_{\aff,\bullet}$.

Recall that to each $n_i\in S$, we have an associated coroot $\alpha_{n_i}^\vee:\bG_m\longrightarrow \bT_\s\subset \bT$.  Let $\lambda:T_\bullet(k)\longrightarrow \fpb^\times$ be a character, and set 
$$S_\lambda := \{n_i\in S:~\lambda\circ\alpha_{n_i}^\vee:k^\times\longrightarrow \fpb^\times ~\textnormal{is trivial}\}.$$

\begin{propn}[\emph{loc. cit.}, Proposition 2]
 The characters of $\cH_{\aff,\bullet}$ are parametrized by pairs $(\lambda,J)$, where $\lambda:T_\bullet(k)\longrightarrow \fpb^\times$ is a character and $J\subset S_\lambda$.  We denote the character associated to the pair $(\lambda,J)$ by $\chi_{\lambda,J}^\bullet$.  This character is defined by
\begin{center}
\begin{tabular}{ccccl}
 (1) & $\chi_{\lambda,J}^\bullet(\T_t)$ & $=$ & $\lambda(t)$ & for $t\in T_\bullet(k)$,\\
 (2) & $\chi_{\lambda,J}^\bullet(\T_{n_i})$ & $=$ & $0$ & if $n_i\not\in J$,\\
 (3) & $\chi_{\lambda,J}^\bullet(\T_{n_i})$ & $=$ & $-1$ & if $n_i\in J$.  
\end{tabular}
\end{center}
\end{propn}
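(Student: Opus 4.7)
The plan is to exploit the presentation of $\cH_{\aff,\bullet}$ by generators and relations, as extracted from Theorem~\ref{strthm} and the remark following Definition~\ref{defaff}: the generators are $\T_t$ for $t\in T_\bullet(k)$ and $\T_{n_s}$ for $s\in S$, subject to the braid relations (Theorem~\ref{strthm}(2)) and the quadratic relations (Theorem~\ref{strthm}(3)). A character $\chi:\cH_{\aff,\bullet}\longrightarrow\fpb$ is determined by its values on these generators, so the proposition reduces to checking which assignments define a well-defined algebra homomorphism.

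First, since elements of $T_\bullet(k)$ have length $0$, the braid relations give $\T_t\T_{t'}=\T_{tt'}$, so the subalgebra they span is $\fpb[T_\bullet(k)]$ and the restriction of $\chi$ is given by a character $\lambda:T_\bullet(k)\longrightarrow\fpb^\times$. Next, applying $\chi$ to the quadratic relation $(\T_{n_i})^2 = q\T_{n_i^2}+\T_{n_i}\tau_{n_i}^\bullet$ and using $q=0$ in $\fpb$ yields $\chi(\T_{n_i})^2 = \chi(\T_{n_i})\,\chi(\tau_{n_i}^\bullet)$. From the definition,
$$\chi(\tau_{n_i}^\bullet) = \sum_{a\in k^\times}(\lambda\circ\alpha_{n_i}^\vee)(a),$$
and the standard character-sum identity over $k^\times$ gives $\chi(\tau_{n_i}^\bullet)=q-1=-1$ in $\fpb$ when $\lambda\circ\alpha_{n_i}^\vee$ is trivial (i.e., $n_i\in S_\lambda$) and $0$ otherwise. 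Hence $\chi(\T_{n_i})$ is a root of $x(x+1)$ when $n_i\in S_\lambda$ and of $x^2$ otherwise. Setting $J:=\{n_i\in S_\lambda:\chi(\T_{n_i})=-1\}$ produces the pair $(\lambda,J)$ with $J\subset S_\lambda$ and recovers the claimed formulas (1)-(3).

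For the converse, one must verify that every pair $(\lambda,J)$ with $J\subset S_\lambda$ actually extends to a character, i.e.\ that all remaining relations hold. The nontrivial one is the interaction between $\T_{n_i}$ and $\T_t$: writing $n_i t = (n_i t n_i^{-1})\cdot n_i$, both decompositions being length-additive since $n_i t n_i^{-1}\in T_\bullet(k)$, the braid relations yield $\T_{n_i}\T_t = \T_{n_i tn_i^{-1}}\T_{n_i}$, which on $\chi$ gives $\chi(\T_{n_i})\bigl(\lambda(t)-\lambda(n_i t n_i^{-1})\bigr)=0$. This is automatic when $\chi(\T_{n_i})=0$, and when $\chi(\T_{n_i})=-1$ the hypothesis $n_i\in J\subset S_\lambda$ combined with the identity $n_i t n_i^{-1} = t\cdot \alpha_{n_i}^\vee(\alpha_{n_i}(t))^{-1}$ shows that triviality of $\lambda\circ\alpha_{n_i}^\vee$ is equivalent to $n_i$-invariance of $\lambda$. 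Finally, the braid relations among the $\T_{n_i}$'s (of the form $\T_{n_i}\T_{n_j}\T_{n_i}\cdots=\T_{n_j}\T_{n_i}\T_{n_j}\cdots$, with $m_{ij}$ factors on each side) impose scalar identities in $\{0,-1\}$ that are readily verified case-by-case on the parities of $m_{ij}$.

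The main obstacle is the last verification in the converse: showing that $n_i\in S_\lambda$ is exactly the condition needed to make the mixed relation $\T_{n_i}\T_t = \T_{n_i t n_i^{-1}}\T_{n_i}$ compatible with $\chi(\T_{n_i})=-1$. This requires a hands-on identification of the action of $n_i$ on $T_\bullet(k)$ by conjugation with the reflection through $\alpha_{n_i}^\vee$, which, while straightforward, is where all the bookkeeping for the root datum is concentrated.
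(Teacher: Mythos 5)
The paper does not actually supply a proof of this proposition: it is quoted verbatim from Vign\'eras's article \cite{Vig05}, Proposition~2, so there is no in-house argument to compare against. Your reconstruction by generators-and-relations is the natural one (and very close to what Vign\'eras does): with $q=0$ in $\fpb$, the quadratic relation forces $\chi(\T_{n_i})\bigl(\chi(\T_{n_i})-\chi(\tau_{n_i}^\bullet)\bigr)=0$, and the standard character-sum orthogonality over $k^\times$ gives $\chi(\tau_{n_i}^\bullet)=-1$ or $0$ according to whether $\lambda\circ\alpha_{n_i}^\vee$ is trivial; this is exactly what produces the pair $(\lambda,J)$ with $J\subset S_\lambda$.

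Two points in your converse verification should be tightened. First, the division of labour between relations is slightly misstated: it is the \emph{quadratic} relation that forces $n_i\in S_\lambda$ whenever $\chi(\T_{n_i})=-1$; once that is imposed, the mixed relation $\T_{n_i}\T_t=\T_{n_i t n_i^{-1}}\T_{n_i}$ is then \emph{automatically} compatible, because $n_i t n_i^{-1}=t\cdot\alpha_{n_i}^\vee(\alpha_{n_i}(t))^{-1}$ with $\alpha_{n_i}(t)\in k^\times$, and $\lambda\circ\alpha_{n_i}^\vee$ trivial on $k^\times$ kills the correction factor. You should not claim an \emph{equivalence} between $n_i\in S_\lambda$ and $n_i$-invariance of $\lambda$: only the implication $n_i\in S_\lambda\Rightarrow$ ($\lambda$ is $n_i$-invariant) is needed, and the converse would require surjectivity of $\alpha_{n_i}\colon T_\bullet(k)\to k^\times$, which holds for $\mathbf{GL}_n$ and $\mathbf{SL}_n$ but is an extra hypothesis in general. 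Second, when you dispatch the braid relations among the $\T_{n_i}$'s ``case-by-case on the parities of $m_{ij}$,'' you are implicitly using that the chosen lifts $n_i\in N_\bullet$ satisfy the Coxeter braid relations on the nose in the group (so that the algebra relation is literally $\T_{n_i}\T_{n_j}\cdots=\T_{n_j}\T_{n_i}\cdots$, with no $\T_t$ correction factor appearing). This is true for the explicit matrices in Section~5 (it is the Tits section), but it is worth flagging, since a priori the two products of lifts could differ by an element of $T_\bullet(k)$, which would then have to be absorbed by $\lambda$. Given that, your scalar check is fine: in affine type $\widetilde{A}_{n-1}$ the $m_{ij}$ are $2$ or $3$, both sides vanish unless both $\chi(\T_{n_i})$ and $\chi(\T_{n_j})$ equal $-1$, in which case both sides equal $(-1)^{m_{ij}}$.
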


Let $1$ denote the trivial character of $T_\bullet(k)$.  The proposition above shows, in particular, that the algebra $\cH_{\aff,\bullet}$ possesses two distinguished characters: the \emph{trivial character} $\chi_{1,\emptyset}^\bullet$, sending all $\T_{n_i}$ to 0, and the \emph{sign character} $\chi_{1,S}^\bullet$, sending all $\T_{n_i}$ to $-1$.

We have the following classification of supersingular modules.  

\begin{thm}[\cite{Oll12}, Theorem 5.14]\label{class}
Let $\mm$ be a simple right $\cH_\bullet$-module.  Then $\mm$ is supersingular if and only if it contains a character of $\cH_{\aff,\bullet}$, which is different from a twist of $\chi_{1,\emptyset}^\bullet$ or $\chi_{1,S}^\bullet$.  
\end{thm}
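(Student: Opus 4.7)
The plan is to analyze the restriction of $\mm$ to the affine subalgebra $\cH_{\aff,\bullet}$ and then match the supersingularity condition on the central character of $\cH_\bullet$ against the classification of characters of $\cH_{\aff,\bullet}$ recalled just above. First, using the semidirect decomposition $W_\bullet(1) \cong \Omega_\bullet(1)\ltimes W_{\aff,\bullet}(1)$ together with the braid relations of Theorem \ref{strthm}, I would realize $\cH_\bullet$ as a twisted crossed product of $\cH_{\aff,\bullet}$ by the finitely generated group $\Omega_\bullet$. A Clifford-style argument, using the finite-dimensionality of the simple module $\mm$, then shows that the restriction $\mm|_{\cH_{\aff,\bullet}}$ decomposes as a direct sum of $\Omega_\bullet(1)$-conjugates of any irreducible $\cH_{\aff,\bullet}$-submodule.

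Second, I would verify that every simple $\cH_{\aff,\bullet}$-submodule occurring in $\mm$ is in fact a character. Since we are in characteristic $p$, the quadratic relation $(\T_{n_s}^\bullet)^2 = q\T_{n_s^2}^\bullet + \T_{n_s}^\bullet\tau_s^\bullet$ collapses to $(\T_{n_s}^\bullet)^2 = \T_{n_s}^\bullet\tau_s^\bullet$, so each $\T_{n_s}^\bullet$ is a root of a polynomial with vanishing constant term; combined with the semisimple action of the finite torus $T_\bullet(k)$ (of order coprime to $p$), an iterated common-eigenvector argument on a finite-dimensional simple submodule produces a vector on which all of $\cH_{\aff,\bullet}$ acts by one of the characters $\chi_{\lambda,J}^\bullet$ of the proposition above.

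Finally, I would match the null condition on the central character of $\cH_\bullet$ with the combinatorial data $(\lambda, J)$. Here the input is Vign\'eras' Bernstein-type description of the center, in which certain symmetrized elements indexed by dominant cocharacters generate the center of $\cH_\bullet$. Computing the action of such a central element on the $\cH_\bullet$-module induced from $\chi_{\lambda,J}^\bullet$ using the braid and quadratic relations shows that the central character is null exactly when $\chi_{\lambda,J}^\bullet$ is \emph{not} a twist of $\chi_{1,\emptyset}^\bullet$ or $\chi_{1,S}^\bullet$: these two excluded characters are precisely the ones that extend to characters of the whole algebra $\cH_\bullet$, giving one-dimensional non-supersingular modules. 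The main obstacle will be keeping track of the interplay between the Bernstein elements, the conjugation action of $\Omega_\bullet(1)$, and the combinatorics of $J\subset S_\lambda$; to handle this, I would first reduce from general $\lambda$ to the trivial case by twisting, and then use the Iwahori--Matsumoto involution on $\cH_{\aff,\bullet}$, which interchanges $\chi_{1,\emptyset}^\bullet$ with $\chi_{1,S}^\bullet$, to reduce the two boundary cases to a single computation.
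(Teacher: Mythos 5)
The paper does not prove this statement at all: it is quoted verbatim from Ollivier's article, with the citation ``[Oll12], Theorem 5.14'' placed directly in the theorem header, and no argument is supplied in the text. So there is no internal proof to compare your sketch against --- the comparison is with Ollivier's proof, not this paper's.

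On the substance, there is a genuine gap in your step two. You assert that \emph{every} simple $\cH_{\aff,\bullet}$-submodule occurring in $\mm$ is a character, and you justify this by a ``common-eigenvector'' argument on the $\T_{n_s}^\bullet$ after splitting by the semisimple action of $T_\bullet(k)$. This fails: for $s\neq s'$ in $S$, the elements $\T_{n_s}^\bullet$ and $\T_{n_{s'}}^\bullet$ do not commute, so knowing that each is individually annihilated by a polynomial $X(X+1)$ or $X^2$ does not produce a simultaneous eigenvector. Indeed the conclusion is false in general: there exist finite-dimensional simple $\cH_{\aff,\bullet}$-modules (already in rank one) with no one-dimensional submodule, and these do arise as affine restrictions of simple non-supersingular $\cH_\bullet$-modules. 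What is actually true --- and what the theorem asserts --- is only that the \emph{supersingular} simple modules contain such a character, and proving that direction requires relating the Bernstein central elements to the characters of $\cH_{\aff,\bullet}$ rather than a pointwise eigenvector search. Your step three is where that work has to happen, so it cannot be logically downstream of step two; the two steps need to be fused. Finally, two smaller points: the finite-dimensionality of a simple $\cH_\bullet$-module over $\fpb$ is a theorem (of Vign\'eras/Ollivier), not a formal consequence of the setup, and should be cited; and the Clifford-style decomposition must be handled with some care because $\Omega_\bullet(1)$ is infinite (it contains a copy of $\bbZ$ when $\bullet$ is empty), so one should pass to the stabilizer of an isotypic piece, which is of finite index, rather than invoking Clifford theory directly.
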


Since $\cH_{\aff,\s} = \cH_\s$, this theorem implies in particular that every supersingular module of $\cH_\s$ is a character, unequal to $\chi_{1,\emptyset}^\s$ or $\chi_{1,S}^\s$.  

\subsection{$L$-packets}
Consider now a simple supersingular module $\MM$ for $\cH$.  By Theorem \ref{class}, $\MM|_{\cH_\aff}$ contains a character.  Restricting to $\cH_\s$, we obtain a character $\chi$ of $\cH_\s$, which furthermore must be supersingular. By the construction of the character $\chi$, the underlying vector space is stable by elements of the form $\T_{t}$ for $t\in T(k)$, and we obtain 
$$\MM|_{\cH_\s} = \sum_{j = 0}^{n - 1}\chi\cdot\T_{\omega^j}$$
by simplicity of $\MM$.

The element $\omega$ acts on the set $S$ by conjugation, and we see that
\begin{equation}\label{omorbit}
\chi_{\lambda,J}^\bullet\cdot\T_\omega\cong \chi_{\lambda^\omega,\omega^{-1}J\omega}^\bullet,
\end{equation}
where $\lambda^\omega$ is defined by $\lambda^{\omega}(t) = \lambda(\omega t\omega^{-1})$ for $t\in T_\bullet(k)$.  This leads to the following definition.

\begin{defn}
 Let $\lambda:T_\s(k)\longrightarrow \fpb^\times$ be a character, and $J\subset S_\lambda \subset S$.  We define an action of $\omega^{\mathbb{Z}}$ on the characters of $\cH_{\s}$ by 
$$\omega.\chi_{\lambda,J}^\s := \chi_{\lambda^\omega,\omega^{-1} J\omega}^\s \cong \chi_{\lambda,J}^\s\cdot\T_\omega.$$
We define an \emph{L-packet of $\cH_\s$-modules} to be an orbit of $\omega^\mathbb{Z}$ acting on characters of $\cH_\s$.  We say an $L$-packet is \emph{supersingular} if it consists entirely of supersingular characters, or, equivalently, if it contains a supersingular character.  
\end{defn}

In particular, we see that the size of an $L$-packet must divide $n$.

\begin{defn}\label{regular}
 We say a supersingular character $\chi_{\lambda,J}^\s$ is \emph{regular} if there exists a simple supersingular $\cH$-module $\MM$ of dimension $n$ such that $\chi_{\lambda,J}^\s$ is a Jordan-H\"older factor of $\MM|_{\cH_\s}$.  We say an $L$-packet is \emph{regular} if every character contained in the packet is regular, or, equivalently, if it contains a regular character.  
\end{defn}

It is an easy exercise to see that if $\MM$ is a simple $n$-dimensional supersingular $\cH$-module, the restriction $\MM|_{\cH_\aff}$ is a direct sum of $n$ distinct characters.  This implies that $\chi_{\lambda,J}^\s$ is regular if and only if, for any character $\widetilde{\lambda}:T(k)\longrightarrow \fpb^\times$ satisfying $\widetilde{\lambda}|_{T_\s(k)} = \lambda$, the orbit of the character $\chi_{\widetilde{\lambda},J}$ of $\cH_\aff$ has size $n$ under the action of $\omega^\mathbb{Z}$ (where the action of $\omega^\mathbb{Z}$ on $\chi_{\widetilde{\lambda},J}$ is defined by equation \eqref{omorbit}).

In what follows, we let $(e_1, \ldots, e_j)$ denote the greatest common divisor of integers $e_1,\ldots, e_j\in \mathbb{Z}$, with $(e_1) = |e_1|$.  In addition, for any natural number $a\in\bbN$, we shall denote by 
$$[a] = \frac{q^a - 1}{q - 1}$$
the $q$-analog of $a$.

To proceed further, we need a combinatorial lemma.  

\begin{lem}\label{comb}
 Let $f:\mathbb{Z}_{>0}\longrightarrow \mathbb{C}$ be an arbitrary arithmetic function, let $\mu:\mathbb{Z}_{>0}\longrightarrow \{-1,0,1\}$ denote the M\"obius function, and  let $\sigma_0(m)$ denote the number of divisors of $m$.  We then have
$$f(m) - \sum_{j=1}^{\sigma_0(m) - 1}(-1)^{j + 1}\sum_{\sub{1\leq e_1 < \ldots < e_j < m}{e_i|m}}f((e_1, \ldots, e_j)) = \sum_{e|m}\mu\left(\frac{m}{e}\right)f(e).$$
\end{lem}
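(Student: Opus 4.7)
The plan is to verify the identity by comparing coefficients of $f(e)$ on the two sides, for each divisor $e$ of $m$. On the right-hand side the coefficient of $f(e)$ is $\mu(m/e)$, so the task reduces to showing that the coefficient of $f(e)$ on the left-hand side also equals $\mu(m/e)$.

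First I would isolate the case $e = m$: no subset of proper divisors of $m$ has gcd equal to $m$, so the only contribution to the coefficient of $f(m)$ is the leading $f(m)$ term, giving coefficient $1 = \mu(1)$. For $e < m$, the coefficient of $f(e)$ on the left-hand side is
$$c(e) \;=\; \sum_{\substack{\emptyset \neq S \subset D_m^* \\ \gcd(S) = e}} (-1)^{|S|},$$
where $D_m^*$ denotes the set of proper divisors of $m$. Sending $S \mapsto e^{-1} S$ gives a bijection between such $S$ and nonempty subsets $T$ of the proper divisors $D_{m'}^*$ of $m' := m/e$ satisfying $\gcd(T) = 1$; the claim thereby reduces to the purely combinatorial identity
$$\sum_{\substack{\emptyset \neq T \subset D_{m'}^* \\ \gcd(T) = 1}} (-1)^{|T|} \;=\; \mu(m') \qquad \text{for every integer } m' > 1.$$

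To prove this, I would apply the Möbius relation $[\gcd(T) = 1] = \sum_{d \mid \gcd(T)} \mu(d)$ and interchange the order of summation, producing $\sum_{d \mid m'} \mu(d)\,\Sigma_d$, where $\Sigma_d$ is the alternating sum of $(-1)^{|T|}$ over nonempty subsets $T$ of the set $A_d$ of multiples of $d$ lying in $D_{m'}^*$. Using the elementary fact that $\sum_{\emptyset \neq T \subset A} (-1)^{|T|}$ equals $-1$ when $A$ is nonempty and $0$ when $A$ is empty, one finds $\Sigma_d = -1$ precisely when $d < m'$, while $\Sigma_{m'} = 0$ (there are no proper divisors of $m'$ that are multiples of $m'$). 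The identity thus reduces to $-\sum_{d \mid m',\, d < m'} \mu(d) = \mu(m')$, which is immediate from the standard identity $\sum_{d \mid m'} \mu(d) = [m' = 1] = 0$.

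The main obstacle, and the step most vulnerable to a bookkeeping error, is handling the proper-divisor restriction $e_i < m$ consistently through the rescaling and through the computation of $\Sigma_d$: the boundary case $d = m'$ is genuinely different from $d < m'$, and conflating them gives the wrong answer by exactly the term $\mu(m')$ that one is trying to extract.
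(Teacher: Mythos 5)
Your proof is correct and follows essentially the same route as the paper: comparing coefficients of $f(e)$ on both sides and rescaling by $e$ reduces the lemma to the identity $\sum_{\emptyset \neq T \subset D_{m'}^*,\ \gcd(T)=1}(-1)^{|T|} = \mu(m')$ for $m' > 1$, which is exactly the combinatorial statement the paper isolates. The only difference is that where the paper cites this identity from Aigner (Proposition 4.29 and the example following it), you give a short self-contained proof via $[\gcd(T)=1] = \sum_{d\mid\gcd(T)}\mu(d)$, interchange of summation, and the vanishing $\sum_{d\mid m'}\mu(d)=0$; your care in distinguishing $d<m'$ (where $\Sigma_d=-1$) from $d=m'$ (where $A_{m'}=\emptyset$ and $\Sigma_{m'}=0$) is exactly the bookkeeping needed, and it is handled correctly.
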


\begin{proof}
It suffices to show
$$\mu(m') = \sum_{j = 1}^{\sigma_0(m') - 1}(-1)^j|\{1\leq e_1' < \ldots < e_j' < m': e_i'|m',~(e_1', \ldots, e_j') = 1\}|;$$
this follows from (the example following) Proposition 4.29 in \cite{Aig97}.  %One takes $P$ to be the distributive lattice of divisors of $m'$, with partial order given by $x\preceq y$ if and only if $y$ divides $x$.  
\end{proof}

\begin{propn}\label{orbdivd}
 Let $d$ be a divisor of $n$, and let $g(d)$ denote the number of regular supersingular characters of $\cH_\s$ whose orbit under $\omega^\mathbb{Z}$ has size dividing $d$.  We then have $$g(d) = \sum_{e|n} \mu\left(\frac{n}{e}\right)[(d,e)]\left(\frac{e}{(d,e)}, q - 1\right).$$
\end{propn}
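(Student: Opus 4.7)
The plan is to count $g(d)$ via M\"obius inversion on the side of characters of $\cH_{\aff}$, and then evaluate the resulting sum by a direct combinatorial analysis. The reduction goes as follows. Each $\cH_\s$-character $\chi_{\lambda,J}^\s$ has exactly $q-1$ lifts $\chi_{\widetilde{\lambda},J}$, corresponding to the $q-1$ extensions $\widetilde{\lambda}:T(k)\to\fpb^\times$ of $\lambda$, since $T(k)/T_\s(k)\cong k^\times$ via the determinant. Because $\det$ is invariant under $\omega$-conjugation, all lifts share the same $\omega^{\mathbb{Z}}$-stabilizer in $\mathbb{Z}/n$, and $\chi_{\lambda,J}^\s$ is regular exactly when this common stabilizer is trivial; moreover, $\chi_{1,\emptyset}^\s$ and $\chi_{1,S}^\s$ have $\omega$-fixed lifts, so ``regular'' and ``regular supersingular'' count the same objects for $n\geq 2$. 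For each divisor $e$ of $n$, define
$$R(e,d):=\#\bigl\{(\widetilde{\lambda},J):J\subset S_{\widetilde{\lambda}},\ \widetilde{\lambda}^{\omega^e}=\widetilde{\lambda},\ \omega^{-e}J\omega^e=J,\ \widetilde{\lambda}^{\omega^d}\in\widetilde{\lambda}\langle\det\rangle,\ \omega^{-d}J\omega^d=J\bigr\}.$$
M\"obius inversion on the subgroup lattice of $\mathbb{Z}/n$ (equivalently, Lemma \ref{comb} applied to $f(e)=R(e,d)/(q-1)$) then yields
$$g(d)=\frac{1}{q-1}\sum_{e\mid n}\mu(n/e)\,R(e,d).$$

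The core of the proof is the explicit computation of $R(e,d)$. Writing $\widetilde{\lambda}$ in exponential coordinates $(a_i)_{i\in\mathbb{Z}/n}\in(\mathbb{Z}/(q-1))^n$ and setting $g:=(d,e)$, the combined $\omega^e$-fixing and $\omega^d$-mod-$\det$-fixing conditions on $\widetilde{\lambda}$ collapse to a single relation $a_{i+g}-a_i=c'$ (constant in $i$), subject to the compatibility $(e/g)c'\equiv 0\pmod{q-1}$. This admits $((q-1),e/g)$ valid values of $c'$, and for each such $c'$ the tuple $(a_0,\ldots,a_{g-1})$ is free in $(\mathbb{Z}/(q-1))^g$. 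The joint invariance condition on $J$ forces it to be a union of $\omega^g$-orbits in $S$, and the identity $a_{i+g}=a_i+c'$ implies that the $\omega^g$-orbit of $n_k$ lies in $S_{\widetilde{\lambda}}$ iff $a_k=a_{k+1\bmod g}$ (with convention $a_g=a_0+c'$); hence the number of admissible $J$ for fixed $\widetilde{\lambda}$ equals $2^{s(\widetilde{\lambda})}$, where $s(\widetilde{\lambda})$ counts such indices. Expanding $2^{s(\widetilde{\lambda})}$ as $\sum_{J\subset\{0,\ldots,g-1\}}$ and counting solutions of the affine system on $(a_0,\ldots,a_{g-1})$ gives $(q-1)^{g-|J|}$ for each proper subset $J\subsetneq\{0,\ldots,g-1\}$, whereas the full set $J=\{0,\ldots,g-1\}$ forces all $a_k$ equal and $c'=0$, contributing $(q-1)\delta_{c'=0}$. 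Summing over $J$ yields $q^g-1$ when $c'\neq 0$ and $q^g+q-2$ when $c'=0$, and summing over $c'$ produces
$$R(e,d)=(q-1)\bigl[1+((q-1),e/g)\,[g]\bigr].$$

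Substituting into the M\"obius formula and invoking $\sum_{e\mid n}\mu(n/e)=0$ for $n\geq 2$ to cancel the constant ``$1$'' summand delivers the stated identity. The main technical obstacle is the combinatorial analysis of $R(e,d)$, and in particular the ``cycle versus path'' dichotomy: the twisted closure $a_{g-1}=a_0+c'$ breaks the uniform $(q-1)^{g-|J|}$ pattern at exactly one subset ($|J|=g$), where it forces $c'=0$; this exceptional contribution is precisely what ensures that the constant terms cancel after M\"obius inversion.
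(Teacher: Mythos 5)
Your proof is correct and follows essentially the same route as the paper's: both parametrize the lift $\widetilde{\lambda}$ by a tuple $(a_i)$ modulo $q-1$, reduce the pair of $\omega^d$- and $\omega^e$-invariance conditions to a period-$(d,e)$ relation $a_{i+(d,e)}=a_i+c'$ with the compatibility $(e/(d,e))c'\equiv 0$, sum over admissible $J$ via a binomial identity to produce the $q$-analogue $[(d,e)]$, and then apply Lemma~\ref{comb} together with $\sum_{e\mid n}\mu(n/e)=0$. The only differences are organizational: you count $\cH_{\aff}$-characters (lifts) and divide by $q-1$, letting the vanishing of $\sum\mu(n/e)$ silently absorb the non-supersingular characters $\chi_{1,\emptyset}^\s$ and $\chi_{1,S}^\s$, whereas the paper counts $\cH_\s$-characters directly and removes the trivial character by an explicit $-1$; your ``twisted cycle versus path'' framing of the $c'=0$ / $c'\neq 0$ dichotomy is a clean repackaging of the same combinatorics.
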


\begin{proof}
Let $\chi_{\lambda,J}^\s$ be a supersingular character whose orbit under $\omega^\mathbb{Z}$ has size dividing $d$.  This means that $\omega^{-d}J\omega^d = J$, that is, the set $J$ is stable under the map $n_i \longmapsto n_{i+d}$.  Hence, the subsets $J$ of $S$ satisfying $\omega^{-d}J\omega^d = J$ correspond bijectively to subsets $J'$ of $\{n_1, \ldots, n_d\}$ in the obvious way.  

Let $\lambda$ correspond to the equivalence class
$$((a_1, a_2, \ldots, a_n))\in (\mathbb{Z}/(q-1)\mathbb{Z})^n/\langle(1,1,\ldots, 1)\rangle;$$
the correspondence is defined by $\lambda(\textnormal{diag}(t_1,t_2,\ldots, t_n)) = \prod_{i = 1}^nt_i^{a_i}$, where $t_i\in k^\times$ and $\prod_{i = 1}^n t_i = 1$.  The condition $\lambda^{\omega^d} = \lambda$ implies that there exists $z\in\mathbb{Z}/(q-1)\mathbb{Z}$ such that 
$$a_{i + d} \equiv a_{i} + z~(\textnormal{mod}~ q - 1)$$
for every $0 < i \leq n$ (where we consider the indices modulo $n$).  Summing gives
$$\sum_{j = 0}^{n/d - 1} a_{i + jd} \equiv \frac{n}{d}z + \sum_{j = 0}^{n/d - 1} a_{i + jd}~(\textnormal{mod}~ q - 1)~ \Longleftrightarrow~ z \equiv 0~ \left(\textnormal{mod}\frac{q - 1}{(\frac{n}{d}, q - 1)}\right).$$
Hence, $\lambda$ is determined by $(a_1, \ldots, a_d)$ modulo the diagonal, and the element $z$.  
%The class corresponding to $\lambda$ takes the form
%$$\left(\left(a_1, \ldots, a_d, a_1 + z, \ldots, a_d + z, \ldots, a_1 + \left(\frac{n}{d} - 1\right)z, \ldots, a_d + \left(\frac{n}{d} - 1\right)z\right)\right).$$
Now, let $J'$ be the subset of $\{n_1, \ldots, n_d\}$ to which $J$ corresponds.  Note that $J'$ must be a \emph{proper} subset, else we would have $J = S$ and $\lambda$ would be the trivial character.  The number of characters $\chi_{\lambda,J}^\s$ which satisfy $\omega^d.\chi_{\lambda,J}^\s = \chi_{\lambda,J}^\s$ and for which $J$ corresponds to a fixed $J'$ is therefore equal to 
$$(q - 1)^{d - 1 - |J'|}\left(\frac{n}{d},q - 1\right).$$
Hence, the total number of supersingular characters satisfying $\omega^d.\chi_{\lambda,J}^\s = \chi_{\lambda,J}^\s$ is equal to
\begin{eqnarray*}
-1 + \sum_{J'\subsetneq\{n_1, \ldots, n_d\}}(q - 1)^{d - 1 - |J'|}\left(\frac{n}{d}, q - 1\right) & = & -1 + [d]\left(\frac{n}{d}, q - 1\right)
\end{eqnarray*}
(the $-1$ accounts for the contribution of the trivial character $\chi_{1,\emptyset}^\s$).  

It remains to verify how many of these characters are regular.  Let $\widetilde{\lambda}:T(k)\longrightarrow \fpb^\times$ be a character whose restriction to $T_\s(k)$ is equal to $\lambda$, and let $e$ be a proper divisor of $n$.  Denote by ${\chi}_{\widetilde{\lambda},J}:\cH_\aff\longrightarrow\fpb^\times$ the character of the affine Hecke algebra $\cH_\aff$ associated to $\widetilde{\lambda}$ and $J$ (so that $\chi_{\widetilde{\lambda},J}|_{\cH_\s} = \chi_{\lambda,J}^\s$), and assume $\omega^e.{\chi}_{\widetilde{\lambda},J} = {\chi}_{\widetilde{\lambda},J}$.   This implies in particular that $\omega^{-e}J\omega^e = J$; hence, we obtain $\omega^{-(d,e)}J\omega^{(d,e)} = J$, and the set of such $J$ correspond bijectively to subsets $J'$ of $\{n_1, \ldots, n_{(d,e)}\}$.

We let $\widetilde{\lambda}$ correspond to 
$$(a_1, a_2, \ldots, a_n)\in (\mathbb{Z}/(q - 1)\mathbb{Z})^n$$ 
(lifting the class $((a_1, a_2, \ldots, a_n))\in(\mathbb{Z}/(q-1)\mathbb{Z})^n/\langle(1,1,\ldots, 1)\rangle$ above).  By the above computation, the $n$-tuple corresponding to $\widetilde{\lambda}$ satisfies
\begin{equation}\label{modd}
a_i \equiv a_{\ol{i}} + \left\lfloor\frac{i - 1}{d}\right\rfloor z~(\textnormal{mod}~q - 1),
\end{equation}
where $0< i \leq n$, and where $0 < \overline{i} \leq d$ is congruent to $i$ modulo $d$.  
The condition $\widetilde{\lambda}^{\omega^e} = \widetilde{\lambda}$ implies, in particular, that
$$a_1 \equiv a_{1 + de/(d,e)} \equiv a_1 + \frac{e}{(d,e)}z~(\textnormal{mod}~q - 1),$$
which is equivalent to
$$z \equiv 0~\left(\textnormal{mod}~\frac{q - 1}{\left(\frac{e}{(d,e)}, q - 1\right)}\right).$$

%Equation \eqref{modd} above also has the following consequence.  Fix $0 < i \leq (d,e)$.  For any integer $0\leq m< \frac{n}{(d,e)}$, there exists a unique integer $0\leq j< \frac{d}{(d,e)}$ such that $je \equiv m(d,e)~(\textnormal{mod}~d)$.  This gives
%\begin{eqnarray*}
%a_{i + m(d,e)} & \equiv & a_{\ol{i + m(d,e)}} + \left\lfloor\frac{i + m(d,e) - 1}{d}\right\rfloor z \\ 
% & \equiv &  a_{\ol{i + je}} + \left\lfloor\frac{i + m(d,e) - 1}{d}\right\rfloor z\\
% & \equiv & a_{i + je} + \left\lfloor\frac{i + m(d,e) - 1}{d}\right\rfloor z - \left\lfloor\frac{i + je - 1}{d}\right\rfloor z\\
% & \equiv & a_{i} + \left\lfloor\frac{i + m(d,e) - 1}{d}\right\rfloor z - \left\lfloor\frac{i + je - 1}{d}\right\rfloor z\\
% & \equiv & a_{i} + \left\lfloor\frac{m(d,e) - 1}{d}\right\rfloor z - \left\lfloor\frac{je - 1}{d}\right\rfloor z\quad(\textnormal{mod}~ q - 1).
%\end{eqnarray*}
As above, the character $\widetilde{\lambda}$ is determined by the integers $a_1, \ldots, a_{(d,e)}$ and the element $z$.  Proceeding as above, given a proper subset $J'$ of $\{n_1, \ldots, n_{(d,e)}\}$, we obtain 
$$(q - 1)^{(d,e) - 1 - |J'|}\left(\frac{e}{(d,e)},q - 1\right)$$ 
characters $\chi_{\lambda,J}^\s$ such that the lift ${\chi}_{\widetilde{\lambda},J}$ has an orbit of size dividing $e$, with $J$ corresponding to a fixed $J'$.  Hence, the total number of supersingular characters $\chi_{\lambda,J}^\s$ such that the lift ${\chi}_{\widetilde{\lambda},J}$ has an orbit of size dividing $e$ is 
$$ -1 + [(d,e)]\left(\frac{e}{(d,e)},q - 1\right).$$

By the inclusion-exclusion principle, the number of regular supersingular characters of $\cH_\s$ of orbit size dividing $d$ is
$$-1 + [d]\left(\frac{n}{d}, q - 1\right) - \sum_{j = 1}^{\sigma_0(n) - 1}(-1)^{j + 1}\sum_{\sub{1\leq e_1 < \ldots < e_j < n}{e_i|n}}-1 + [(d, e_1, \ldots, e_j)]\left(\frac{(e_1, \ldots, e_j)}{(d,e_1, \ldots, e_j)}, q - 1\right).$$
Applying Lemma \ref{comb} with $f(e) = -1 + [(d,e)]\left(\frac{e}{(d,e)},q - 1\right)$ and using the fact that \linebreak $\sum_{e|n}\mu\left(\frac{n}{e}\right) = 0$ gives the result.  
\end{proof}

\begin{remark}
 Evaluating the function $g$ at $1$, we obtain
$$g(1) = \sum_{e|n}\mu\left(\frac{n}{e}\right)(e, q - 1).$$
As a function of $n$, the above expression is multiplicative, which implies
$$g(1) = \begin{cases}\varphi(n) & \textnormal{if}~(n,q - 1) = n,\\ 0 & \textnormal{if}~(n,q - 1)\neq n,\end{cases}$$
where $\varphi$ denotes Euler's phi function.  
\end{remark}

\begin{cor}\label{numorbits}
 Let $d$ be a divisor of $n$, and let $h(d)$ denote the number of regular supersingular $L$-packets of $\cH_\s$-modules of size $d$.  We then have
$$h(d) = \frac{1}{d}\sum_{e|d} \mu\left(\frac{d}{e}\right)g(e).$$
\end{cor}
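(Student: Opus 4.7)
The plan is to relate $g(d)$ and $h(d)$ via a simple counting identity, and then extract $h(d)$ by Möbius inversion. The key observation is that a regular supersingular character $\chi_{\lambda,J}^\s$ has its $\omega^{\mathbb{Z}}$-orbit of size dividing $d$ if and only if $\omega^d$ fixes it, and the orbits of the $\omega^{\mathbb{Z}}$-action are precisely the $L$-packets. Since the orbit of $\chi_{\lambda,J}^\s$ consists of distinct characters (by definition of orbit size), each $L$-packet of size $d'$ contributes exactly $d'$ characters to any $g(d)$ with $d' \mid d$, and zero otherwise.

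Concretely, I would argue as follows. For a divisor $d$ of $n$, partition the set counted by $g(d)$ according to the exact size $d'$ of the $\omega^{\mathbb{Z}}$-orbit, which must satisfy $d' \mid d$ because it divides $d$ (and also $n$, by the remark following the definition of $L$-packet). The number of regular supersingular characters whose orbit has exact size $d'$ is $d' \cdot h(d')$, since the $h(d')$ such $L$-packets are disjoint and each has cardinality $d'$. This gives the identity
\begin{equation*}
g(d) \;=\; \sum_{d' \mid d} d'\, h(d').
\end{equation*}

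Now I apply Möbius inversion to the arithmetic function $d \longmapsto d\, h(d)$, viewed as a function on the divisor lattice of $n$. Setting $F(d) := d\, h(d)$ and $G(d) := g(d)$, the identity above reads $G(d) = \sum_{d' \mid d} F(d')$, whence
\begin{equation*}
d\, h(d) \;=\; F(d) \;=\; \sum_{e \mid d} \mu\!\left(\tfrac{d}{e}\right) G(e) \;=\; \sum_{e \mid d} \mu\!\left(\tfrac{d}{e}\right) g(e),
\end{equation*}
and dividing by $d$ gives the desired formula.

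There is no real obstacle here, since the work has already been done in Proposition \ref{orbdivd}; the only thing to check carefully is that regularity behaves well under the $\omega^{\mathbb{Z}}$-action, so that ``orbit size exactly $d'$'' partitions the regular characters into complete $L$-packets. This is immediate from the definition of regular $L$-packet (Definition \ref{regular}), which requires \emph{every} character of the packet to be regular — equivalently, it suffices that one be regular, as already noted after that definition. Thus the partition into $L$-packets is compatible with the regularity condition, and the counting identity $g(d) = \sum_{d' \mid d} d'\,h(d')$ restricted to regular objects is valid.
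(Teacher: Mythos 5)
Your proof is correct and is surely the intended argument: the paper states the corollary without a proof, treating it as an immediate consequence of Proposition \ref{orbdivd}, and the identity $g(d) = \sum_{d'\mid d} d'\,h(d')$ followed by M\"obius inversion is exactly the step being elided. Your observation that regularity is constant on $\omega^{\mathbb{Z}}$-orbits (so the partition of regular supersingular characters into $L$-packets is well-defined) is the only thing worth checking, and you correctly point to the ``or, equivalently'' clause in Definition \ref{regular}.
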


\begin{lem}\label{g(d)neq0}
 Let $d$ be a divisor of $n$.  
\begin{enumerate}
 \item We have $g(d)\neq 0$ if and only if $\left(\frac{n}{d}, q - 1\right) = \frac{n}{d}$.  
 \item If $h(d)\neq 0$, then $\left(\frac{n}{d}, q - 1\right) = \frac{n}{d}$.
\end{enumerate}
\end{lem}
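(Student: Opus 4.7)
My plan is to prove part (1) by a direct factorization of the defining sum, and to deduce part (2) from part (1) using the combinatorial definition of $h(d)$. Throughout I write $c = n/d$, and denote by $P(m)$ the set of prime divisors of a positive integer $m$.

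\textbf{Step 1 (Factorization $g(d) = A\cdot B$).} Since $\mu(n/e) \neq 0$ forces $n/e$ to be squarefree, the divisors $e \mid n$ contributing to $g(d)$ are parametrized by subsets $S \subseteq P(n)$ via $e = n/\prod_{\ell \in S}\ell$. The disjoint decomposition $P(n) = P(c) \sqcup (P(n)\setminus P(c))$ induces factorizations $n = n_1 n_2$ and $d = d_1 d_2$, where the subscript $1$ denotes the part supported on $P(c)$; note $d_2 = n_2$, since any prime $\ell\mid d$ with $\ell \nmid c$ satisfies $v_\ell(d) = v_\ell(n)$. Writing $e = e_1 e_2$ with $e_i \mid n_i$ and $n_i/e_i$ squarefree, a prime-by-prime check (using $v_\ell(d) \leq v_\ell(n)-1 \leq v_\ell(e_1)$ for $\ell \in P(c)$) gives the decoupling $(d,e) = d_1 e_2$ and $e/(d,e) = e_1/d_1$; in particular $d_1 \mid e_1$. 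Substituting $r = e_1/d_1$,
\[
g(d) \;=\; \underbrace{\Bigl(\sum_{\substack{r \mid c \\ c/r\ \textnormal{sqf}}} \mu(c/r)(r,q-1)\Bigr)}_{A} \,\cdot\, \underbrace{\Bigl(\sum_{\substack{e_2 \mid d_2 \\ d_2/e_2\ \textnormal{sqf}}} \mu(d_2/e_2)\,[d_1 e_2]\Bigr)}_{B}.
\]

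\textbf{Step 2 (Evaluation).} Since $(\cdot,q-1)$ is multiplicative, $A$ factors over primes $\ell \in P(c)$:
\[
A \;=\; \prod_{\ell \in P(c)}\Bigl(\ell^{\min(v_\ell(c),\,v_\ell(q-1))} \,-\, \ell^{\min(v_\ell(c)-1,\,v_\ell(q-1))}\Bigr),
\]
and each bracket equals $\varphi(\ell^{v_\ell(c)})$ when $v_\ell(q-1) \geq v_\ell(c)$ and vanishes otherwise. Thus $A = \varphi(c)$ if $c \mid q-1$, and $A = 0$ otherwise. For $B$: if $d_2 = 1$ then $B = [d] > 0$; if $d_2 > 1$, the identity $\sum_{e_2\mid d_2}\mu(d_2/e_2) = 0$ allows one to rewrite
\[
(q-1)\,B \;=\; \sum_{e_2 \mid d_2}\mu(d_2/e_2)\,(q^{d_1})^{e_2} \;=\; d_2\cdot N,
\]
where $N \geq 1$ is Gauss's count of monic irreducible polynomials of degree $d_2$ over $\mathbb{F}_{q^{d_1}}$. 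Therefore $B > 0$ in all cases, and part (1) follows: $g(d) = A\cdot B$ vanishes if and only if $A = 0$, if and only if $c \nmid q-1$.

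\textbf{Step 3 (Part (2) and main obstacle).} By definition $h(e)$ is a cardinality, so $h(e) \geq 0$ for every $e \mid n$. Grouping the regular supersingular characters of $\cH_\s$ by the size of their $\omega^{\mathbb{Z}}$-orbit yields $g(d) = \sum_{e \mid d} e\,h(e) \geq d\,h(d)$. If $h(d) \neq 0$, then $g(d) \neq 0$, and part (1) forces $c = n/d \mid q-1$. The main obstacle is the decoupling in Step 1 — finding the right partition of $P(n)$ that separates the two factors $[(d,e)]$ and $(e/(d,e),q-1)$; once this is in place, the evaluations of $A$ and $B$ reduce to standard identities (Euler's $\varphi$-formula and Gauss's formula for irreducible polynomials over finite fields).
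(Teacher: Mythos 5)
Your proposal is correct and complete. The paper itself leaves part (1) to the reader as a ``tedious but straightforward exercise,'' so your factorization argument is precisely the content the paper elides. The key move — partitioning $P(n)$ according to whether a prime divides $c = n/d$, then checking that under $\mu(n/e)\neq 0$ this forces $d_1 \mid e_1$ and decouples $(d,e) = d_1 e_2$ from $e/(d,e) = e_1/d_1$, so that $g(d) = A\cdot B$ — is a clean way to organize the number theory. The evaluation of $A$ via the multiplicativity of $\mu * (\,\cdot\,, q-1)$ (yielding $A = \varphi(c)$ if $c \mid q-1$ and $A = 0$ otherwise) and of $B$ via $[d]$ in the degenerate case and Gauss's count of irreducible polynomials over $\mathbb{F}_{q^{d_1}}$ otherwise are both correct, and together give part (1). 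For part (2), your explicit use of the Möbius-inverted identity $g(d) = \sum_{e\mid d} e\,h(e)$ together with the non-negativity of each $h(e)$ (being a cardinality) is exactly what the paper means by ``By Corollary \ref{numorbits}, it suffices to prove the first claim'' — you have simply spelled out the one-line reduction. No gaps; the only difference from the paper is that you actually wrote down the omitted proof of part (1).
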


\begin{proof}
By Corollary \ref{numorbits}, it suffices to prove the first claim.  The proof is a tedious (but straightforward) exercise in elementary number theory, and is left to the reader.
\end{proof}

\section{Galois Groups and Projective Galois Representations}\label{galois}

\subsection{Galois Groups}  Let $\gal_{F} := \textrm{Gal}(\ol{F}/F)$ denote the absolute Galois group of $F$, and let $\ii_{F}$ denote the inertia subgroup of elements which act trivially on the residue field $k_{\ol{F}}$.  For any extension $L$ of $F$ contained in $\ol{F}$, we define $\gal_L := \textrm{Gal}(\ol{F}/L)$.  We have $\gal_{F}/\ii_{F} \cong \textrm{Gal}(k_{\overline{F}}/k)\cong \widehat{\mathbb{Z}}$; we denote by $\textnormal{Fr}_q$ a fixed element of $\gal_{F}$ whose image in $\textnormal{Gal}(k_{\overline{F}}/k)$ is the geometric Frobenius element.  Finally, for $m\geq 1$, we let $F_{m}$ denote the unique unramified extension of $F$ of degree $m$ contained in $\ol{F}$.

We fix a compatible system $\{\sqrt[q^m-1]{\varpi}\}_{m\geq 1}$ of $(q^m-1)^{\textnormal{th}}$ roots of $\varpi$, and let $\omega_m:\ii_{F}\longrightarrow \fpb^\times$ denote the character given by 
\begin{equation}\label{omegam}
\omega_m: h\longmapsto \iota\circ\mathfrak{r}_{\ol{F}}\left(\frac{h.\sqrt[q^m-1]{\varpi}}{\sqrt[q^m-1]{\varpi}}\right),
\end{equation}
where $h\in \ii_{F}$ and $\mathfrak{r}_{\ol{F}}:\oo_{\ol{F}}\longrightarrow k_{\ol{F}}$ denotes the reduction modulo the maximal ideal.  Lemma 2.5 of \cite{Br07} shows that the character $\omega_m$ extends to a character of $\gal_{F_{m}}$; we continue to denote by $\omega_m$ the extension which sends the element $\textnormal{Fr}_q^m$ to 1.  Moreover, for $\lambda\in\fpb^\times$, we let $\mu_{m,\lambda}:\gal_{F_m}\longrightarrow \fpb^\times$ denote the unramified character which sends $\textnormal{Fr}_q^m$ to $\lambda$.  Lemma 2.2 of \emph{loc. cit.} implies that every smooth $\fpb$-character of $\gal_{F_m}$ is of the form $\mu_{m,\lambda}\omega_m^r$ with $\lambda\in\fpb^\times$ and $0\leq r < q^m - 1$.  

\subsection{Galois Representations}  

We begin by recalling the classification of irreducible $n$-dimensional mod-$p$ representations of the group $\gal_F$.  Throughout, we assume that $\textnormal{GL}_n(\fpb)$ and $\textnormal{PGL}_n(\fpb)$ are given the discrete topology.  We take \cite{Vig97}, Sections 1.13 and 1.14, and \cite{Be10}, Section 2, as our references.

An element $r$ of $\mathbb{Z}/(q^n - 1)\mathbb{Z}$ is said to be \emph{primitive} if we have 
$$r\not\equiv 0~\left(\textnormal{mod}~\frac{[n]}{[d]}\right)$$
for every proper divisor $d$ of $n$.  The necessary results are summarized in the following proposition.

\begin{propn}\label{galreps}\hfill
\begin{enumerate}[(1)]
\item Any continuous irreducible $n$-dimensional mod-$p$ representation of $\gal_F$ is isomorphic to
$$\textnormal{ind}_{\gal_{F_n}}^{\gal_{F}}(\mu_{n,\lambda}\omega_n^r),$$
where $\lambda\in\fpb^\times$, and $r\in\mathbb{Z}/(q^n - 1)\mathbb{Z}$ is primitive.  
\item We have an isomorphism
$$\textnormal{ind}_{\gal_{F_n}}^{\gal_{F}}(\mu_{n,\lambda}\omega_n^r) \cong \textnormal{ind}_{\gal_{F_n}}^{\gal_{F}}(\mu_{n,\lambda'}\omega_n^{r'})$$
if and only if $\lambda' = \lambda$ and $r' \equiv q^ar~(\textnormal{mod}~q^n - 1)$ for some $a\in \mathbb{Z}$.  
\end{enumerate}
\end{propn}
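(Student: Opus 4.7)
The plan is to reduce everything to the classification of characters of $\gal_{F_n}$ and then apply Mackey/Frobenius reciprocity.

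For part (1), let $\rho:\gal_F\to \textnormal{GL}_n(\fpb)$ be an irreducible continuous representation. First I would show that the wild inertia subgroup $P_F$ acts trivially: since $P_F$ is pro-$p$, its image in $\textnormal{GL}_n(\fpb)$ is a pro-$p$ subgroup, hence conjugate into the upper unipotent subgroup. Thus $\rho^{P_F}\neq 0$, and because $P_F$ is normal in $\gal_F$ this subspace is $\gal_F$-stable, so by irreducibility $P_F$ acts trivially. Therefore $\rho$ factors through the tame quotient. Next, restrict to the tame inertia $\ii_F^{\textnormal{tame}}=\ii_F/P_F$, which is procyclic (pro-prime-to-$p$). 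By semisimplicity on the abelian group $\ii_F^{\textnormal{tame}}$ and continuity, $\rho|_{\ii_F}$ splits as a direct sum of characters, each of which factors through a finite quotient of order prime to $p$, and hence is of the form $\omega_m^{r}|_{\ii_F}$ for some $m\mid n!$ and some $r$. The Frobenius $\textnormal{Fr}_q$ permutes these characters and raises each to its $q$-th power; by irreducibility this permutation action is transitive, so all characters appearing on $\ii_F$ belong to a single $\textnormal{Fr}_q$-orbit. A transitive cyclic orbit of size $n$ among characters of $\ii_F^{\textnormal{tame}}$ forces each character to be of the form $\omega_n^r|_{\ii_F}$ with the orbit $\{r,qr,\ldots,q^{n-1}r\}$ having exactly $n$ elements modulo $q^n-1$; this last condition is precisely the primitivity of $r$, since non-primitivity would mean $r$ is fixed by some $q^d$ with $d\mid n$ proper. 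The stabilizer of the character $\omega_n^r|_{\ii_F}$ in $\gal_F$ is then $\gal_{F_n}$, and Frobenius reciprocity/Mackey gives $\rho \cong \textnormal{ind}_{\gal_{F_n}}^{\gal_F}(\psi)$, where $\psi$ extends $\omega_n^r|_{\ii_F}$. By the classification of characters of $\gal_{F_n}$ recalled in the excerpt, $\psi = \mu_{n,\lambda}\omega_n^{r}$ for a unique $\lambda \in \fpb^\times$.

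For part (2), I would use standard Mackey theory: two induced representations $\textnormal{ind}_{\gal_{F_n}}^{\gal_F}(\psi)$ and $\textnormal{ind}_{\gal_{F_n}}^{\gal_F}(\psi')$ with $\gal_{F_n}$ normal in $\gal_F$ are isomorphic if and only if $\psi'$ lies in the $\gal_F/\gal_{F_n}$-conjugacy orbit of $\psi$. The quotient $\gal_F/\gal_{F_n}$ is cyclic of order $n$ generated by the image of $\textnormal{Fr}_q$, so I need to compute the conjugation action of $\textnormal{Fr}_q$ on $\mu_{n,\lambda}\omega_n^r$. Since $\textnormal{Fr}_q$ commutes with $\textnormal{Fr}_q^n$, the unramified part satisfies $\mu_{n,\lambda}^{\textnormal{Fr}_q}=\mu_{n,\lambda}$. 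For the ramified part, the defining formula \eqref{omegam} together with the identity $\textnormal{Fr}_q^{-1} h\, \textnormal{Fr}_q \equiv h^q$ on tame inertia (coming from geometric Frobenius acting on $k_{\ol F}$) yields $\omega_n^{\textnormal{Fr}_q}=\omega_n^q$. Hence $\textnormal{Fr}_q$-conjugation sends $\mu_{n,\lambda}\omega_n^r$ to $\mu_{n,\lambda}\omega_n^{qr}$, and iterating gives the orbit $\{\mu_{n,\lambda}\omega_n^{q^a r}\}_{a\in\mathbb{Z}}$, proving (2).

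The main obstacle is step (1): specifically, carefully going from ``$\rho|_{\ii_F}$ is a sum of tame characters permuted by $\textnormal{Fr}_q$'' to the precise parametrization by a \emph{primitive} $r \in \mathbb{Z}/(q^n-1)\mathbb{Z}$. This requires identifying the $n$-element $\textnormal{Fr}_q$-orbits on the character group of tame inertia with primitive classes modulo $q^n-1$, and then using that any extension of such a character from $\ii_F$ to $\gal_{F_n}$ differs from $\omega_n^r$ only by an unramified twist. All other steps are essentially formal manipulations with Mackey theory and the explicit structure of tame inertia.
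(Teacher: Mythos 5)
The paper itself does not prove Proposition~\ref{galreps}; it cites Vign\'eras (Section 1.14 of \cite{Vig97}) and Berger (Lemma 2.1.4 of \cite{Be10}). Your argument is a correct, self-contained rendition of the standard tame-ramification/Clifford-theory proof, which is essentially what those references establish, so there is no genuinely different route to compare.

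One place you should be a bit more careful is the passage from ``the $\textnormal{Fr}_q$-action on the set of tame characters appearing in $\rho|_{\ii_F}$ is transitive'' to ``the orbit has exactly $n$ elements.'' Transitivity alone allows an orbit of size $d\mid n$, $d<n$, with each isotypic component of dimension $n/d>1$. The way to close this is the one you implicitly invoke via Frobenius reciprocity but should make explicit: pick a character $\chi$ of $\ii_F$ appearing in $\rho$, let $d$ be its orbit size so that its stabilizer is $\gal_{F_d}$, choose an eigenvector of $\textnormal{Fr}_q^d$ inside the $\chi$-isotypic subspace (possible since $\fpb$ is algebraically closed) to obtain a character $\psi$ of $\gal_{F_d}$ extending $\chi$, and note that the resulting nonzero map $\textnormal{ind}_{\gal_{F_d}}^{\gal_F}(\psi)\to\rho$ is surjective by irreducibility of $\rho$; comparing dimensions forces $d=n$ and the multiplicity to be one. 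With that settled, the characters of tame inertia fixed by $\textnormal{Fr}_q^n$ are precisely those of level dividing $n$, i.e.\ the powers $\omega_n^r|_{\ii_F}$, and ``orbit of size exactly $n$'' is exactly primitivity of $r$, as you say. The Mackey computation in part (2) also needs the observation that $\textnormal{ind}_{\gal_{F_n}}^{\gal_F}(\mu_{n,\lambda}\omega_n^r)$ is irreducible when $r$ is primitive (the restriction to $\ii_F$ is multiplicity-free and the summands are permuted cyclically by $\textnormal{Fr}_q$), so that nonvanishing of the Hom-space forced by Mackey does give an isomorphism; you use this implicitly and it is worth stating. Everything else, including the conjugation formula $\omega_n^{\textnormal{Fr}_q}=\omega_n^q$ and the unramified-twist classification of extensions of $\omega_n^r|_{\ii_F}$ to $\gal_{F_n}$, is correct.
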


\begin{proof}
 See Section 1.14 of \cite{Vig97}, or Lemma 2.1.4 and the subsequent remarks in \cite{Be10}.
\end{proof}

We now consider projective representations.  
\begin{defn}
By an \emph{$n$-dimensional mod-$p$ projective Galois representation} we mean a continuous homomorphism from $\gal_F$ to $\textnormal{PGL}_n(\fpb)$.  We say a projective Galois representation is \emph{irreducible} if it does not factor through a proper parabolic subgroup of $\textnormal{PGL}_n(\fpb)$.  Moreover, we say two projective representations $\sigma$ and $\sigma'$ are \emph{equivalent} if there exists an element $m\in\textnormal{PGL}_n(\fpb)$ such that $\sigma(g) = m\sigma'(g)m^{-1}$ for all $g\in \gal_F$.  This equivalence relation will be denoted $\sigma\sim\sigma'$.  
\end{defn}

Given any continuous $n$-dimensional Galois representation $\rho$, we denote by 
$$\llbracket\rho\rrbracket: \gal_F\stackrel{\rho}{\longrightarrow} \textnormal{GL}_n(\fpb) \stackrel{\llbracket-\rrbracket}{\longrightarrow} \textnormal{PGL}_n(\fpb)$$ 
the projective representation obtained as the composition of $\rho$ with the natural quotient map.  The extent to which these representations constitute all projective Galois representations is given by the following theorem.

\begin{thm}\label{tatesthm}
 We have 
$$\textnormal{H}^2(\gal_F,\fpb^\times) = 0.$$
Consequently, every irreducible $n$-dimensional projective Galois representation lifts to a genuine Galois representation, i.e., is of the form $\llbracket\rho\rrbracket$, where $\rho$ is a continuous irreducible $n$-dimensional Galois representation.  
\end{thm}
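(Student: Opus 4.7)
The plan is to prove the vanishing $H^2(\gal_F,\fpb^\times) = 0$ and then extract the lifting statement from the standard theory of central extensions. The Galois action on $\fpb^\times$ that controls the obstruction is the trivial one: the group $\fpb^\times$ appears as the (pointwise fixed) center of $\textnormal{GL}_n(\fpb)$ in the exact sequence
$$1\longrightarrow \fpb^\times\longrightarrow \textnormal{GL}_n(\fpb)\longrightarrow \textnormal{PGL}_n(\fpb)\longrightarrow 1,$$
and the pullback of this extension along a projective representation $\sigma$ defines the obstruction class $c(\sigma)\in H^2(\gal_F,\fpb^\times)$ with trivial coefficients. As an abstract abelian group $\fpb^\times\cong\bigoplus_{\ell\neq p}\bbQ_\ell/\bbZ_\ell$, so it suffices to verify $H^2(\gal_F,\bbQ_\ell/\bbZ_\ell)=0$ for each prime $\ell\neq p$.

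Since Galois cohomology commutes with filtered colimits of discrete coefficients, $H^2(\gal_F,\bbQ_\ell/\bbZ_\ell)=\varinjlim_n H^2(\gal_F,\bbZ/\ell^n\bbZ)$. The plan is to apply Tate local duality, valid in either residual characteristic provided $\ell\neq p$, to identify $H^2(\gal_F,\bbZ/\ell^n\bbZ)\cong \textnormal{Hom}(\mu_{\ell^n}(F),\bbQ/\bbZ)$. Under the standard presentation of $\bbQ_\ell/\bbZ_\ell$ as the colimit of the system $\bbZ/\ell^n\bbZ\hookrightarrow\bbZ/\ell^{n+1}\bbZ$, $1\mapsto \ell$, functoriality of Tate duality translates the transition on $H^2$ into the Pontryagin dual of the $\ell$-th power map $\mu_{\ell^{n+1}}(F)\to\mu_{\ell^n}(F)$. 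Because $\mu_{\ell^\infty}(F)$ is a finite cyclic $\ell$-group, iterating the $\ell$-th power map eventually annihilates every element, so the colimit vanishes, as required.

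For the consequence, given a continuous irreducible projective representation $\sigma:\gal_F\to\textnormal{PGL}_n(\fpb)$, the vanishing of $c(\sigma)$ produces a lift $\tilde\sigma:\gal_F\to\textnormal{GL}_n(\fpb)$; continuity is automatic since $\sigma$ factors through a finite quotient through which $\tilde\sigma$ can be arranged to factor as well, and irreducibility of $\tilde\sigma$ is forced by irreducibility of $\sigma$, since any proper $\gal_F$-stable subspace of $\fpb^n$ would descend to a proper $\sigma$-stable subspace of $\mathbb{P}^{n-1}(\fpb)$. The main technical input is Tate local duality; a cleaner alternative would be to invoke that the strict cohomological dimension of $\gal_F$ equals $2$, combine this with the short exact sequence $0\to\bbZ\to\bbQ\to\bbQ/\bbZ\to 0$ and the vanishing of $H^{\geq 1}(\gal_F,\bbQ)$ (a uniquely divisible module) to obtain $H^2(\gal_F,\bbQ/\bbZ)\cong H^3(\gal_F,\bbZ)=0$, and then extract the prime-to-$p$ summand.
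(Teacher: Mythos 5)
Your argument is correct and is essentially the same as the paper's, which cites (the proof of) Theorem~4 and its corollary in Serre's \emph{Modular forms of weight one and Galois representations}: decompose $\fpb^\times\cong\bigoplus_{\ell\neq p}\bbQ_\ell/\bbZ_\ell$, then kill each $\ell$-primary summand of $\textnormal{H}^2$ via Tate local duality and the fact that $\mu_{\ell^\infty}(F)$ is finite; you simply carry out the duality computation that the paper delegates to Serre. One small imprecision in the lifting step: the lift $\tilde\sigma$ need not factor through the \emph{same} finite quotient as $\sigma$ (the obstruction vanishes in continuous cohomology of $\gal_F$, not of the finite quotient $\gal_F/N$); continuity of $\tilde\sigma$ instead comes directly from the fact that the correcting $1$-cochain lies in continuous (hence locally constant, since $\fpb^\times$ is discrete) cochains, after which $\tilde\sigma$ factors through some possibly smaller open normal subgroup. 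The irreducibility step is fine once phrased in the paper's terms: a proper $\tilde\sigma$-stable subspace gives a proper parabolic of $\textnormal{GL}_n(\fpb)$ through which $\tilde\sigma$ factors, whose image in $\textnormal{PGL}_n(\fpb)$ contradicts irreducibility of $\sigma$.
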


\begin{proof}
 This follows from (the proof of) Theorem 4 (and its corollary) in \cite{Se77}; one simply uses the decomposition $\fpb^\times \cong \bigoplus_{\ell\neq p} \mathbb{Q}_\ell/\mathbb{Z}_\ell$.
\end{proof}

\begin{lem}\label{lambda=1}
 We have an equivalence 
$$\llbracket\textnormal{ind}_{\gal_{F_n}}^{\gal_F}(\mu_{n,\lambda}\omega_n^r)\rrbracket\sim \llbracket\textnormal{ind}_{\gal_{F_n}}^{\gal_F}(\omega_n^r)\rrbracket.$$
\end{lem}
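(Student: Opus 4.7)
The plan is to realize the twist $\mu_{n,\lambda}$ on $\gal_{F_n}$ as the restriction of an unramified character of $\gal_F$, so that after tensoring with this character (which does not affect projectivizations), the two induced representations become isomorphic on the nose.

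First I would choose $\mu\in\fpb^\times$ with $\mu^n=\lambda$. Such a $\mu$ exists because $\fpb^\times$ is $n$-divisible for every integer $n\geq 1$: if we write $n=p^a n'$ with $(n',p)=1$, then $n'$-th roots exist in the finite cyclic group $\langle\lambda\rangle$ (of order prime to $p$), and $p^a$-th roots exist because the Frobenius $x\mapsto x^p$ is surjective on $\fpb^\times$. Consider then the unramified character $\mu_{1,\mu}:\gal_F\to\fpb^\times$ sending $\textnormal{Fr}_q$ to $\mu$. Since $\textnormal{Fr}_q^n$ is the geometric Frobenius of $F_n$ and $\mu_{1,\mu}$ is unramified, restriction to $\gal_{F_n}$ gives $\mu_{1,\mu}|_{\gal_{F_n}}=\mu_{n,\mu^n}=\mu_{n,\lambda}$.

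Next I would apply the projection formula for induction (which holds in this smooth/continuous setting): for any continuous character $\eta$ of $\gal_F$ and any representation $V$ of $\gal_{F_n}$,
\[
\textnormal{ind}_{\gal_{F_n}}^{\gal_F}(V)\otimes\eta \;\cong\;\textnormal{ind}_{\gal_{F_n}}^{\gal_F}\bigl(V\otimes\eta|_{\gal_{F_n}}\bigr).
\]
Taking $V=\omega_n^r$ and $\eta=\mu_{1,\mu}$, this yields
\[
\textnormal{ind}_{\gal_{F_n}}^{\gal_F}(\omega_n^r)\otimes\mu_{1,\mu}\;\cong\;\textnormal{ind}_{\gal_{F_n}}^{\gal_F}\bigl(\omega_n^r\cdot\mu_{n,\lambda}\bigr)\;=\;\textnormal{ind}_{\gal_{F_n}}^{\gal_F}(\mu_{n,\lambda}\omega_n^r).
\]

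Finally, I would observe that tensoring a representation by a one-dimensional character amounts to multiplying by a scalar matrix, which is the identity in $\textnormal{PGL}_n(\fpb)$. Therefore projectivizing the above isomorphism gives the desired equivalence
\[
\llbracket\textnormal{ind}_{\gal_{F_n}}^{\gal_F}(\mu_{n,\lambda}\omega_n^r)\rrbracket\;\sim\;\llbracket\textnormal{ind}_{\gal_{F_n}}^{\gal_F}(\omega_n^r)\rrbracket.
\]
There is no real obstacle here; the only point worth checking carefully is the divisibility of $\fpb^\times$, which is what allows the twist on $\gal_{F_n}$ to be extended to an unramified character of $\gal_F$. The projection formula and the triviality of scalar twists under projectivization then do the rest.
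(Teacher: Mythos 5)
Your proof is correct and follows essentially the same route as the paper: pick an $n$-th root $\sqrt[n]{\lambda}$ of $\lambda$, recognize $\mu_{n,\lambda}$ as the restriction of the unramified character $\mu_{1,\sqrt[n]{\lambda}}$ of $\gal_F$, apply the projection formula, and observe that character twists are invisible after projectivization. You spell out the existence of the $n$-th root in $\fpb^\times$, which the paper leaves implicit, but there is no substantive difference.
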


\begin{proof}
 Let $\sqrt[n]{\lambda}$ denote an $n^{\textnormal{th}}$ root of $\lambda$.  We then have
$$\llbracket\textnormal{ind}_{\gal_{F_n}}^{\gal_F}(\mu_{n,\lambda}\omega_n^r)\rrbracket\sim\llbracket\textnormal{ind}_{\gal_{F_n}}^{\gal_F}(\omega_n^r)\otimes \mu_{1,\sqrt[n]{\lambda}}\rrbracket\sim\llbracket\textnormal{ind}_{\gal_{F_n}}^{\gal_F}(\omega_n^r)\rrbracket.$$\end{proof}

Since we are interested in irreducible projective Galois representations, the above results imply we only need to consider representations of the form $\llbracket\textnormal{ind}_{\gal_{F_n}}^{\gal_F}(\omega_n^r)\rrbracket$ with $r$ primitive.  

\begin{defn}
 Let $\sigma$ be an irreducible projective Galois representation of dimension $n$.  We will say a Galois representation $\rho$ is a \emph{lift of} $\sigma$ if $\rho$ is of the form 
$$\rho = \textnormal{ind}_{\gal_{F_n}}^{\gal_F}(\omega_n^r)$$
and $\llbracket\rho\rrbracket\sim \sigma$.  
\end{defn}

Note that by Theorem \ref{tatesthm} and Lemma \ref{lambda=1}, such a lift always exists.  Moreover, any lift of $\llbracket\textnormal{ind}_{\gal_{F_n}}^{\gal_F}(\omega_n^r)\rrbracket$ is of the form 
$$\textnormal{ind}_{\gal_{F_n}}^{\gal_F}(\omega_n^{q^ar})\otimes\omega_1^m\cong \textnormal{ind}_{\gal_{F_n}}^{\gal_F}(\omega_n^{q^ar + m[n]}) \cong \textnormal{ind}_{\gal_{F_n}}^{\gal_F}(\omega_n^{q^a(r + m[n])}).$$
Hence, the representations $\{\textnormal{ind}_{\gal_{F_n}}^{\gal_F}(\omega_n^{r + m[n]})\}_{0\leq m < q - 1}$ constitute all lifts of $\llbracket\textnormal{ind}_{\gal_{F_n}}^{\gal_F}(\omega_n^r)\rrbracket$, up to isomoprhism (and possibly with repetition).  As the group $\bbZ/(q - 1)\bbZ$ acts transitively on this set by cyclically permuting the $m$ parameter, we see that any irreducible projective Galois representation has $\frac{q - 1}{d'}$ isomorphism classes of lifts, where $d'$ divides $q - 1$.  This discussion also shows we may always choose a lift satisfying $0 \leq r < [n]$.

\begin{propn}\label{rprim}
 Let $d$ be a divisor of $n$ and let $r\in\mathbb{Z}/(q^n - 1)\mathbb{Z}$ with $0\leq r < [n]$.  The number of such $r$ which are primitive and satisfy $q^dr\equiv r~(\textnormal{mod}~[n])$ is equal to 
$$g(d) = \sum_{e|n} \mu\left(\frac{n}{e}\right)[(d,e)]\left(\frac{e}{(d,e)}, q - 1\right).$$
\end{propn}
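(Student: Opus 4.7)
The plan is to lift the count from $\mathbb{Z}/[n]\mathbb{Z}$ to $\mathbb{Z}/(q^n-1)\mathbb{Z}$, apply M\"obius inversion over $q$-orbit sizes modulo $q^n-1$, and descend.

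First I would observe that since $[n]$ divides $q^n-1$, and since both the primitivity condition and the congruence $(q^d-1)r \equiv 0 \pmod{[n]}$ depend only on $r$ modulo $[n]$, each primitive $r$ satisfying the given condition lifts to exactly $q-1$ primitive $\tilde{r} \in \mathbb{Z}/(q^n-1)\mathbb{Z}$ with the analogous property. Moreover, primitivity of $r$ in the sense of the proposition is equivalent to $\tilde{r}$ having $q$-orbit of size $n$ modulo $q^n-1$, i.e., primitivity in the classical sense of Proposition \ref{galreps}; both conditions reduce to $\tilde{r} \not\equiv 0 \pmod{[n]/[d']}$ for every proper divisor $d'$ of $n$. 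It therefore suffices to count primitive $\tilde{r}$ in the subgroup
$$\tilde{K}_d := \{\tilde{r} \in \mathbb{Z}/(q^n-1)\mathbb{Z} : (q^d-1)\tilde{r} \equiv 0 \pmod{[n]}\}$$
and divide by $q-1$.

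Next I would identify $\tilde{K}_d$ as the preimage of the order-$(q-1)$ subgroup $\langle [n]\rangle$ under multiplication by $q^d-1$ on $\mathbb{Z}/(q^n-1)\mathbb{Z}$. A short calculation (the image of multiplication by $q^d-1$ has order $[n]/[d]$, and meets $\langle [n]\rangle$ in a subgroup of order $\gcd([n]/[d], q-1) = \gcd(n/d, q-1)$) shows that $\tilde{K}_d$ is cyclic of order $(q^d-1)\gcd(n/d, q-1)$. For each divisor $e\mid n$, let $\tilde{O}_e$ denote the unique cyclic subgroup of $\mathbb{Z}/(q^n-1)\mathbb{Z}$ of order $q^e-1$, consisting of elements whose $q$-orbit size divides $e$. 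Since every $q$-orbit size divides $n$, standard M\"obius inversion gives
$$\#\{\tilde{r} \in \tilde{K}_d : \tilde{r}~\text{primitive}\} \;=\; \sum_{e \mid n} \mu(n/e)\, |\tilde{K}_d \cap \tilde{O}_e|.$$

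The hard part will be the arithmetic identity
$$|\tilde{K}_d \cap \tilde{O}_e| \;=\; (q^{(d,e)}-1)\,\gcd(e/(d,e),\,q-1).$$
Since both subgroups are cyclic, this amounts to the gcd calculation $\gcd((q^d-1)\gcd(n/d,q-1),\, q^e-1)$. I plan to attack it by first factoring out the common factor $q-1$ to reduce to $\gcd([d]B, [e])$ with $B = \gcd(n/d, q-1)$; then using the identity $\gcd([d],[e]) = [(d,e)]$ (a consequence of the classical $\gcd(q^d-1,q^e-1) = q^{(d,e)}-1$) to pull out $[(d,e)]$ and reduce to $\gcd(B, e/(d,e))$; and finally observing that $\mathrm{lcm}(d,e) \mid n$ forces $e/(d,e) \mid n/d$, so that together with $B \mid q-1$ one obtains $\gcd(B, e/(d,e)) = \gcd(e/(d,e), q-1)$. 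Substituting back into the M\"obius sum, dividing by $q-1$, and replacing $q^{(d,e)}-1$ by $(q-1)[(d,e)]$ then delivers exactly $g(d)$.
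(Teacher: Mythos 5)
Your proposal is correct and takes a genuinely different route from the paper's. The paper works directly with integers $r$ in $\{0,\ldots,[n]-1\}$: it parametrizes the solutions of $(q^d-1)r\equiv 0\pmod{[n]}$ as the multiples of $[n]\big/\bigl([d]\,(\tfrac{n}{d},q-1)\bigr)$, indexed by a parameter $s$, translates the subsidiary condition $r\equiv 0\pmod{[n]/[e]}$ into a congruence on $s$, and then invokes inclusion-exclusion through Lemma~\ref{comb}. You instead lift to $\mathbb{Z}/(q^n-1)\mathbb{Z}$ (correctly noting both conditions depend only on $r$ modulo $[n]$, so each solution has exactly $q-1$ lifts), identify the relevant set as a cyclic subgroup $\tilde{K}_d$, and replace the per-divisor counts in the M\"obius inversion by orders of intersections of cyclic subgroups, which are just gcds of orders. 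Both proofs are inclusion-exclusion at heart and ultimately produce the same per-divisor count $[(d,e)]\bigl(\tfrac{e}{(d,e)},q-1\bigr)$; what your version buys is that the structural part (order of a preimage, intersection of cyclic subgroups) is clean textbook group theory, at the cost of a slightly longer final arithmetic identity. That identity is right, but the middle step (``pull out $[(d,e)]$ and reduce to $\gcd(B,e/(d,e))$'') silently uses two facts worth spelling out: the general identity $\gcd(ab,c)=\gcd(a,c)\cdot\gcd\!\bigl(b,\,c/\gcd(a,c)\bigr)$ yields $\gcd([d]B,[e])=[(d,e)]\cdot\gcd\!\bigl(B,\,[e]/[(d,e)]\bigr)$, and one then replaces $[e]/[(d,e)]$ by $e/(d,e)$ inside the gcd with $B$, which is legitimate because $B\mid q-1$ and $[e]/[(d,e)]\equiv e/(d,e)\pmod{q-1}$.
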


\begin{proof}
Assume we have $(q^d - 1)r \equiv 0~(\textnormal{mod}~ [n])$; this easily implies $r \equiv s\frac{[n]}{[d]\left(\frac{n}{d}, q - 1\right)}~(\textnormal{mod}~q^n - 1)$ with $0\leq s < [d](\frac{n}{d}, q - 1)$.  %cf. Apostol, Ex. 1.24.
It remains to determine which of these elements are primitive.  Let $e$ be a proper divisor of $n$, and assume 
$$r \equiv s\frac{[n]}{[d]\left(\frac{n}{d}, q - 1\right)} \equiv 0~\left(\textnormal{mod}~\frac{[n]}{[e]}\right);$$  
%We have
%\begin{eqnarray*}
% \left(\frac{[n]}{[d]\left(\frac{n}{d},q - 1\right)}, \frac{[n]}{[e]}\right) & = & \frac{(q^n - 1)}{(q^d - 1)(q^e - 1)\left(\frac{n}{d}, q - 1\right)}\left(q^e - 1, (q^d - 1)\left(\frac{n}{d}, q - 1\right)\right)\\
% & \stackrel{\textnormal{Apostol, Ex. 1.24}}{=} & \frac{(q^n - 1)}{(q^d - 1)(q^e - 1)\left(\frac{n}{d}, q - 1\right)}(q^e - 1, q^d - 1)\left(\frac{q^e - 1}{(q^e - 1, q^d - 1)}, \frac{n}{d}, q - 1\right)\\
% & = & \frac{(q^n - 1)(q^{(d,e)} - 1)}{(q^d - 1)(q^e - 1)\left(\frac{n}{d}, q - 1\right)}\left(\frac{q^e - 1}{q^{(d,e) - 1}}, \frac{n}{d}, q - 1\right)\\
% & = & \frac{(q^n - 1)(q^{(d,e)} - 1)}{(q^d - 1)(q^e - 1)\left(\frac{n}{d}, q - 1\right)}\left(\frac{e}{(d,e)}, \frac{n}{d}, q - 1\right)\\
% & = &  \frac{[n][(d,e)]}{[d][e]\left(\frac{n}{d}, q - 1\right)}\left(\frac{e}{(d,e)}, q - 1\right).
%\end{eqnarray*}
again, one easily checks that this is equivalent to 
$$s \equiv 0~\left(\textnormal{mod}~\frac{[d]\left(\frac{n}{d},q - 1\right)}{[(d,e)]\left(\frac{e}{(d,e)},q - 1\right)}\right).$$
Hence, the number of $0 \leq r < [n]$ satisfying $q^dr \equiv r~(\textnormal{mod}~[n])$ and $r\equiv 0~\left(\textnormal{mod}~\frac{[n]}{[e]}\right)$ is 
$$[(d,e)]\left(\frac{e}{(d,e)}, q - 1\right).$$
We use the inclusion-exclusion principle and Lemma \ref{comb} to conclude (cf. proof of Proposition \ref{orbdivd}).
\end{proof}

\begin{cor}\label{cormind}
 Let $d$ be a divisor of $n$, and let $r\in\mathbb{Z}/(q^n - 1)\mathbb{Z}$ with $0\leq r < [n]$.  The number of such $r$ which are primitive and such that $d$ is the minimal integer satisfying $q^dr \equiv r~(\textnormal{mod}~ [n])$ is
$$\sum_{e|d}\mu\left(\frac{d}{e}\right)g(e).$$
\end{cor}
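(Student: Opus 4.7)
The plan is to recognize the statement as a routine Möbius inversion of the count already provided by Proposition~\ref{rprim}. Writing $f(d)$ for the quantity to be computed and $g(d)$ for the quantity of Proposition~\ref{rprim}, I want to establish the relation $g(d)=\sum_{e\mid d}f(e)$ for every divisor $d$ of $n$, so that $f(d)=\sum_{e\mid d}\mu(d/e)g(e)$ follows immediately.

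To set this up, for each primitive $r$ with $0\leq r<[n]$, let $e(r)$ denote the smallest positive integer satisfying $q^{e(r)}r\equiv r\pmod{[n]}$. First I would observe that $e(r)$ divides $n$: since $q^n\equiv 1\pmod{q^n-1}$ and $[n]\mid q^n-1$, we have $q^n r\equiv r\pmod{[n]}$, so $n$ belongs to the set $S_r=\{k\in\mathbb{Z}_{>0}:q^k r\equiv r\pmod{[n]}\}$. Next I would verify that $e(r)$ divides every element of $S_r$: given $d\in S_r$, write $d=ke(r)+s$ with $0\leq s<e(r)$; iterating the defining relation for $e(r)$ gives $q^{ke(r)}r\equiv r$, hence $q^s r\equiv q^d r\equiv r\pmod{[n]}$, which by minimality forces $s=0$. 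In particular, for any divisor $d$ of $n$, the condition $q^d r\equiv r\pmod{[n]}$ is equivalent to $e(r)\mid d$.

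Now I would translate Proposition~\ref{rprim} into a statement about $f$. For a divisor $d$ of $n$, the primitive $r$ counted by $g(d)$ are exactly those with $e(r)\mid d$, and since $e(r)\mid d\mid n$ the possible values of $e(r)$ are themselves divisors of $d$. Partitioning according to $e(r)$ yields
$$g(d)=\sum_{e\mid d}f(e),$$
and Möbius inversion gives the asserted formula
$$f(d)=\sum_{e\mid d}\mu\!\left(\frac{d}{e}\right)g(e).$$

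There is no serious obstacle in this argument; the only point requiring a small verification is that the stabilizer exponents form a set closed under divisibility (so that Möbius inversion applies cleanly), which is the elementary division-with-remainder computation above. Everything else is bookkeeping, in the same spirit as the inclusion-exclusion step at the end of the proof of Proposition~\ref{rprim}.
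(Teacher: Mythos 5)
Your proposal is correct and is exactly the Möbius-inversion argument the paper implicitly invokes in passing from Proposition~\ref{rprim} to Corollary~\ref{cormind}. The verification that the minimal exponent $e(r)$ divides every element of $S_r$ (via division with remainder) is the only non-bookkeeping step, and you carry it out cleanly.
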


We define a map from isomorphism classes of irreducible $n$-dimensional projective Galois representations $\sigma$ to $\bbZ_{>0}$ by the formula
$$d_\sigma:=\min\{e\in \bbZ_{>0}: q^er\equiv r~(\textnormal{mod}~[n])\},$$
where $\textnormal{ind}_{\gal_{F_n}}^{\gal_F}(\omega_n^r)$ is a fixed lift of $\sigma$.  One easily checks that this definition is independent of the choice of lift.  

\begin{lem}\label{rtoprojrep}
The integer $d_\sigma$ divides $n$ and $\frac{n}{d_\sigma}$ divides $q - 1$.  Moreover, the projective representation $\sigma$ has exactly $d_\sigma\frac{q - 1}{n}$ isomorphism classes of lifts. 
\end{lem}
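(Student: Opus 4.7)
The plan is to fix a primitive $r$ with $0 \leq r < [n]$ so that $\sigma \sim \llbracket\textnormal{ind}_{\gal_{F_n}}^{\gal_F}(\omega_n^r)\rrbracket$, and handle the three claims in turn. For $d_\sigma \mid n$: since $\omega_n$ has order dividing $q^n-1$ and $[n] \mid q^n-1$, the congruence $q^n r \equiv r \pmod{[n]}$ is automatic, so $\{e \in \mathbb{Z} : q^e r \equiv r \pmod{[n]}\}$ is a subgroup of $\mathbb{Z}$ containing $n$, and its positive generator $d_\sigma$ divides $n$.

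For $\tfrac{n}{d_\sigma} \mid q-1$, I would factor $[n] = [d_\sigma]\cdot[n/d_\sigma]_{q^{d_\sigma}}$, where $[m]_Q := \tfrac{Q^m-1}{Q-1}$, and $q^{d_\sigma}-1 = (q-1)[d_\sigma]$. The congruence $[n/d_\sigma]_{q^{d_\sigma}} \equiv n/d_\sigma \pmod{q-1}$ yields $\gcd([n], q^{d_\sigma}-1) = [d_\sigma]\,g$ with $g := (\tfrac{n}{d_\sigma}, q-1)$, so $(q^{d_\sigma}-1)r \equiv 0 \pmod{[n]}$ forces $[n]/([d_\sigma]\,g) \mid r$. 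Suppose $g < n/d_\sigma$, and set $e := d_\sigma g$, a proper divisor of $n$; from $g \mid q-1$ one gets $g \mid [g]_{q^{d_\sigma}}$, hence $[d_\sigma]\,g \mid [d_\sigma][g]_{q^{d_\sigma}} = [e]$, so $[n]/[e] \mid r$, contradicting primitivity. Thus $\tfrac{n}{d_\sigma} \mid q-1$ and one may write $r = s\cdot [n]/([d_\sigma]\cdot\tfrac{n}{d_\sigma})$ for a unique $0 \leq s < [d_\sigma]\cdot\tfrac{n}{d_\sigma}$. The same obstructing-divisor trick, now with $e' := d_\sigma\gcd(s,\tfrac{n}{d_\sigma})$, yields $\gcd(s,\tfrac{n}{d_\sigma})=1$.

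For the lift count, by Proposition \ref{galreps}(2) together with the identity $q^a[n] \equiv [n] \pmod{q^n-1}$, two lifts indexed by $m, m' \in \{0, \ldots, q-2\}$ are isomorphic iff $(m'-m)[n] \equiv (q^a-1)r \pmod{q^n-1}$ for some $a\in\mathbb{Z}$. The right-hand side lies in the subgroup generated by $[n]$ modulo $q^n-1$ precisely when $d_\sigma \mid a$; writing $a=bd_\sigma$ and using $[b]_{q^{d_\sigma}} \equiv b \pmod{q-1}$, the admissible differences $m'-m$ mod $(q-1)$ form the cyclic subgroup generated by $\alpha := (q^{d_\sigma}-1)r/[n] \bmod (q-1)$. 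Hence the number of isomorphism classes of lifts equals $\gcd(\alpha, q-1)$. Substituting $r = s[n]/([d_\sigma]\cdot\tfrac{n}{d_\sigma})$ gives $\alpha \equiv \tfrac{(q-1)d_\sigma}{n}\,s \pmod{q-1}$, so $\gcd(\alpha,q-1) = \tfrac{(q-1)d_\sigma}{n}\gcd(s,\tfrac{n}{d_\sigma}) = \tfrac{(q-1)d_\sigma}{n}$, as claimed.

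The main obstacle is the divisibility manipulation of the second step (also reused for $\gcd(s,\tfrac{n}{d_\sigma})=1$): extracting from the abstract minimality defining $d_\sigma$ and the primitivity of $r$ the sharp structural decomposition $r = s\,[n]/([d_\sigma]\cdot\tfrac{n}{d_\sigma})$ with $\gcd(s,\tfrac{n}{d_\sigma})=1$. The key identities are $[n] = [d_\sigma]\cdot[n/d_\sigma]_{q^{d_\sigma}}$ and the divisibility $g \mid [g]_{q^{d_\sigma}}$ whenever $g \mid q-1$; once these are in hand, the final count is a routine computation in $\mathbb{Z}/(q-1)\mathbb{Z}$.
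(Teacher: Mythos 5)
Your route is genuinely different from the paper's (which iterates the congruence $q^{d_\sigma}r\equiv r+m[n]\pmod{q^n-1}$ and compares with the parameter $d'$ indexing lifts), and the overall structure is sound — the $q$-analog factorization $[n]=[d_\sigma]\,[n/d_\sigma]_{q^{d_\sigma}}$, the identification $\gcd([n],q^{d_\sigma}-1)=[d_\sigma]\,(\tfrac{n}{d_\sigma},q-1)$, and the final count via the cyclic subgroup generated by $\alpha=(q^{d_\sigma}-1)r/[n]\bmod(q-1)$ are all correct, and in fact your $\alpha$ is exactly the paper's $m$ and your $s$ is the paper's $m'$.

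However, there is a gap in the step establishing $\gcd(s,\tfrac{n}{d_\sigma})=1$. You propose to rerun the obstructing-divisor argument with $e':=d_\sigma\gcd(s,\tfrac{n}{d_\sigma})$, but this is not ``the same trick.'' In the first application you already \emph{had} the divisibility $[n]/([d_\sigma]g)\mid r$ coming directly from $(q^{d_\sigma}-1)r\equiv0\pmod{[n]}$, so it sufficed to note $[d_\sigma]g\mid[e]$. Here the analogous divisibility $[n]/([d_\sigma]g')\mid r$ (with $g'=\gcd(s,\tfrac{n}{d_\sigma})$) is precisely what is in question, and it is false in general. Writing $c=n/d_\sigma$, $s=g's''$, $c=g'c''$ with $\gcd(s'',c'')=1$, and $Q=q^{d_\sigma}$, one computes $r\big/([n]/[d_\sigma t])=s[t]_Q/c$ for $t\mid c$, and since $[t]_Q\equiv t\pmod{c}$, this is an integer iff $c''\mid t$. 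So the correct obstructing divisor is $e'=d_\sigma c''=n/g'$ (proper precisely when $g'>1$), not $d_\sigma g'$; taking $e'=d_\sigma g'$ requires $c''\mid g'$, which fails e.g.\ for $d_\sigma=1$, $n=6$, $q=7$, $s=2$ (where $g'=2$, $c''=3$). With $e'=n/g'$ the contradiction goes through and the rest of your computation of $\gcd(\alpha,q-1)=\tfrac{(q-1)d_\sigma}{n}$ is correct. The paper's own proof of $\gcd(m',n/d_\sigma)=1$ avoids this by instead noting that $g'>1$ would produce some $0<a<n/d_\sigma$ with $q^{ad_\sigma}r\equiv r\pmod{q^n-1}$, contradicting primitivity more directly.
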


\begin{proof}
Let $\textnormal{ind}_{\gal_{F_n}}^{\gal_F}(\omega_n^r)$ be a fixed lift of $\sigma$, and define $\cS_\sigma := \{e\in \bbZ_{>0}:q^er \equiv r~(\textnormal{mod}~[n])\}$; this definition is independent of the choice of lift.  Additionally, the following properties are easily verified:  $n\in \cS_\sigma$, and if $e_1, e_2\in \cS_\sigma$, then $(e_1,e_2)\in \cS_\sigma$.  The properties above and minimality of $d_\sigma$ imply that if $e\in \cS_\sigma$, then $d_\sigma\leq (d_\sigma,e)\leq d_\sigma$, so $d_\sigma$ divides $e$.  In particular, $d_\sigma$ divides $n$, and we have $\cS_\sigma = d_\sigma\bbZ_{>0}$.

Now, by definition of $\cS_\sigma$, we have $q^{d_\sigma}r \equiv r + m[n]~(\textnormal{mod}~q^n - 1)$ for some $m\in \bbZ/(q - 1)\bbZ$.  Applying this equation recursively yields
\begin{equation}\label{rtoprojrepform}
q^{ad_\sigma}r \equiv r + am[n]~(\textnormal{mod}~q^n - 1),
\end{equation}
for $a\in \bbZ$.  Taking $a = q - 1$ in equation \eqref{rtoprojrepform} yields
$$q^{(q - 1)d_\sigma}r \equiv r~(\textnormal{mod}~q^n - 1),$$
which implies that $n$ divides $(q - 1)d_\sigma$ by primitivity of $r$.

Taking $a = \frac{n}{d_\sigma}$ in equation \eqref{rtoprojrepform} gives
$$\frac{n}{d_\sigma}m[n]\equiv 0~(\textnormal{mod}~q^n - 1),$$
which is equivalent to $m \equiv 0~\left(\textnormal{mod}~\frac{d_\sigma(q - 1)}{n}\right)$ (here we use that $\frac{n}{d_\sigma}$ divides $q - 1$ from above).  Let us write $m\equiv m'\frac{d_\sigma(q - 1)}{n}$, with $m'\in \bbZ/(n/d_\sigma)\bbZ$, so that equation \eqref{rtoprojrepform} becomes
$$q^{ad_\sigma}r \equiv r + am'\frac{d_\sigma(q - 1)}{n}[n]~(\textnormal{mod}~q^n - 1).$$
We claim $m'$ and $\frac{n}{d_\sigma}$ are relatively prime.  If not, then there would exist $0< a < \frac{n}{d_\sigma}$ such that $am'\equiv 0~(\textnormal{mod}~\frac{n}{d_\sigma})$, which implies $q^{ad_\sigma}r\equiv r~\left(\textnormal{mod}~q^n - 1\right)$, contradicting the primitivity of $r$.  Therefore, we can choose $a'$ such that $a'm' \equiv 1~\left(\textnormal{mod}~\frac{n}{d_\sigma}\right)$, which gives
$$q^{a'd_\sigma}r \equiv r + \frac{d_\sigma(q - 1)}{n}[n]~(\textnormal{mod}~q^n - 1).$$
In particular, this shows that
\begin{equation}\label{rtoprojrepcong}
\textnormal{ind}_{\gal_{F_n}}^{\gal_F}(\omega_n^r)\otimes \omega_1^{d_\sigma(q - 1)/n}\cong \textnormal{ind}_{\gal_{F_n}}^{\gal_F}(\omega_n^r).
\end{equation}

The discussion preceding Proposition \ref{rprim} shows that there exists an integer $d'$ dividing $q - 1$ such that 
$$\textnormal{ind}_{\gal_{F_n}}^{\gal_F}(\omega_n^r),~ \textnormal{ind}_{\gal_{F_n}}^{\gal_F}(\omega_n^r)\otimes\omega_1,\ldots,~ \textnormal{ind}_{\gal_{F_n}}^{\gal_F}(\omega_n^r)\otimes\omega_1^{((q - 1)/d') - 1}$$
are a full and pairwise nonisomorphic set of representatives of lifts of $\llbracket\textnormal{ind}_{\gal_{F_n}}^{\gal_F}(\omega_n^r)\rrbracket$.  This implies $\textnormal{ind}_{\gal_{F_n}}^{\gal_F}(\omega_n^r)\otimes\omega_1^{(q - 1)/d'}\cong \textnormal{ind}_{\gal_{F_n}}^{\gal_F}(\omega_n^r)$, meaning
$$q^{bd_\sigma}r\equiv r + \frac{q - 1}{d'}[n]~(\textnormal{mod}~q^n - 1)$$
for some $b\in \bbZ$ (note that the exponent of $q$ on the left-hand side must be an element of $\cS_\sigma$).  Proceeding as above, we iterate this relation $\frac{n}{d_\sigma}$ times to obtain
$$\frac{n}{d_\sigma}\frac{q - 1}{d'}[n]\equiv 0~(\textnormal{mod}~q^n - 1),$$
which is equivalent to saying that $d_\sigma d'$ divides $n$.  On the other hand, the definition of $d'$ and equation \eqref{rtoprojrepcong} show that $\frac{q - 1}{d'}$ divides $\frac{d_\sigma(q - 1)}{n}$.  These two facts show that $d' = \frac{n}{d_\sigma}$.  
\end{proof}

\begin{cor}\label{numprojreps}
 Let $d$ be a divisor of $n$.  The number of isomorphism classes of irreducible projective Galois representations $\sigma$ of dimension $n$ for which $d_\sigma = d$ is equal to 
$$h(d) = \frac{1}{d}\sum_{e|d}\mu\left(\frac{d}{e}\right)g(e).$$
\end{cor}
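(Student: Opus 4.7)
The plan is to realize $h(d)$ as $\frac{1}{d}$ times the count in Corollary \ref{cormind}, by considering the natural map $\Psi$ sending a primitive exponent $r\in\bbZ/(q^n - 1)\bbZ$ with $0\leq r < [n]$ to the equivalence class of $\llbracket\textnormal{ind}_{\gal_{F_n}}^{\gal_F}(\omega_n^r)\rrbracket$, and showing that the fiber of $\Psi$ over any irreducible $\sigma$ with $d_\sigma = d$ has size exactly $d$.  Surjectivity of $\Psi$ is immediate from Proposition \ref{galreps}, Theorem \ref{tatesthm}, and Lemma \ref{lambda=1}, and the invariant $d_\sigma$ depends only on $\Psi(r)$ by the discussion preceding Lemma \ref{rtoprojrep}.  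Granting the fiber-size claim, the corollary follows at once by dividing the count in Corollary \ref{cormind} by $d$.

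To compute the fiber over a fixed $\sigma$ with $d_\sigma = d$, I would work in two stages.  First, Lemma \ref{rtoprojrep} asserts that $\sigma$ admits exactly $d(q - 1)/n$ isomorphism classes of lifts to genuine Galois representations; by Proposition \ref{galreps}(2), each such lift corresponds to a single orbit in $\bbZ/(q^n - 1)\bbZ$ under multiplication by $q$, and the primitivity condition forces these orbits to have size exactly $n$ (since $q^a s \equiv s \pmod{q^n - 1}$ amounts to $s \equiv 0 \pmod{[n]/[(a,n)]}$, which primitivity rules out for $0 < a < n$).  Hence the total number of primitive $s \in \bbZ/(q^n - 1)\bbZ$ with $\llbracket\textnormal{ind}_{\gal_{F_n}}^{\gal_F}(\omega_n^s)\rrbracket \sim \sigma$ equals $d(q - 1)$.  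Second, this set of exponents is stable under the shift $s \mapsto s + [n]$, since that shift corresponds to twisting by the unramified character $\omega_1$ (invisible projectively); and since $[n](q - 1) = q^n - 1$, each coset of $[n]\bbZ/(q^n - 1)\bbZ$ inside $\bbZ/(q^n - 1)\bbZ$ has size $q - 1$ and admits a unique representative in $\{0, 1, \ldots, [n] - 1\}$.  The fiber of $\Psi$ over $\sigma$ therefore has size $d(q - 1)/(q - 1) = d$, as required.

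The main obstacle is the orbit-size bookkeeping carried out in the previous paragraph: one must disentangle the three equivalence relations on exponents (isomorphism of the induced representations via $q$-multiplication, unramified twist via $[n]$-shift, and projectivization), and verify that their combined action on the set of primitive exponents corresponding to $\sigma$ decomposes cleanly into $d$ orbits of size $q - 1$ under the $[n]$-shift.  Once this is in hand, the formula for $h(d)$ follows immediately by dividing the count in Corollary \ref{cormind} by $d$.
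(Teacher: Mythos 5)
Your proposal is correct and follows essentially the same strategy as the paper: both count the fibers of the map $\textnormal{pr}$ (your $\Psi$) from primitive residues $0\leq r < [n]$ to projective representations and then apply Corollary \ref{cormind}. The only difference is in establishing that the fiber over $\sigma$ has size $d_\sigma$: the paper gets this directly from the definition of $d_\sigma$ (the fiber is the $q$-orbit of $r$ in $\bbZ/[n]\bbZ$, whose period is $d_\sigma$), whereas you route it through Lemma \ref{rtoprojrep} and a double count in $\bbZ/(q^n-1)\bbZ$ --- both are valid, though note $\omega_1$ is not an unramified character (what matters for your projective-invisibility step is only that it is a character of $\gal_F$ rather than merely of $\gal_{F_n}$).
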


\begin{proof}
Let $\textnormal{pr}$ denote the surjective map from the set of primitive integers $r$ such that $0\leq r < [n]$, to the set of isomorphism classes of irreducible $n$-dimensional projective Galois representations given by $\textnormal{pr}(r) = \llbracket\textnormal{ind}_{\gal_{F_n}}^{\gal_F}(\omega_n^r)\rrbracket$.  The sizes of the fibers of the map $r\longmapsto d_{\textnormal{pr}(r)}$ are given by Corollary \ref{cormind}.  Furthermore, by definition of the integer $d_\sigma$, %and the fact that $\llbracket\textnormal{ind}_{\gal_{F_n}}^{\gal_F}(\omega_n^r)\rrbracket \sim \llbracket\textnormal{ind}_{\gal_{F_n}}^{\gal_F}(\omega_n^{r'})\rrbracket \Longleftrightarrow r' \equiv q^ar + m[n]$
the fiber of the map $\textnormal{pr}$ over the representation $\sigma$ has size $d_\sigma$.  Combining these two facts gives the corollary.  
\end{proof}

\begin{cor}\label{finalcor}
 Let $d$ be a divisor of $n$.  The number of regular supersingular $L$-packets of $\cH_\s$-modules of size $d$ is equal to the number of (isomorphism classes of) irreducible projective Galois representations $\sigma$ of dimension $n$ for which $d_\sigma = d$.  In particular, the number of regular supersingular $L$-packets of $\cH_\s$-modules is equal to the number of (isomorphism classes of) irreducible projective Galois representations of dimension $n$.  
\end{cor}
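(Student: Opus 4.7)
The plan is simply to combine the two counting results already established in the preceding parts of this section. By Corollary \ref{numorbits}, the number of regular supersingular $L$-packets of $\cH_\s$-modules of size $d$ equals
$$h(d)=\frac{1}{d}\sum_{e|d}\mu\!\left(\frac{d}{e}\right)g(e),$$
while by Corollary \ref{numprojreps}, the number of isomorphism classes of irreducible projective Galois representations $\sigma$ of dimension $n$ with $d_\sigma=d$ is given by the same quantity $h(d)$. The first assertion of the corollary is therefore immediate.

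For the ``in particular'' statement I would sum over $d\mid n$. On the Hecke side, every $L$-packet is an orbit of $\omega^{\bbZ}$, and since $\omega^n$ lies in (the image of) $W_\s(1)$ (so acts trivially on characters of $\cH_\s$), the size of any $L$-packet divides $n$; consequently
$$\sum_{d\mid n}h(d)$$
counts \emph{all} regular supersingular $L$-packets of $\cH_\s$-modules. On the Galois side, Lemma \ref{rtoprojrep} tells us that $d_\sigma\mid n$ for every irreducible $n$-dimensional projective Galois representation $\sigma$, so the same sum counts all such $\sigma$ up to isomorphism. This yields the final equality.

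There is essentially no obstacle: all the real combinatorial and number-theoretic content has been done in Propositions \ref{orbdivd} and \ref{rprim}, and in Corollaries \ref{numorbits}, \ref{cormind}, and \ref{numprojreps}. The only thing to verify carefully in the write-up is that the two indexing conventions genuinely agree (i.e.\ that ``size of the $L$-packet'' on the one hand and ``$d_\sigma$'' on the other are the invariants matched by the common value $h(d)$, as opposed to, say, $n/d$), and that every regular supersingular $L$-packet and every irreducible projective Galois representation is accounted for in the respective sum over $d\mid n$; both points are contained in the cited lemmas.
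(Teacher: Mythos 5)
Your proposal is correct and follows exactly the same route as the paper: combine Corollary \ref{numorbits} and Corollary \ref{numprojreps} to get the common count $h(d)$, then use Lemma \ref{rtoprojrep} (together with the fact that packet size divides $n$) to justify summing over $d\mid n$ for the ``in particular'' statement. Nothing is missing.
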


\begin{proof}
 This follows from Corollaries \ref{numorbits} and \ref{numprojreps}, and Lemma \ref{rtoprojrep}.  
\end{proof}

\begin{remark}
Lemma \ref{rtoprojrep} shows that the condition ``$d_\sigma = d$'' is related to the number of lifts of $\sigma$.
\end{remark}

\subsection{Comparison with Gro\ss e-Kl\"onne's Functor}\label{gk}

Gro\ss e-Kl\"onne has recently constructed a functor from the category of finite-length right $\cH_\bullet$-modules to the category of \'etale $(\varphi^r,\Gamma_0)$-modules (\cite{GK13}).  When applied to the group $\textnormal{GL}_n(\qp)$, his construction, composed with Fontaine's equivalence of categories, yields a bijection between isomorphism classes of (absolutely) simple, supersingular right $\cH$-modules of dimension $n$ and isomorphism classes of (absolutely) irreducible $n$-dimensional mod-$p$ representations of $\textnormal{Gal}(\overline{\mathbb{Q}}_p/\qp)$.  We now analyze these $(\varphi^r,\Gamma_0)$-modules for $\textnormal{SL}_n(\qp)$.  For this subsection only, we adhere to the notation of \emph{loc. cit.}; the reader should consult that article for precise statements and definitions.

We take $F = \qp, \oo = \zp$, with residue field $\fp$, and uniformizer $\varpi = p$.  We let $k$ denote the residue field in a fixed (sufficiently large) finite extension of $\qp$.  Recall that we have identified the apartments $A$ and $A_\s$, and we let $C$ denote the chamber in $A_\s$ corresponding the the Iwahori subgroup $I_\s$.  We choose a semiinfinite chamber gallery in $A_\s$ by setting, for $i\geq 0$,
$$C^{(i)} := (n_{n - 1}^{-1}\omega)^i.C,$$
and note that the action on $A_\s$ of
$$(n_{n - 1}^{-1}\omega)^n = \underbrace{n_{n - 1}^{-1}\omega\cdots n_{n - 1}^{-1}\omega}_{n~\textnormal{times}} = \omega^nn_{n - 1}^{-1}n_{n - 2}^{-1}\cdots n_1^{-1}n_0^{-1}$$
is the same as the action of 
$$\phi := n_{n - 1}^{-1}n_{n - 2}^{-1}\cdots n_1^{-1}n_0^{-1} \in G_\s.$$
We have $\phi.C^{(i)} = C^{(i + n)}$ by definition.  The choice of a chamber gallery and such an element $\phi$ provides us with a half tree $Y_\s\subset X_\s$ and a simplicial isomorphism between $Y_\s$ and ``the half tree of $\textnormal{PGL}_2(\qp)$'' (cf. \emph{loc. cit.}, Section 3).

To every simple supersingular $\cH_\s$-module $\chi_{\lambda,J}^\s$ we associate an $n$-tuple of integers 
$$(k_1,\ldots, k_{n - 1}, k_n)$$ 
as follows (cf. \emph{loc. cit.}, Section 8).  For $0\leq i \leq n - 1$, we let $0 \leq k_{i + 1} \leq p - 1$ satisfy
$$\lambda\circ\alpha_{n_i}^\vee(x^{-1}) = x^{k_{i + 1}}.$$
If $\lambda\circ \alpha_{n_i}^\vee$ is not the trivial character, then $k_{i + 1}$ is uniquely determined.  The condition of $\lambda\circ\alpha_{n_i}^\vee$ being equal to the trivial character is equivalent to $n_i\in S_\lambda$; in this case, we set $k_{i + 1} = p - 1$ if $n_i\in J$ and $k_{i + 1} = 0$ otherwise.

Tracing through the construction of \emph{loc. cit.}, we arrive at the following proposition.

\begin{propn}\label{phingamma0}
Let $\chi_{\textnormal{cyc}}:\Gamma\stackrel{\sim}{\longrightarrow}\zp^\times$ denote the cyclotomic character, and let $\Gamma_0 = \chi_{\textnormal{cyc}}^{-1}(1 + p\zp)$.  The \'etale $(\varphi^n,\Gamma_0)$-module $\mathbf{D}_{\lambda,J}$ associated to $\chi_{\lambda,J}^\s$ is one-dimensional over $k_\mathcal{E} = k((t))$, spanned by a vector $\vec{e}$, with actions given by
\begin{eqnarray*}
\varphi^n(\vec{e}) & = & (-1)^n\left(\prod_{j = 1}^{n}k_j!\right)^{-1}t^{-\sum_{j = 0}^{n - 1}(p - 1 - k_{n - j})p^j}\vec{e},\\
\gamma(\vec{e}) & = & \left(\frac{t}{(1 + t)^{\chi_{\textnormal{cyc}}(\gamma)} - 1}\right)^{\frac{1}{p^n - 1}\sum_{j = 0}^{n - 1}(p - 1 - k_{n - j})p^j}\vec{e},
\end{eqnarray*}
where $\gamma\in \Gamma_0$.  In particular, we see that distinct supersingular characters $\chi_{\lambda,J}^\s$ give rise to distinct $(\varphi^n,\Gamma_0)$-modules.  
\end{propn}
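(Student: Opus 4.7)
The plan is to apply Gro\ss e-Kl\"onne's functor \cite{GK13} directly to the input $\chi_{\lambda,J}^\s$, tracing through the construction. Since $\chi_{\lambda,J}^\s$ is a one-dimensional $k$-vector space with explicit $\cH_\s$-action given by $\chi_{\lambda,J}^\s(\T_{n_i}^\s) \in \{0,-1\}$ and $\chi_{\lambda,J}^\s(\T_t^\s) = \lambda(t)$ for $t \in T_\s(k)$, the output $\mathbf{D}_{\lambda,J}$ will automatically be free of rank one over $k_{\mathcal{E}} = k((t))$; I would fix $\vec{e}$ to be a generator canonically associated to the base chamber $C^{(0)}$ of the chosen gallery. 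The $(\varphi^n, \Gamma_0)$ structure is then governed by how $\phi = n_{n-1}^{-1} n_{n-2}^{-1} \cdots n_0^{-1}$ (and certain torus elements corresponding to $\Gamma_0$) act through the Hecke module, together with the normalizations prescribed by Gro\ss e-Kl\"onne on the half-tree $Y_\s$.

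For the $\varphi^n$-computation, I would rewrite each inverse $\T_{n_i^{-1}}^\s$ using the quadratic relation of Theorem \ref{strthm}, so that the action of $\T_\phi^\s$ on $\vec{e}$ reduces to evaluating $\chi_{\lambda,J}^\s$ on an explicit product involving $\T_{n_i}^\s$ and $\tau_{n_i}^\s$. Using $\chi_{\lambda,J}^\s(\tau_{n_i}^\s) = \sum_{a \in k^\times}(\lambda\circ \alpha_{n_i}^\vee)(a)$, which equals $0$ when $n_i \notin S_\lambda$ and $-1$ when $n_i \in S_\lambda$ (equivalently, $k_{i+1} \in \{0, p-1\}$), combined with the defining relation $\lambda\circ\alpha_{n_i}^\vee(x^{-1}) = x^{k_{i+1}}$, one recovers the base-$p$ digits $p-1-k_{n-j}$ as the exponents of $t$ and the reciprocals of the factorials $k_j!$ as scalar contributions (the factorials come from Gro\ss e-Kl\"onne's identification of raising-operator coefficients arising from divided-power computations in his module construction, together with the identity $\sum_{a \in \fp^\times} a^k = -1$ for $(p-1) \mid k$, $k \neq 0$). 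The sign $(-1)^n$ arises from the $n$-fold iteration of the quadratic relation in Gro\ss e-Kl\"onne's normalization. For the $\Gamma_0$-action, I would use the identification of $Y_\s$ with the half-tree of $\textnormal{PGL}_2(\qp)$ to translate $\gamma \in \Gamma_0$ into a torus action on which $\chi_{\lambda,J}^\s$ acts via a character whose exponent is precisely $c = \frac{1}{p^n-1}\sum_{j=0}^{n-1}(p-1-k_{n-j})p^j$; the appearance of the power series $t/((1+t)^{\chi_{\textnormal{cyc}}(\gamma)}-1)$ to the $c$th power then comes from Gro\ss e-Kl\"onne's prescribed $\Gamma_0$-twist relating the multiplicative and additive coordinates on the formal group.

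The main obstacle will be two pieces of careful bookkeeping: first, reconciling the order reversal $\phi = n_{n-1}^{-1}\cdots n_0^{-1}$ with the indexing $\sum_{j=0}^{n-1}(p-1-k_{n-j})p^j$ (one must check that the base-$p$ digit of weight $p^j$ is indeed produced by the $n_{n-1-j}$ factor of $\phi$, as the gallery $C^{(0)}, C^{(1)}, \ldots, C^{(n)}$ crosses walls in reverse order from the product); second, matching Gro\ss e-Kl\"onne's sign and normalization conventions (choice of spanning vector, quadratic relation orientation, cyclotomic vs. arithmetic Frobenius) so that the explicit constants $(-1)^n$ and $\prod_j k_j!$ come out as stated. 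Once the formulas are established, the final assertion is immediate: distinct supersingular characters yield distinct tuples $(k_1,\ldots,k_n)$, and the exponent $\sum_j(p-1-k_{n-j})p^j$ together with the scalar $(-1)^n\prod k_j!^{-1}$ determines the tuple, so distinct $\chi_{\lambda,J}^\s$ give nonisomorphic $(\varphi^n,\Gamma_0)$-modules.
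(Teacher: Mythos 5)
The paper's own ``proof'' of Proposition~\ref{phingamma0} is a single sentence citing \cite{GK13} as a straightforward computation, so your proposal---tracing the $\varphi^n$-action through $\T_\phi^\s$ via the braid and quadratic relations, extracting the base-$p$ digits $p-1-k_{n-j}$ from the character values $\chi_{\lambda,J}^\s(\tau_{n_i}^\s)$ and $\chi_{\lambda,J}^\s(\T_{n_i}^\s)$, and reading off the $\Gamma_0$-action from the half-tree identification---is exactly the computation the paper is invoking, just spelled out in more detail. Your outline is consistent with the stated formulas; the only caveat is that the attributions of the scalar $(-1)^n\prod_j k_j!$ to divided-power normalizations are asserted rather than derived, but since the paper itself defers all of this to \cite{GK13}, you are on the same footing.
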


\begin{proof}
This is a straightforward computation using \cite{GK13}.  
\end{proof}

\begin{remark}
 The construction of $\mathbf{D}_{\lambda,J}$ depends on the choice of chamber gallery $C^{(0)}, C^{(1)}, C^{(2)}, \ldots$ and the element $\phi$.  
\end{remark}

We can push the construction of Proposition \ref{phingamma0} a bit further.  Given a one-dimensional $(\varphi^n,\Gamma_0)$-module $\mathbf{D}_{\lambda,J}$ as above, we construct an $n$-dimensional \'etale $(\varphi,\Gamma_0)$-module as follows.  Let $\widetilde{\mathbf{D}_{\lambda,J}}$ denote the $k_{\mathcal{E}}$ vector space spanned by $\{\vec{e}_0,\ldots, \vec{e}_{n - 1}\}$, with actions given by
\begin{eqnarray*}
 \varphi(\vec{e}_i) & = & \begin{cases}\vec{e}_{i + 1} & \textnormal{if}~ 0\leq i < n - 1,\\
 \displaystyle{(-1)^n\left(\prod_{j = 1}^{n}k_j!\right)^{-1}}t^{-\sum_{j = 0}^{n - 1}(p - 1 - k_{n - j})p^j}\vec{e}_0 & \textnormal{if}~ i = n - 1, \end{cases}\\
 \gamma(\vec{e}_i) & = & \left(\frac{t}{(1 + t)^{\chi_{\textnormal{cyc}}(\gamma)} - 1}\right)^{\frac{p^i}{p^n - 1}\sum_{j = 0}^{n - 1}(p - 1 - k_{n - j})p^j}\vec{e}_i,
\end{eqnarray*}
where $\gamma\in \Gamma_0$.  It is clear that $\widetilde{\mathbf{D}_{\lambda,J}}$ is isomorphic to $\widetilde{\mathbf{D}_{\lambda^{\omega^{-1}},\omega J\omega^{-1}}}$.  %Set $\vec{f}_0 = t^{p - 1 - k_1}\vec{e}_1$, and define $\vec{f}_i = \varphi^i(\vec{f}_0)$.  Then everything goes through as claimed.
Moreover, we have
$$\widetilde{\mathbf{D}_{\lambda,J}} \cong \bigoplus_{i = 0}^{n - 1}\mathbf{D}_{\lambda^{\omega^{-i}},\omega^{i}J\omega^{-i}}$$ 
as $(\varphi^n,\Gamma_0)$-modules, the isomorphism given by sending $k_{\mathcal{E}}.\vec{e}_i$ to $\mathbf{D}_{\lambda^{\omega^{-i}},\omega^{i}J\omega^{-i}}$.  The discussion of Section 2.2 of \cite{Be10} shows that we may (nonuniquely) extend the action of $\Gamma_0$ to $\Gamma$, so that we obtain a bona fide $(\varphi,\Gamma)$-module.  By abuse of notation, we shall also denote this module by $\widetilde{\mathbf{D}_{\lambda,J}}$.

We may now relate simple supersingular $\cH_\s$-modules and projective Galois representations more precisely.  Let $\chi_{\lambda,J}^\s$ be as before, and let $(k_1,\ldots k_{n - 1}, k_n)$ denote the associated $n$-tuple.  We define the rational number $r$ (depending on $\lambda$ and $J$) by
\begin{equation}\label{defofr}
r := \frac{1}{p - 1}\sum_{j = 0}^{n - 1}(p - 1 - k_{n - j})p^j. 
\end{equation}
\noindent By Theorems 8.5, 8.6(c) and 8.7 of \cite{GK13}, $r$ is in fact an integer, and $\chi_{\lambda,J}^\s$ is regular if and only if $r$ is primitive.  

Denote by
$$\mathbf{D}\longmapsto W(\mathbf{D})$$
Fontaine's equivalence of categories, from the category of \'etale $(\varphi,\Gamma)$-modules over $k_{\mathcal{E}}$ to the category of finite-dimensional representations of $\gal_{\qp}$ over $k$ (see \cite{Fon90} for more details).  Applying this functor to the $(\varphi,\Gamma)$-module $\widetilde{\mathbf{D}_{\lambda,J}}$ and using the computations contained in Section 2.2 of \cite{Be10}, we obtain
$$W(\widetilde{\mathbf{D}_{\lambda,J}}) = \textnormal{ind}_{\gal_{\mathbb{Q}_{p^n}}}^{\gal_{\qp}}(\omega_n^r)\otimes \mu_{1,\beta}\omega_1^s$$
for some $0\leq s < p - 1, \beta\in \fpb^\times$, and $r$ as in equation \eqref{defofr}.  Here $\mathbb{Q}_{p^n}$ denotes the unique unramified extension of $\qp$ of degree $n$ contained in a fixed algebraic closure $\ol{\mathbb{Q}}_p$.  Precomposing this with Gro\ss e-Kl\"onne's functor (and the lifting map $\mathbf{D}_{\lambda,J}\longmapsto\widetilde{\mathbf{D}_{\lambda,J}}$) and postcomposing with the projectivization functor, we get a map $\mathcal{W}$ from the set of simple supersingular $\cH_\s$-modules to the category of projective Galois representations.  Explicitly, it is given by
$$\mathcal{W}(\chi_{\lambda,J}^\s) = \llbracket W(\widetilde{\mathbf{D}_{\lambda,J}})\rrbracket \sim \llbracket\textnormal{ind}_{\gal_{\mathbb{Q}_{p^n}}}^{\gal_{\qp}}(\omega_n^r)\rrbracket.$$

There are several immediate consequences of this definition.  Firstly, we see that the isomorphism class of $\mathcal{W}(\chi_{\lambda,J}^\s)$ is independent of the choice of action of $\Gamma$ on the $(\varphi,\Gamma_0)$-module $\widetilde{\mathbf{D}_{\lambda,J}}$, so that the map $\mathcal{W}$ is well-defined.  Secondly, Proposition \ref{galreps} and Theorem \ref{tatesthm} show that $\chi_{\lambda,J}^\s$ is regular if and only if $\mathcal{W}(\chi_{\lambda,J}^\s)$ is irreducible.  Finally, it is clear that two simple supersingular $\cH_\s$-modules in the same orbit under $\omega^{\mathbb{Z}}$ will yield isomorphic projective Galois representations under $\mathcal{W}$, so we may view $\mathcal{W}$ as a map defined on supersingular $L$-packets of $\cH_\s$-modules.

\begin{propn}\label{orblifts}
 Let $\chi_{\lambda,J}^\s$ be a simple, regular supersingular $\cH_\s$-module, and let $d$ be a divisor of $n$.  Then the orbit of $\chi_{\lambda,J}^\s$ under $\omega^{\mathbb{Z}}$ has size $d$ if and only if $d_{\mathcal{W}(\chi_{\lambda,J}^\s)} = d$.
\end{propn}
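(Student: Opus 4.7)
The plan is to translate the statement into the combinatorics of the $n$-tuples $(k_1,\ldots,k_n)$ attached to supersingular characters of $\cH_\s$, and then read off the orbit size from the formula \eqref{defofr} for $r$. Concretely, I will show that applying $\omega$ to $\chi_{\lambda,J}^\s$ cyclically shifts the $k$-tuple by one slot, that this corresponds to multiplying $r$ by $p^{-1}$ modulo $[n]$, and that the assertion then follows directly from the definition of $d_\sigma$.

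First I would analyze the action of $\omega$ on the $k$-tuple. Recall that $\omega.\chi_{\lambda,J}^\s = \chi_{\lambda^\omega,\omega^{-1}J\omega}^\s$ by \eqref{omorbit}, and that $\omega^{-1}n_i\omega = n_{i+1}$ with indices modulo $n$. The latter relation implies $\omega\,\alpha_{n_i}^\vee(x)\,\omega^{-1} = \alpha_{n_{i-1}}^\vee(x)$, and hence
\[
\lambda^\omega\circ\alpha_{n_i}^\vee(x^{-1}) \;=\; \lambda\bigl(\omega\,\alpha_{n_i}^\vee(x^{-1})\,\omega^{-1}\bigr) \;=\; \lambda\circ\alpha_{n_{i-1}}^\vee(x^{-1}).
\]
Together with the corresponding (immediate) check that $n_i\in \omega^{-1}J\omega \iff n_{i-1}\in J$, this shows that the $n$-tuple $(k_1',\ldots,k_n')$ associated to $\omega.\chi_{\lambda,J}^\s$ is the cyclic shift $(k_n,k_1,k_2,\ldots,k_{n-1})$ of $(k_1,\ldots,k_n)$.

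Next I would compute how this shift affects $r$. Substituting the shifted tuple into \eqref{defofr} and isolating the term that wraps around, a direct manipulation yields $p\cdot r' \equiv r \pmod{p^n-1}$, which (after dividing by $p-1$) gives the congruence
\[
r' \;\equiv\; p^{-1}r \pmod{[n]}.
\]
By iteration, the integer associated to $\omega^a.\chi_{\lambda,J}^\s$ is congruent to $p^{-a}r$ modulo $[n]$ for every $a\geq 0$.

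To conclude, I would use regularity: by Theorems 8.5--8.7 of \cite{GK13} cited after \eqref{defofr}, $r$ is primitive and $0\le r<[n]$, so the base-$p$ expansion of $(p-1)r$ uniquely recovers $(k_1,\ldots,k_n)$, and thus recovers $\chi_{\lambda,J}^\s$. Therefore $\omega^a.\chi_{\lambda,J}^\s = \chi_{\lambda,J}^\s$ if and only if $p^a r \equiv r\pmod{[n]}$. The minimal such positive $a$ is simultaneously the orbit size of $\chi_{\lambda,J}^\s$ under $\omega^{\mathbb Z}$ and, by the very definition of $d_\sigma$ applied to the lift $\textnormal{ind}_{\gal_{\bbQ_{p^n}}}^{\gal_{\qp}}(\omega_n^r)$ of $\mathcal{W}(\chi_{\lambda,J}^\s)$, equal to $d_{\mathcal{W}(\chi_{\lambda,J}^\s)}$. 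The main obstacle I anticipate is purely bookkeeping: keeping the indexing of the $k$-tuple, the direction of the cyclic shift induced by $\omega$ versus $\omega^{-1}$, and the sign of the exponent of $p$ all consistent, so that the congruence $r'\equiv p^{-1}r\pmod{[n]}$ matches the condition $p^d r\equiv r\pmod{[n]}$ defining $d_\sigma$.
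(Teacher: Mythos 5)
Your proposal is essentially correct and follows the same underlying idea as the paper: translate the $\omega$-action on supersingular characters into the cyclic shift of the $n$-tuple $(k_1,\ldots,k_n)$, and then into a congruence on $r$ modulo $[n]$. Where you differ is in the bookkeeping: the paper characterizes directly when the tuple is $d$-periodic and collapses the resulting geometric sum to get $p^d r \equiv r \pmod{[n]}$ in one step, whereas you compute the effect of a \emph{single} shift on $r$ and iterate. Both routes reach the same endpoint; yours makes slightly more visible the picture that the $\omega$-orbit on characters mirrors the orbit of $r$ under multiplication by $p^{\pm 1}$ modulo $[n]$, which is exactly the data entering the definition of $d_\sigma$.

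Two small remarks. First, your intermediate congruence $p\cdot r' \equiv r \pmod{p^n-1}$ is not correct as stated: isolating the wrap-around term gives $pr' = r + (p-1-k_n)[n]$, so the correct intermediate statement is $pr' \equiv r \pmod{[n]}$ (equivalently $p(p-1)r' \equiv (p-1)r \pmod{p^n-1}$); the factor of $p-1$ must stay attached before you reduce the modulus. Your final conclusion $r' \equiv p^{-1}r \pmod{[n]}$ is nonetheless right. Second, your argument does genuinely rely on the injectivity of $\chi_{\lambda,J}^\s \longmapsto r$ on supersingular characters, since you deduce $\omega^a.\chi = \chi$ from $r^{(a)} = r$; this injectivity is fine (the coroots $\alpha_{n_i}^\vee$ generate $T_\s(k)$, so the tuple recovers $\lambda$ and hence $J$, and the $n$-digit base-$p$ expansion of $(p-1)r$ recovers the tuple), but it is worth making explicit because the paper's direct $d$-periodicity argument does not need it.
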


\begin{proof}
 Let $(k_1,\ldots, k_{n - 1}, k_n)$ denote the $n$-tuple of integers associated to $\chi_{\lambda,J}^\s$, $r$ the integer given by equation \eqref{defofr}, and $d$ a divisor of $n$.  The group $\omega^\mathbb{Z}$ acts on the set of $n$-tuples by 
$$\omega.(k_1,\ldots, k_{n - 1}, k_n) := (k_n, k_1,\ldots, k_{n - 1}).$$
One easily verifies the following equivalences:
\begin{eqnarray*}
\omega^d.\chi_{\lambda, J}^\s = \chi_{\lambda, J}^\s & \Longleftrightarrow & \omega^d.(k_1,\ldots, k_{n - 1}, k_n) = (k_1, \ldots, k_{n - 1}, k_n)\\ 
 & \Longleftrightarrow & r = \frac{1}{p - 1}\frac{p^n - 1}{p^d - 1}\sum_{j = 0}^{d - 1}(p - 1 - k_{n - j})p^j\\
 & \Longleftrightarrow & p^dr =  r + [n]\sum_{j = 0}^{d - 1}(p - 1 - k_{n - j})p^j\\
 & \Longleftrightarrow & p^dr \equiv r~(\textnormal{mod}~[n]).
\end{eqnarray*}
By the definition of $d_\sigma$, we are done.  
\end{proof}

\begin{cor}\label{Wbij}
 The map $\mathcal{W}$ realizes the numerical bijection of Corollary \ref{finalcor}.  More precisely, $\mathcal{W}$ induces a bijection between regular supersingular $L$-packets of $\cH_\s$-modules of size $d$ and irreducible projective representations $\sigma$ of $\gal_{\qp}$ of dimension $n$ for which $d_\sigma = d$.  
 \end{cor}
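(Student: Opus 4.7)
The plan is to combine Proposition \ref{orblifts} with the cardinality count from Corollary \ref{finalcor}. By Proposition \ref{orblifts}, the map $\mathcal{W}$ restricts to a map from regular supersingular $L$-packets of $\cH_\s$-modules of size $d$ to isomorphism classes of irreducible projective representations $\sigma$ of $\gal_{\qp}$ of dimension $n$ with $d_\sigma = d$. By Corollaries \ref{numorbits} and \ref{numprojreps}, both sides have the common finite cardinality $h(d)$, so it suffices to show injectivity (equivalently, surjectivity) of $\mathcal{W}$ on this graded piece, for each divisor $d$ of $n$.

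I would establish injectivity. Let $\chi_{\lambda_1,J_1}^\s$ and $\chi_{\lambda_2,J_2}^\s$ be regular supersingular characters with $\mathcal{W}(\chi_{\lambda_1,J_1}^\s) \sim \mathcal{W}(\chi_{\lambda_2,J_2}^\s)$, and let $r_i \in [0,[n])$ denote the primitive integer associated to $\chi_{\lambda_i,J_i}^\s$ via formula \eqref{defofr}. Unwinding projective equivalence as in the paragraph preceding Proposition \ref{rprim} (so that projectively equivalent Galois representations differ by a twist by a character $\mu_{1,\beta}\omega_1^s$), and then applying Proposition \ref{galreps}(2), the hypothesis $\mathcal{W}(\chi_{\lambda_1,J_1}^\s) \sim \mathcal{W}(\chi_{\lambda_2,J_2}^\s)$ becomes $r_1 \equiv p^a r_2 \pmod{[n]}$ for some $a \in \bbZ$.

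To convert this back to a relation on characters, I would check that the $\omega^\bbZ$-action on characters intertwines with multiplication by $p^{-1}$ modulo $[n]$ on the associated integers. Indeed, under the tuple description in the proof of Proposition \ref{orblifts}, $\omega$ acts on $(k_1,\ldots,k_n)$ by the cyclic shift $(k_1,\ldots,k_n) \mapsto (k_n,k_1,\ldots,k_{n-1})$; since the base-$p$ digits of $(p-1)r$ are $(p-1-k_n, p-1-k_{n-1}, \ldots, p-1-k_1)$, this shift corresponds to a cyclic shift of those digits, which is exactly multiplication by $p^{-1}$ modulo $[n]$ on $r$. (This is already implicit in the chain of equivalences $\omega^d.\chi = \chi \Leftrightarrow p^d r \equiv r\pmod{[n]}$ from the proof of Proposition \ref{orblifts}.) Consequently $r_1 \equiv p^a r_2 \pmod{[n]}$ forces $\chi_{\lambda_1,J_1}^\s$ to lie in the $\omega^\bbZ$-orbit of $\chi_{\lambda_2,J_2}^\s$, so they determine the same $L$-packet. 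The main technical step is this identification of the $\omega$-action with multiplication by $p^{-1}$, but it is a routine bookkeeping exercise with base-$p$ expansions.
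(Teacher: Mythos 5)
Your proof is correct, and it takes the opposite tack from the paper. The paper proves surjectivity of $\mathcal{W}$ by appealing to Theorem 8.7 of \cite{GK13}, which produces, for each irreducible projective Galois representation $\llbracket\textnormal{ind}_{\gal_{\mathbb{Q}_{p^n}}}^{\gal_{\qp}}(\omega_n^r)\rrbracket$, an explicit regular supersingular character mapping to it; injectivity is then read off from the cardinality match of Corollary~\ref{finalcor}. You instead prove injectivity directly. The key computation you isolate --- that the $\omega^{\bbZ}$-action on tuples $(k_1,\ldots,k_n)$ translates into multiplication by $p^{-1}$ modulo $[n]$ on the associated integer $r$, because cyclically shifting the base-$p$ digits of $(p-1)r$ is precisely multiplication by $p^{\pm 1}$ modulo $p^n-1$ --- is a clean sharpening of the equivalences already used in the proof of Proposition~\ref{orblifts}. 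Combined with the injectivity of the map from supersingular characters to $r$-values (which the paper notes at the end of Proposition~\ref{phingamma0}) and the classification of lifts from Proposition~\ref{galreps}(2), this correctly forces $\chi_{\lambda_1,J_1}^\s$ and $\chi_{\lambda_2,J_2}^\s$ into the same $\omega^{\bbZ}$-orbit, and surjectivity then follows from the count. Your route has the advantage of being more self-contained --- it does not invoke the inverse construction of \cite{GK13} Theorem~8.7 --- at the cost of a little extra digit bookkeeping; the paper's route is shorter but outsources more work to the external reference. Both are legitimate.
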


\begin{proof}
 Given a irreducible projective representation $\llbracket \textnormal{ind}_{\gal_{\mathbb{Q}_{p^n}}}^{\gal_{\qp}}(\omega_n^r)\rrbracket$, Theorem 8.7 of \cite{GK13} shows how to construct a simple, regular supersingular $\cH_\s$-module $\chi_{\lambda,J}^\s$ such that $\mathcal{W}(\chi_{\lambda,J}^\s) \sim \llbracket \textnormal{ind}_{\gal_{\mathbb{Q}_{p^n}}}^{\gal_{\qp}}(\omega_n^r)\rrbracket$.  Hence, the map $\mathcal{W}$ from regular supersingular $L$-packets of size $d$ to irreducible projective Galois representations with $d_\sigma = d$ is surjective.  By Corollary \ref{finalcor}, these two sets have the same size, and $\mathcal{W}$ must be injective as well.
\end{proof}

\begin{cor}\label{compat}
 Let $\MM$ be an $\cH$-module.  We let $\MM\longmapsto\mathcal{GK}(\MM)$ denote the functor from the category of finite-length $\cH$-modules over $k$ to the category of continuous $\gal_{\qp}$-representations over $k$ constructed in Section 8 of \cite{GK13}, and let $\MM\longmapsto\textnormal{JH}(\MM|_{\cH_{\s}})$ denote the functor obtained by taking the Jordan-H\"older constituents of the $\cH_\s$-module $\MM|_{\cH_\s}$ (without multiplicity).  The following diagram of sets is commutative (where we consider all objects up to isomorphism and over $k$), and the horizontal arrows are bijections:

\centerline{
\xymatrix{
\left\{\txt{ \textnormal{absolutely simple,}\\ \textnormal{supersingular}\\  $n$\textnormal{-dimensional}~ $\cH$\textnormal{-modules}}\right\} \ar[r]^<<<<<{\mathcal{GK}(-)}\ar[d]_{\textnormal{JH}(-|_{\cH_\s})} & \left\{\txt{  \textnormal{absolutely irreducible}\\  $n$\textnormal{-dimensional}\\ $\gal_{\qp}$\textnormal{-representations}}\right\} \ar[d]^{\llbracket - \rrbracket}\\
\left\{\txt{ \textnormal{regular, supersingular}\\$L$\textnormal{-packets of}\\  $\cH_\s$\textnormal{-modules}}\right\} \ar[r]_<<<<<<{\mathcal{W}(-)} & \left\{\txt{  \textnormal{absolutely irreducible}\\  $n$\textnormal{-dimensional projective}\\ $\gal_{\qp}$\textnormal{-representations}}\right\}
}
}
\end{cor}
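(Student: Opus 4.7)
The plan is to verify, in order: that the top horizontal arrow is a bijection, that the left vertical assignment is well-defined (lands in regular supersingular $L$-packets), and that the square commutes; the bottom row is already established as Corollary \ref{Wbij}. The top bijection $\mathcal{GK}$ is obtained by composing Gro\ss e-Kl\"onne's functor with Fontaine's equivalence, and is known by \cite{GK13}, Section 8 to induce a bijection between absolutely simple supersingular $n$-dimensional $\cH$-modules and absolutely irreducible $n$-dimensional mod-$p$ representations of $\gal_{\qp}$.

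For the left vertical arrow, I would argue as follows. If $\MM$ is absolutely simple supersingular of dimension $n$, then, as recalled just after Definition \ref{regular}, $\MM|_{\cH_{\aff}}$ is a direct sum of $n$ distinct characters, cyclically permuted by $\omega$. Restricting these characters to $\cH_\s$ yields a single $\omega^{\mathbb{Z}}$-orbit of supersingular characters of $\cH_\s$, and each is regular essentially by definition: the existence of $\MM$ itself supplies the required lift to $\cH_{\aff}$ with $\omega^\mathbb{Z}$-orbit of size $n$. Hence the assignment $\MM\longmapsto \textnormal{JH}(\MM|_{\cH_\s})$ indeed takes values in the set of regular supersingular $L$-packets.

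The main step is commutativity. Fix an absolutely simple supersingular $\cH$-module $\MM$ of dimension $n$, and let $\chi_{\widetilde{\lambda},J}$ be an $\cH_{\aff}$-character occurring in $\MM|_{\cH_{\aff}}$, where $\widetilde{\lambda}\colon T(k)\longrightarrow \fpb^\times$ extends $\lambda = \widetilde{\lambda}|_{T_\s(k)}$. I would compute $\mathcal{GK}(\MM)$ directly using Section 8 of \cite{GK13}, with respect to the same chamber gallery $\{C^{(i)}\}$ and the same element $\phi$ used to construct $\widetilde{\mathbf{D}_{\lambda,J}}$ in Subsection \ref{gk}. Because $\widetilde{\mathbf{D}_{\lambda,J}}$ was defined to reproduce exactly the $\cH_\s$-level structure yielded by that construction, one expects $\mathcal{GK}(\MM)$ to coincide with $\widetilde{\mathbf{D}_{\lambda,J}}$ up to a twist by a one-dimensional character $\eta\colon \gal_{\qp}\longrightarrow \fpb^\times$ encoding the part of $\widetilde{\lambda}$ on the central diagonal subgroup of $T(k)$ not seen by $\lambda$. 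Applying Fontaine's equivalence then yields $\mathcal{GK}(\MM)\cong W(\widetilde{\mathbf{D}_{\lambda,J}})\otimes\eta$, and projectivizing kills the twist:
\[
\llbracket\mathcal{GK}(\MM)\rrbracket\sim \llbracket W(\widetilde{\mathbf{D}_{\lambda,J}})\rrbracket = \mathcal{W}(\chi_{\lambda,J}^\s).
\]

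The main obstacle will be making this character-twist comparison precise. Specifically, one must carefully track the action of the Hecke operators $\T_z$ for diagonal central $z$ on the basis $\{\vec{e}_0,\ldots,\vec{e}_{n-1}\}$ appearing in Gro\ss e-Kl\"onne's construction, and verify that their contribution is exactly a scalar twist rather than a deeper modification of the $\varphi$- or $\Gamma$-actions. Once this is isolated as a one-dimensional character of $\gal_{\qp}$, the desired commutativity follows immediately upon projectivization. Bijectivity of the horizontal arrows is already in hand (Gro\ss e-Kl\"onne and Corollary \ref{Wbij}), so the finished commutative square also recovers the equality of counts from Corollary \ref{finalcor} via the vertical arrows.
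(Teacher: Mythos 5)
Your proposal is correct and is essentially the paper's argument: the paper's proof simply cites the explicit computation of Theorem~8.5 in \cite{GK13} together with the remarks before Proposition~\ref{orblifts}, which amount to exactly the comparison you sketch. The ``main obstacle'' you flag --- isolating the discrepancy between $\mathcal{GK}(\MM)$ and $W(\widetilde{\mathbf{D}_{\lambda,J}})$ as a one-dimensional twist --- is in fact automatic from the explicit formulas: the tuple $(k_1,\ldots,k_n)$ in Theorem~8.5 of \cite{GK13}, and hence the integer $r$ of equation~\eqref{defofr}, is computed via $\widetilde{\lambda}\circ\alpha_{n_i}^\vee$, and since each coroot $\alpha_{n_i}^\vee$ is valued in $\bT_\s$, only $\lambda = \widetilde{\lambda}|_{T_\s(k)}$ is seen. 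Thus the exponent of $\omega_n$ in $\mathcal{GK}(\MM)$ and in $W(\widetilde{\mathbf{D}_{\lambda,J}})$ agree on the nose, and the residual unramified-and-$\omega_1$ twist (recorded as $\mu_{1,\beta}\omega_1^s$ in the text before Proposition~\ref{orblifts}) is exactly what projectivization annihilates --- so no further bookkeeping of the central Hecke operators $\T_z$ is needed.
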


\begin{proof}
 This follows from the explicit calculation of Theorem 8.5 in \cite{GK13}, and the comments preceding Proposition \ref{orblifts}.  
\end{proof}

\end{document}